\newcommand{\arxiv}[1]{\href{http://www.arXiv.org/abs/#1}{arXiv:#1}}
\DeclareMathAlphabet{\mathbbold}{U}{bbold}{m}{n}
\newcommand{\zero}{\mathbbold{0}}
\newcommand{\unit}{\mathbbold{1}}
\newcommand{\evalch}[1]{\eval{\ch}{}^{\min}_{#1}}
\newcommand{\proper}{^\dag}
\newcommand{\Psigmamin}{P_{\min}^\sigma}%
\newcommand{\Ptaumin}{P_{\min}^\tau}%
\newcommand{\Psigmainvmin}{P_{\min}^{\sigma^{-1}}}%
\newtheorem{theorem}{Theorem}[section]
\newtheorem{proposition}[theorem]{Proposition}
\newtheorem{lemma}[theorem]{Lemma}
\newtheorem{corollary}[theorem]{Corollary}
\theoremstyle{definition}
\newtheorem{definition}[theorem]{Definition}
\theoremstyle{remark}
\newtheorem{remark}[theorem]{Remark}
\newtheorem{example}[theorem]{Example}
\newcommand{\iY}{\mathsf{Y}}
\newcommand{\ptyP}{\mathscr{P}}
\def\d+{\dot{+}}
\def\card#1{\# #1}
\newcommand{\epi}{\operatorname{epi}}
\newcommand{\eps}{\epsilon}
\newcommand{\sC}{\mathcal{C}}
\newcommand{\corn}[1]{\mathsf{R} (#1)}
\newcommand{\cont}{\sC}
\newcommand{\dex}[1]{\mathsf{e}(#1)}
\newcommand{\dexo}{\mathsf{e}}
\newcommand{\Sym}{\mathfrak{S}}
\newcommand{\R}{\mathbb{R}}
\newcommand{\Rbar}{\overline{\R}}
\newcommand{\N}{\mathbb{N}}
\newcommand{\Z}{\mathbb{Z}}
\newcommand{\C}{\mathbb{C}}
\newcommand{\K}{\mathbb{K}}
\newcommand{\Ring}{\mathcal{R}}
\newcommand{\sA}{\mathcal{A}}
\newcommand{\sB}{\mathcal{B}}
\newcommand{\sF}{\mathcal{F}}
\newcommand{\sP}{\mathcal{P}}
\newcommand{\sL}{\mathcal{L}}
\newcommand{\sQ}{\mathcal{Q}}
\newcommand{\sM}{\mathcal{M}}
\newcommand{\sY}{\mathcal{Y}}
\newcommand{\sZ}{\mathcal{Z}}
\newcommand{\weakm}{\prec^{\rm w}}
\newcommand{\mrm}[1]{\text{\rm #1}}
\newcommand{\sat}{{\mrm{Sat}}}
\newcommand{\val}{\operatorname{val}}
\newcommand{\sgn}{\operatorname{sgn}}
\newcommand{\rmin}{\R_{\min}}
\newcommand{\rmax}{\R_{\max}}
\newcommand{\rhomin}{\rho_{\min}}
\newcommand{\eval}[1]{\widehat{#1}}
\newcommand{\divex}[1]{\overline{#1}}
\newcommand{\polfun}{\rmin\{\iY\}}
\newcommand{\vex}{\operatorname{vex}}
\newcommand{\perm}{\operatorname{per}}
\newcommand{\tr}{\operatorname{tr}}
\newcommand{\trm}{\tr^{\min}}
\newcommand{\ch}{\operatorname{ch}}
\newcommand{\diag}[1]{\operatorname{\mathsf{d}}(#1)}
\newcommand{\diagp}[1]{\operatorname{\mathsf{d}_\eps}(#1)}
\newcommand{\bydef}{\stackrel{\text{\rm def}}{=}}
\newcommand{\new}[1]{{\em #1}\index{#1}}
\newcommand{\set}[2]{\{#1\mid\,#2\}}
\newcommand{\opt}{\mrm{Opt}}
\newcommand{\idp}{\mrm{id}}
\newcommand{\id}{I}
\newcommand{\G}{\mathcal{G}}
\newcommand{\Perm}{\mathfrak{S}}
\title{Non-archimedean valuations of eigenvalues of matrix polynomials}
\author{Marianne Akian}
\address{Marianne Akian,
INRIA and CMAP, \'Ecole 
Polytechnique. Address:
CMAP, \'Ecole Polytechnique,
Route de Saclay,
91128 Palaiseau Cedex, France}
\email{Marianne.Akian@inria.fr}
\author{Ravindra Bapat}
\address{Ravindra Bapat, Indian Statistical Institute, New Delhi, 110016,
India} 
\email{rbb@isid1.isid.ac.in}
\author{St\'ephane Gaubert}
\address{St\'ephane Gaubert, 
INRIA and CMAP, \'Ecole 
Polytechnique. Address:
CMAP, \'Ecole Polytechnique,
Route de Saclay,
91128 Palaiseau Cedex, France}
\email{
Stephane.Gaubert@inria.fr}
\keywords{
Perturbation theory, max-plus algebra, tropical semifield,
spectral theory, Newton-Puiseux theorem, amoeba, majorization, graphs, optimal assignment.
}
\subjclass[2000]{47A55, 47A75, 05C50, 12K10}
\begin{document}
\maketitle

\begin{abstract}
We establish general weak majorization inequalities,
relating the leading exponents of the eigenvalues of matrices
or matrix polynomials over the field of Puiseux series
with the tropical analogues of eigenvalues.
We also show that these inequalities become equalities
under genericity conditions, and that the leading coefficients
of the eigenvalues are determined as the eigenvalues of auxiliary matrix
polynomials. 
\end{abstract} 

\sloppy

\section{Introduction}

\subsection{Non-archimedean valuations and tropical geometry}
A non-archimedean valuation $\nu$ on a field $\K$
is a map $\K\to \R\cup\{+\infty\}$ such that
\begin{subequations}\begin{gather}
\nu(a) = +\infty  \iff a=0\\
\nu(a+b)\geq \min(\nu(a),\nu(b))\\
\nu(ab)=\nu(a)+\nu(b) \enspace .
\end{gather}
\label{e-intro}
\end{subequations}
These properties imply that $\nu(a+b)=\min(\nu(a),\nu(b))$ 
for $a,b\in\K$ such that $\nu(a)\neq \nu(b)$. Therefore, the map $\nu$ 
is almost a morphism from $\K$ to the
{\em min-plus} or {\em tropical} semifield $\rmin$, which is
the set $\R\cup\{+\infty\}$, equipped
with the addition $(a,b)\mapsto \min(a,b)$
and the multiplication $(a,b)\mapsto a+b$.
A basic example of field with a non-archimedean valuation 
is the field of complex Puiseux series, with the valuation which
takes the leading (smallest) exponent of a series.
The images by a non-archimedean valuation of algebraic subsets of $\mathbb{K}^n$ are known as {\em non-archimedean amoebas}. The latter have
a combinatorial structure which is studied in tropical 
geometry~\cite{itenberg,sturmfelsmclagan}.
For instance, Kapranov's theorem
shows that the closure of the image by a non-archimedean valuation of an algebraic hypersurface over an algebraically closed field is a tropical hypersurface, i.e., the non-differentiability locus of a convex polyhedral function, see~{\cite{kapranov}}.
This generalizes 
the characterization of the leading exponents of the different branches
of an algebraic curve in terms of the slopes of the Newton polygon, 
which is part of the classical Newton-Puiseux theorem.

\subsection{Main results}
In the present paper, we consider the eigenproblem over the field of complex Puiseux series and related fields of functions.
Our aim is to relate the images of the eigenvalues by the non-archimedean
valuation with certain easily computable combinatorial objects called {\em tropical eigenvalues}.

The first main result of the present paper, Theorem~\ref{th-major-eig},
shows that the sequence of valuations of the eigenvalues
of a matrix
$\sA\in \K^{n\times n}$ 
is weakly
(super) majorized by the sequence of (algebraic) tropical
eigenvalues of the matrix obtained by applying the valuation to the
entries of $\sA$.
Next, we show
that the same majorization inequality holds under more general circumstances.
In particular, we consider in Theorem~\ref{th-major-eig-pre} 
and Corollary~\ref{cor-major-eig-pre} a relaxed definition
of the valuation, in the spirit of large deviations theory,
assuming that the entries of the matrix are functions
of a small parameter $\epsilon$. 
We do not require these functions to have a Puiseux
series type expansion, but assume that they have some mild form of first order
asymptotics. Moreover, the results apply to
a lower bound of the valuation of the entries of  $\sA$.

Then, in Section~\ref{sec-gen},
we assume that the entries of $\sA$ satisfy
\[ \sA_{ij}= a_{ij} \eps^{A_{ij}}+o(\eps^{A_{ij}})\enspace,
\]
for some scalars $a_{ij}\in \C$ and $A_{ij}\in \R\cup\{+\infty\}$,
as $\eps$ tends to $0$.
When $a_{ij}=0$, this reduces to $\sA_{ij}= o(\eps^{A_{ij}})$,
so that valuations are partially known: only a lower bound is known.
Applying Corollary~\ref{cor-major-eig-pre}, majorization inqualities 
are derived in Theorem~\ref{th-major-cont}.
The assumption of Section~\ref{sec-gen} is satisfied of course 
if the entries of $\sA$ are absolutely converging Puiseux series,
or more generally, if these entries belong to a polynomially bounded 
o-minimal structure~\cite{vandendries,alessandrini2013}.
We show in Theorem~\ref{th-gen=}
that the majorization inequalities of Theorem~\ref{th-major-cont}
become equalities for generic values of the entries $a_{ij}$.
The proof of the latter theorem relies
on some variations of the Newton-Puiseux
theorem, which we state as Theorems~\ref{th3-local} and~\ref{th3}.
The latter results only require a partial information on the asymptotics 
of the coefficients of the polynomial.
The particular case where this partial information 
contains at least the first order asymptotics of all the 
coefficients of the polynomial was considered in~\cite{dieu}.
However, here we show that a partial information on the first order asymptotics,
giving an outer approximation of a Newton polytope,
allow one to derive a partial information on the roots.
The latter idea goes back at least to~\cite{montel} in the
context of archimedean valuations. 

The valuation only gives an information on the {\em leading exponent}
of Puiseux series.
Our aim in Section~\ref{sec-pencils} is to refine this information,
by characterizing also the coefficients $\lambda_i\in \C$ of the leading monomials 
of the asymptotic expansions of the different eigenvalues
$\sL^i$ of the matrix $\sA$,
 \[ \sL^i\sim \lambda_i \eps^{\Lambda_i}, \qquad 1\leq i\leq n  \enspace .
 \]
As a byproduct, we shall end up with an explicit form, easily checkable,
of the genericity conditions under which the majorization inequalities
become equalities.
To this end, it is necessary to embed the
standard eigenproblem in the wider class of matrix
polynomial eigenproblems. 
Theorems~\ref{th-1} and~\ref{th-1-gen} show in particular that the 
coefficients $\lambda_i$ are the eigenvalues of certain auxiliary
matrix polynomials which are determined only by the leading exponents
and leading coefficients of the entries of $\sA$. These
polynomials are constructed from the optimal
dual variables of an optimal assignment problem,
arising from the evaluation of the tropical
analogue of the characteristic polynomial.

\subsection{Application to perturbation theory and discussion of related work}
The present results apply to perturbation theory~\cite{kato,baumgartel},
and specially, to the singular case
in which a matrix with multiple eigenvalues
is perturbed. The latter situation is the object of the theory developed 
by Vi\v sik and Ljusternik~\cite{vishik}
and completed by Lidski\u\i~\cite{lidskii},
see~\cite{mbo97} for a survey.
The goal
of this theory is to give a direct characterization
of the exponents, without computing
the Newton polytope of the characteristic polynomial.
The theorem of~\cite{lidskii} solves this problem under
some genericity assumptions, requiring the non-vanishing
of certain Schur complements. The question of solving
degenerate instance of Lidsk\u\i's theorem has been
considered in particular, by Ma and Edelman~\cite{maedelman} and Najman~\cite{najman}, and also by Moro, Burke and Overton in~\cite{mbo97}. Theorems~\ref{th-1} and~\ref{th-1-gen} 
generalize the theorem of Lidski\u\i, as they allow one
to solve degenerate instances in which the Schur complements
needed in Lidski\u\i's construction are no longer defined.

The present train of thoughts originates from a work of Friedland~\cite{friedland}, 
who showed that a certain deformation of the Perron root of a nonnegative matrix, in terms of Hadamard powers, converges to the maximal circuit mean of the matrix, a.k.a., the maximal tropical eigenvalue. 
Then, in~\cite{ABG96}, we showed that the limiting Perron eigenvector,
along the same deformation, can also be characterized by tropical means,
under a nondegeneracy condition. An early version of the present Theorems~\ref{th-major-cont} and~\ref{th-gen=} appeared as Theorem~3.8 of the authors' preprint~\cite{abg04b}. There, we also gave a generalization of the theorem of Lidski\u\i, in which the exponents of the first order asymptotics of the eigenvalues are given by the tropical eigenvalues of certain tropical Schur complements and their coefficients are given by some associated usual Schur complements like in the true Lidski\u\i\ theorem. 
However, some singular cases remained, 
see for instance Example~\ref{example-lidski},
motivating the introduction of matrix polynomial eigenproblems in further works.
The results of Theorems~\ref{th-1} and~\ref{th-1-gen} were announced without proof in the note~\cite{abg04}. 
Therefore, the present article is, for some part, a survey of results which have not appeared previously in the form of a journal article. It also provides a 
general presentation of eigenvalues in terms of valuation theory,
with several new results or refinements, like the general
majorization inequality for the eigenvalues of matrix polynomials,
Theorem~\ref{th-major-pencil}. 
The interest of this presentation is that it explains better the relation between the results obtained here, or in eigenvalues perturbation theory, for the non-archimedean valuations of matrix entries and eigenvalues, with their analogues for archimedean valuations, like the modulus map, or for
some generalization of the notion of  archimedean valuation which includes
in particular matrix norms.

Indeed, the latter works have motivated
a more recent work by Akian, Gaubert and Marchesini, who showed in~\cite{AGM2014a}, that one form of the theorem of Friedland concerning the Perron or dominant root carries over to all eigenvalues:  the sequence of moduli of all eigenvalues of a matrix is weakly log-majorized by the sequence of tropical eigenvalues, up to certain combinatorial coefficients. Therefore, the result there
can be thought of as a analogue of Theorem~\ref{th-major-eig} or~\ref{th-major-cont}
for the modulus archimedean valuation.
Also, the results of~\cite{abg04b,abg04} have
been at the origin of the application of tropical methods
by Gaubert and Sharify to the numerical computation and estimation
of eigenvalues~\cite{posta09,sharifyphd}, based on the norms of
matrix polynomial coefficients.
This is a subject of current interest,
with work by Akian, Bini, Gaubert, Hammarling, Noferini, Sharify, and Tisseur~\cite{binisharify,hamarling,SharifyTisseur,logmajorization2013}.

The present results provide a further illustration
of the role of tropical algebra in asymptotic analysis,
which was recognized by Maslov~\cite[Ch.~VIII]{maslov73}.
He observed that WKB-type or large deviation type asymptotics lead
to limiting equations, like Hamilton-Jacobi equations,
of a tropical nature.
This observation is at the origin
of idempotent analysis~\cite{maslov92,maslov92a,maslovkolokoltsov95,litvinov00}.
The same deformation has been identified by Viro~\cite{viro},
in relation with the patchworking method he developed
for real algebraic curves. 

Note that all the perturbation results described or recalled
above study sufficient conditions for the possible computation
of first order asymptotics of some roots when an information 
on the first order asymptotics of the data is only available.
However, when all the Puiseux series expansion of the data is known,
Puiseux theorem allows one to compute without any condition
all the Puiseux series expansion.
In the context of matrix polynomials, 
Murota~\cite{murota}, gave an algorithm to compute the Puiseux series 
expansions of the eigenvalues of a matrix polynomial depending polynomially
in the parameter $\eps$, avoiding the explicit computation of the
characteristic polynomial.
As for Theorems~\ref{th-1} and~\ref{th-1-gen}, 
his algorithm relies on a parametric optimal assignment problem.

The present work builds on tropical spectral theory. It has been inspired by the analogy with nonnegative matrix theory, of which Hans Schneider was a master. We gratefully acknowledge our debt to him.

\section{Min-plus polynomials and Newton polygons} \label{sec-minpol}
We first recall some elementary facts
concerning formal polynomials and polynomial functions
over the min-plus semifield, and their relation
with Newton polygons. 

The \new{min-plus semifield}, $\rmin$, is the set
$\R\cup\{+\infty\}$ equipped with the addition
$(a,b)\mapsto \min(a,b)$, denoted  $a\oplus b$, and the multiplication
$(a,b)\mapsto a+b$, denoted  $a\otimes b$ or $ab$.
We shall denote by $\zero=+\infty$
and $\unit=0$ the zero and unit elements of $\rmin$, respectively.
The familiar algebraic constructions and conventions carry
out to the min-plus context with obvious changes. For instance, if $A,B$ are
matrices of compatible dimensions
with entries in $\rmin$, we shall denote by $AB$ the matrix
product with entries $(AB)_{ij}
=\bigoplus_{k} A_{ik}B_{kj}
=\min_{k} (A_{ik}+B_{kj})$, we denote by $A^k$ the $k$th min-plus matrix power of $A$, etc.
Moreover, if $x\in \rmin\setminus\{\zero\}$, then
we will denote by $x^{-1}$ the inverse of $x$ for the $\otimes$ law, 
which is nothing but $-x$, with the conventional notation.
The reader
seeking information on the min-plus semifield
may consult~\cite{cuning,maslov92,bcoq,maslovkolokoltsov95,abg05,butkovicbook}.

We denote by $\rmin[\iY]$ the semiring of formal polynomials with 
coefficients in $\rmin$ in the indeterminate $\iY$: a
\new{formal polynomial} $P\in \rmin[\iY]$ is nothing but
a sequence $(P_k)_{k\in\N}\in \rmin^\N$ such that $P_k=\zero$
for all but finitely many values of $k$. Formal
polynomials are equipped with the entry-wise
sum, $(P\oplus Q)_k=P_k\oplus Q_k$, 
and the Cauchy product, $(P Q)_k=\bigoplus_{0\leq i\leq k}P_i Q_{k-i}$.
As usual, we denote a formal polynomial
$P$ as a formal sum, $P=\bigoplus_{k=0}^{\infty} P_k \iY^k$.
We also define the \new{degree} and \new{valuation} of $P$:
$\deg P=\sup\set{k\in \N}{P_k\neq \zero}$,
$\val P=\inf\set{k\in \N}{P_k\neq \zero}$
($\deg P=-\infty$ and $\val P=+\infty$ if $P=\zero$).
To any $P\in \rmin[\iY]$, 
we associate the \new{polynomial
function} $\eval{P}:\rmin\to\rmin,\; y\mapsto \eval{P}(y)=
\bigoplus_{k=0}^{\infty} P_k y^k$, that is, with the
usual notation:
\begin{align}
\eval{P}(y)=\min_{k\in \N} (P_k + k y) \enspace .
\label{e-minpoly}
\end{align}
Thus, $\eval{P}$ is concave, piecewise affine 
with integer slopes.
We denote by $\polfun$
the semiring of polynomial functions $\eval{P}$.
The morphism $\rmin[\iY]\to\polfun,\; P\mapsto \eval{P}$ is
not injective, as it is 
essentially a specialization
of the classical Fenchel transform over 
$\R$, which reads:
\[ \sF : \Rbar^{\R} \to \Rbar^{\R},\; \sF (g)(y)=\sup_{x\in\R} (xy-g(x))
\enspace,
\]
where $\Rbar:= \R\cup\{\pm\infty\}$.
Indeed, for all $y\in\R$, 
$\eval{P}(y)= -\sF(P)(-y)$, where the function
$k\mapsto P_k$ from $\N$ to $\rmin$ is extended to a function 
\begin{align}
\label{pasafunction}
P:\R\to \rmin,\, x\mapsto P(x),
\mrm{ with } P(x) = \begin{cases}
P_k &\mrm{if }x=k\in \N\enspace ,\\
+\infty &\mrm{otherwise}
\end{cases}
\end{align}

The following result
of Cuninghame-Green and Meijer gives
a min-plus analogue of the fundamental theorem of algebra.
\begin{theorem}[{\cite{cuning80}}]\label{th22}
Any polynomial function $\eval{P}\in \polfun$
can be factored in a unique way as
\begin{equation}
\label{th22.1} \eval{P}(y) = P_n (y\oplus c_1) \cdots 
(y\oplus c_n)\enspace ,
\end{equation}
with $c_1 \leq \cdots \leq c_n$.  
\end{theorem}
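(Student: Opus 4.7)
The plan is to exploit the fact that $\eval{P}(y)=\min_k(P_k+ky)$ is, up to sign, the Fenchel conjugate of the sequence $(P_k)$, so that its piecewise affine structure is read off from the Newton polygon of $P$, i.e.\ the lower convex hull of the points $(k,P_k)$ with $P_k<+\infty$. The $c_i$'s in~\eqref{th22.1} should be the negatives of the slopes of this polygon, with appropriate multiplicities, padded by copies of $+\infty$ corresponding to $\val P$.

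For existence, let the vertices of the lower convex hull be $(k_0,P_{k_0}),\ldots,(k_r,P_{k_r})$ with $k_0=\val P$, $k_r=n$ and successive slopes $s_1<\cdots<s_r$. Define the multiset of $c_i$'s by taking $k_j-k_{j-1}$ copies of $-s_j$ for each $j=1,\ldots,r$, together with $k_0$ copies of $+\infty$; this gives exactly $n$ values, which one sorts as $c_1\leq\cdots\leq c_n$. The right-hand side of~\eqref{th22.1} then evaluates to $P_n+\sum_{i=1}^n\min(y,c_i)$, a concave piecewise affine function. On any interval not containing a finite $c_i$, its slope equals the number of $c_i$ strictly greater than $y$ (counting the $+\infty$ ones); by construction this matches the slope function of $\eval{P}$ on every interval delimited by the breakpoints $-s_j$. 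Since both sides also coincide for $y\to-\infty$, where each equals $P_n+ny$ (the slope-$n$ line dominates the minimum defining $\eval{P}$), they coincide everywhere.

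Uniqueness is immediate: $\eval{P}$ determines, through its asymptotic behaviour and its breakpoints, all the data of the factorization. The leading coefficient $P_n$ is the intercept of the slope-$n$ asymptote as $y\to-\infty$; the number of $c_i$ equal to $+\infty$ is the limiting slope $\val P$ as $y\to+\infty$; the finite $c_i$, with their multiplicities, are the negatives of the breakpoint abscissae weighted by the corresponding slope jumps. The prescribed ordering $c_1\leq\cdots\leq c_n$ then fixes the sequence uniquely.

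The only step requiring care is the slope bookkeeping: one must verify that the slope jump of $\eval{P}$ at each breakpoint $-s_j$ equals the horizontal length $k_j-k_{j-1}$ of the corresponding edge of the Newton polygon, so that the multiplicity of $-s_j$ among the $c_i$'s correctly matches this jump. This is a direct computation from the defining minimum $\eval{P}(y)=\min_k(P_k+ky)$ together with the convexity of the lower hull, and is the one technical point I expect actually to work through in the proof.
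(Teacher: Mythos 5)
The paper does not actually prove Theorem~\ref{th22}; it cites it from \cite{cuning80} and then uses it. So there is no in-paper proof to compare against. Your blind proof is, however, correct, and it reconstructs exactly the dictionary that the paper spells out a few lines later in Proposition~\ref{prop-prenewton0}, Theorem~\ref{th21}, and Proposition~\ref{prop-prenewton}: roots $=$ opposites of slopes of $\vex P$, multiplicities $=$ horizontal lengths of the corresponding edges, the $\zero=+\infty$ root carrying multiplicity $\val P$. Concretely, writing the right-hand side of~\eqref{th22.1} as $P_n+\sum_i\min(y,c_i)$ and observing that its slope at a generic $y$ is $\#\{i:c_i>y\}$ is the right move; your bookkeeping that the slope jump of $\eval{P}$ at $y=-s_j$ equals $k_{j-1}-k_j$ while the multiplicity of $-s_j$ among the $c_i$ is $k_j-k_{j-1}$ matches the two piecewise-affine concave functions jump for jump, and agreement on the half-line $y<-s_r$ (where both equal $P_n+ny$) then forces agreement everywhere. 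Uniqueness as you argue it — $P_n$ from the $y\to-\infty$ asymptote, $\val P$ from the $y\to+\infty$ slope, and the finite $c_i$ with multiplicities from the breakpoints and slope jumps — is sound, since a factorization~\eqref{th22.1} with $c_1\le\cdots\le c_n$ determines and is determined by these data. The only cosmetic remark is that "coincide for $y\to-\infty$" should be read, as you evidently intend, as "coincide on the interval $y<-s_r$"; pointwise agreement at $-\infty$ alone would not pin down the additive constant.
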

The $c_i$ will be called the \new{roots} of $\eval{P}$. 
\typeout{CHECK cuning80}
The \new{multiplicity} of the root $c$ is the cardinality of the set
 $\set{j\in\{1,\ldots, n\}}{c_j = c}$. 
We shall denote by $\corn{\eval P}$ the sequence of roots: 
$\corn{\eval{P}}=(c_1,\ldots,c_n)$.
By extension, if $P\in\rmin[\iY]$ is a formal polynomial,
we will call \new{roots} of $P$ the roots of $\eval{P}$,
so $\corn{P}:=\corn{\eval{P}}$. The next
properties also follow from~\cite{cuning80};
they show that the definition of the roots in Theorem~\ref{th22}
is a special case of the notion of a tropical hypersurface
defined as the nondifferentiability locus of a tropical polynomial~\cite{itenberg}.
\begin{proposition}[See~\cite{cuning80}]\label{prop-prenewton0}
The roots $c\in\R$ of a formal polynomial $P\in \rmin[\iY]$
are exactly the points at which the function $\eval{P}$ is not differentiable.
The multiplicity of a root $c\in\R$
is equal to the variation of slope
of $\eval{P}$ at $c$, $\eval{P}'(c^-)-\eval{P}'(c^+)$.
Moreover, $\zero$ is a root of $P$ if, and only if, $\eval{P}'(\zero^-):=
\lim_{c\to +\infty}\eval{P}'(c)\neq 0$.
In that case $\eval{P}'(\zero^-)$ is the multiplicity of $\zero$,
and it coincides with $\val P$. \hfill \qed
\end{proposition}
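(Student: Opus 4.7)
The plan is to derive everything from the factorization given by Theorem~\ref{th22}, by expanding the min-plus product into a usual sum.

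First, I would rewrite the factorization in classical notation: from $\eval{P}(y) = P_n (y\oplus c_1) \cdots (y\oplus c_n)$ one obtains
\[
\eval{P}(y) = P_n + \sum_{i=1}^{n} \min(y,c_i) \enspace.
\]
For each $c_i \in \R$, the function $y\mapsto \min(y,c_i)$ is concave and piecewise affine, equal to $y$ for $y\le c_i$ (slope $1$) and to $c_i$ for $y\ge c_i$ (slope $0$), so it is differentiable everywhere except at $c_i$, where the slope drops by exactly $1$. For $c_i = \zero = +\infty$, the corresponding factor equals $y$, contributing slope $1$ everywhere.

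From this decomposition, the left/right derivatives of $\eval{P}$ satisfy
\[
\eval{P}{}'(c^-) - \eval{P}{}'(c^+) = \#\{\,i \mid c_i = c\,\},
\]
for every $c \in \R$. This establishes that $\eval{P}$ is differentiable off the set of finite roots, that non-differentiability at $c$ holds precisely when $c$ appears among the $c_i$, and that the variation of slope at $c$ equals the multiplicity of $c$; these are the first two claims.

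For the statement concerning $\zero$, the same decomposition gives
\[
\lim_{c\to+\infty} \eval{P}{}'(c) = \#\{\,i \mid c_i = \zero\,\},
\]
since only the factors with $c_i = \zero$ still contribute slope $1$ for $y$ large. This equals the multiplicity of $\zero$, and is nonzero exactly when $\zero$ is a root. The only remaining point is to identify this quantity with $\val P$. By the very definition of $\eval{P}$,
\[
\eval{P}(y) = \min_{k \in \N} (P_k + k y)\enspace,
\]
and among the $k$ with $P_k \neq \zero$, the smallest one (namely $\val P$) achieves the minimum for all sufficiently large $y$, because each $k > \val P$ satisfies $(k-\val P) y > P_{\val P} - P_k$ for $y$ large enough. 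Hence $\lim_{c\to+\infty}\eval{P}{}'(c) = \val P$, completing the proof.

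The argument is essentially computational once the min-plus product is unfolded; the only subtle step is the identification $\eval{P}{}'(\zero^-) = \val P$, which relies on the fact that although the map $P\mapsto \eval{P}$ is not injective, the asymptotic slope at $+\infty$ depends only on the smallest index of a non-zero coefficient of the formal polynomial $P$, and this is exactly what I would check carefully.
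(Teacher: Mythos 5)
Your proof is correct. Note first that the paper does not supply its own argument for this proposition: it is stated with a closing qed and a citation to Cuninghame-Green and Meijer, so there is no ``paper proof'' to compare against. Your derivation is a clean, self-contained route: unfolding $\eval{P}(y)=P_n(y\oplus c_1)\cdots(y\oplus c_n)$ into the ordinary sum $P_n+\sum_{i}\min(y,c_i)$ immediately yields the slope-drop identity $\eval{P}{}'(c^-)-\eval{P}{}'(c^+)=\#\{i : c_i=c\}$, which gives both the non-differentiability characterization and the multiplicity statement; and the asymptotic slope $\lim_{c\to+\infty}\eval{P}{}'(c)$ is computed in two ways, once from the factorization (giving the number of infinite $c_i$, i.e.\ the multiplicity of $\zero$) and once from $\eval{P}(y)=\min_k(P_k+ky)$ (giving $\val P$), establishing the final identity. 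The only implicit hypotheses are that $P$ is not the zero polynomial (so that Theorem~\ref{th22} applies and $P_n$ is finite) and that $\eval{P}$, being a pointwise minimum of finitely many affine functions, is eventually affine so the limit defining $\eval{P}{}'(\zero^-)$ exists; both are standing assumptions here and worth stating, but they do not constitute a gap.
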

Legendre-Fenchel duality allows one to relate the tropical
roots to the slopes of Newton polygons. 
To see this, denote by $\vex f$ the convex
hull of a map $f:\R\to \Rbar$, and
denote by $\divex{P}$ the formal polynomial
whose sequence of coefficients is obtained by
restricting to $\N$ the map $\vex P$:
$k\mapsto \divex{P}_k:=(\vex P)(k)$, for $k\in \N$.
The function $\vex P$ is finite on
the interval $[\val P, \deg P]$.
Also, it should be noted that the graph of $\vex P$ is the standard
{\em Newton polygon} associated to the sequence of points $(k, P_k)$, $k\in [\val P,\deg P]$. 
\begin{theorem}[{\cite[Th.~3.43, 1 and 2]{bcoq}}]\label{th21}
A formal polynomial of degree $n$, $P \in \rmin[\iY]$, satisfies 
$P=\divex{P}$ if,
and only if, there exist $c_1\leq \cdots \leq c_n \in\rmin$ 
such that 
\[P = P_n (\iY\oplus c_1) \cdots (\iY\oplus c_n)\enspace
 .\]
The $c_i$ are unique and given,  by:
\begin{equation}\label{corners0}%
c_i=\begin{cases}  P_{n-i}  (P_{n-i+1})^{-1}& \mrm{if } P_{n-i+1}\neq\zero\\
\zero &\mrm{otherwise,}
\end{cases}\qquad \mrm{for } i=1,\ldots, n\enspace .
\end{equation}
\end{theorem}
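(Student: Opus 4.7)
My plan is to prove both implications of Theorem~\ref{th21} by a direct computation of the Cauchy product, and then read off the correspondence between factorizations and the convex hull condition $P=\divex{P}$.

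For the implication from factorization to $P=\divex{P}$, I would expand
\[ P_n (\iY\oplus c_1)\cdots (\iY\oplus c_n) \]
using min-plus distributivity. The coefficient of $\iY^{n-k}$ is $P_n \otimes \bigoplus_{|S|=k}\bigotimes_{i\in S} c_i$, and since $c_1\leq\cdots\leq c_n$, the minimum is attained at $S=\{1,\ldots,k\}$. In classical notation this gives $P_{n-k}= P_n + c_1+\cdots+c_k$, with the convention that a factor $c_i=\zero$ makes all subsequent coefficients equal to $\zero$. The second difference of the sequence $k\mapsto P_{n-k}$ is exactly $c_{k+1}-c_k\geq 0$, so the sequence $(P_j)_{j=0}^n$ is convex on $[\val P,\deg P]$, meaning $P=\divex{P}$. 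The same computation already yields the explicit formula~\eqref{corners0}: whenever $P_{n-i+1}\neq\zero$ we read $c_i = P_{n-i}-P_{n-i+1}= P_{n-i}\otimes (P_{n-i+1})^{-1}$, while for $i>n-\val P$ we get $c_i=\zero$.

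For the converse, I assume $P=\divex{P}$ and \emph{define} the $c_i$ by the formula~\eqref{corners0}. The convexity of the sequence $(k,P_k)$ on $[\val P,\deg P]$ is precisely the statement that the successive differences $P_{n-i}-P_{n-i+1}$ are nondecreasing in $i$, so $c_1\leq\cdots\leq c_{n-\val P}$, and extending by $c_i=\zero$ for $i>n-\val P$ keeps the sequence nondecreasing. Applying the expansion from the first paragraph to this candidate $c$-sequence produces a polynomial whose coefficient of $\iY^{n-k}$ is $P_n+c_1+\cdots+c_k$, which by telescoping equals $P_{n-k}$. Hence $P = P_n(\iY\oplus c_1)\cdots(\iY\oplus c_n)$, and uniqueness of the $c_i$ is immediate from~\eqref{corners0}.

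The only real subtlety is the bookkeeping near $\val P$: when $P_{n-i+1}=\zero$, the ``slope'' $c_i$ must be set to $\zero=+\infty$, and one must check that the telescoping still recovers the vanishing of $P_{n-k}$ for $k>n-\val P$. This is where I expect to be most careful, but it is automatic once one adopts the convention that $\zero$ absorbs under $\otimes$; alternatively, one can treat the polynomial $P/\iY^{\val P}$ first (which has nonzero constant term) and then multiply back by $\iY^{\val P}$, the $\val P$ extra roots at $\zero$ matching Proposition~\ref{prop-prenewton0}. No deeper ingredient is required: the result is essentially the observation that Legendre-Fenchel duality between coefficient sequences and polynomial functions is an involution on convex sequences, with the slopes of the Newton polygon being the tropical roots.
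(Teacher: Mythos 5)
The paper itself does not prove Theorem~\ref{th21}: it is cited from \cite[Th.~3.43]{bcoq} and used as a black box, so there is no in-paper proof to compare against. That said, your argument is correct and is essentially the standard one. The key computation — that the coefficient of $\iY^{n-k}$ in $P_n(\iY\oplus c_1)\cdots(\iY\oplus c_n)$ is $P_n\otimes\bigoplus_{|S|=k}\bigotimes_{i\in S}c_i$, and that with $c_1\leq\cdots\leq c_n$ the minimum is attained at the initial segment $S=\{1,\ldots,k\}$ — is right, and the telescoping argument correctly recovers~\eqref{corners0} and its converse. Your handling of the boundary near $\val P$ is also sound: for $i \geq n-\val P+1$ the formula gives $c_i=\zero$ (either because $P_{n-i+1}=\zero$, or because $P_{n-i}=\zero$ and $\zero$ is absorbing under $\otimes$), and the presence of a single $\zero$ factor in $c_1\cdots c_k$ forces $P_{n-k}=\zero$ for $k>n-\val P$, matching the definition of $\val P$. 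Uniqueness is indeed immediate: once the factorization holds with $c_1\leq\cdots\leq c_n$, the same coefficient computation forces $c_k=P_{n-k}\otimes(P_{n-k+1})^{-1}$ when $P_{n-k+1}\neq\zero$. One minor point worth tightening if this were to be written out in full: the claim ``the second difference is $c_{k+1}-c_k\geq 0$, hence $P=\divex P$'' silently uses that convexity of the sequence $k\mapsto P_{n-k}$ is equivalent to convexity of $j\mapsto P_j$ on $[\val P,n]$ (true, since reversing a finite index set preserves convexity) and that convexity of the integer sequence on $[\val P,n]$, together with $P_j=\zero$ off that interval, is equivalent to $P=\divex P$ in the sense defined via \eqref{pasafunction}. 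Both are elementary, but they are exactly the ``Legendre--Fenchel involution on convex sequences'' observation you flag at the end, and they deserve one explicit sentence rather than being left implicit.
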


The following standard observation
relates the tropical roots with the Newton polygon.
\begin{proposition}\label{prop-prenewton}%
The roots $c\in\R$ of a formal polynomial $P\in \rmin[\iY]$
coincide with the opposites of the slopes of the affine parts
of $\vex P:[\val P,\deg P]\to \R$.
The multiplicity of a root $c\in\R$
coincides with the length of 
the interval in which $\vex P$ has slope $-c$.
\hfill \qed 
\end{proposition}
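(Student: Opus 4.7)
The plan is to reduce the claim to Theorem~\ref{th21} after replacing $P$ by its ``convex regularization'' $\divex{P}$. First I would show that the polynomial \emph{functions} agree: $\eval{P} = \eval{\divex{P}}$. Writing $\eval{P}(y) = \min_{x\in\R}(P(x) + xy)$ with $P$ extended by $+\infty$ off $\N$ as in~\eqref{pasafunction}, the affinity of $x\mapsto xy$ gives
\[ \min_{x\in\R}(P(x) + xy) = \min_{x\in\R}\bigl((\vex P)(x) + xy\bigr), \]
and the latter infimum is attained at a vertex of $\vex P$, which lies in $\N \cap [\val P,\deg P]$ and at which $\vex P$ and $P$ coincide. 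Restricting the minimization in the right hand side to $\N$ and using $\divex{P}_k = (\vex P)(k)$ yields $\eval{\divex{P}}(y) \leq \eval{P}(y)$ and, on the other hand, $\divex{P}_k \leq P_k$ for all $k$ gives the reverse inequality, proving equality.

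Next, $\divex{P}$ satisfies $\divex{P} = \vex{\divex{P}}$ by construction (its coefficient sequence lies on the Newton polygon), so Theorem~\ref{th21} applies and factors $\divex{P} = \divex{P}_n (\iY\oplus c_1)\cdots (\iY\oplus c_n)$ with the $c_i$ explicitly given by~\eqref{corners0}. Noting that $\divex{P}_n = P_n$ since $n=\deg P$ is always a vertex of $\vex P$, and passing to polynomial functions via $\eval{P} = \eval{\divex{P}}$, I would conclude by Theorem~\ref{th22} (uniqueness of the factorization of $\eval{P}$) that these $c_i$ are exactly the tropical roots of $P$ with their multiplicities. Finally, I would rewrite~\eqref{corners0} in classical notation for $c_i \in \R$ as
\[ c_i = \divex{P}_{n-i} - \divex{P}_{n-i+1} = (\vex P)(n-i) - (\vex P)(n-i+1), \]
which is the opposite of the slope of $\vex P$ on the unit interval $[n-i,n-i+1]$. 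As $i$ ranges over those indices giving a finite value, the multiset $\{c_i\}$ thus enumerates the opposites of slopes of $\vex P$ on each unit subinterval, so a given $c\in\R$ appears with multiplicity equal to the number of unit intervals where $\vex P$ has slope $-c$; since $\vex P$ is piecewise affine with integer breakpoints, this equals the length of the (maximal) affine piece of slope $-c$, as claimed.

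The only real subtlety — and hence the main, though minor, obstacle — is keeping the two roles of $P$ straight: as a formal polynomial whose coefficients need \emph{not} lie on the Newton polygon, and as the data whose convex hull $\vex P$ governs the polynomial function. The argument $\eval{P} = \eval{\divex{P}}$ is precisely what legitimates passing from one to the other, and the statement's restriction to finite roots $c\in\R$ and to the finite interval $[\val P,\deg P]$ sidesteps having to separately analyze the behavior of $\vex P$ at $\pm\infty$, the latter being already covered by the last assertion of Proposition~\ref{prop-prenewton0}.
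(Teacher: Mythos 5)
Your argument is correct in substance and follows essentially the route the paper has in mind: reduce to $\divex P$, apply Theorem~\ref{th21} to read off the roots as coefficient differences, and translate these into slopes of $\vex P$. The paper itself gives no written proof (only the remark pointing to Legendre--Fenchel subdifferential duality, which combined with Proposition~\ref{prop-prenewton0} yields the same conclusion); your version is a more hands-on rendering of the same idea and is a perfectly acceptable substitute.

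There is, however, one labeling slip that should be fixed before this is a watertight proof. In establishing $\eval P = \eval{\divex P}$, you claim that restricting the minimization to $\N$ yields $\eval{\divex P}(y)\leq\eval P(y)$ while $\divex P_k\leq P_k$ gives ``the reverse inequality.'' In fact both of these give the \emph{same} direction $\eval{\divex P}(y)\leq\eval P(y)$ (a smaller coefficient sequence can only lower the min), so as written you never obtain the other bound. The missing direction is immediate though: since $\eval P(y)=\min_{x\in\R}\bigl((\vex P)(x)+xy\bigr)$ and the minimum over $\N$ is taken over a smaller set,
\[
\eval{\divex P}(y)=\min_{k\in\N}\bigl((\vex P)(k)+ky\bigr)\;\geq\;\min_{x\in\R}\bigl((\vex P)(x)+xy\bigr)=\eval P(y)\,.
\]
So the ``restriction'' step actually supplies the $\geq$ bound, and $\divex P_k\leq P_k$ supplies the $\leq$ bound; the vertex observation is not even needed for the equality, only to locate where the min is attained. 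Once these labels are swapped the argument is complete. The rest of the proof — passing through Theorem~\ref{th21}, rewriting~\eqref{corners0} as $c_i=(\vex P)(n-i)-(\vex P)(n-i+1)$, and counting unit intervals of a given slope — is clean and correct, and your closing remark about why restricting to finite roots and to $[\val P,\deg P]$ avoids any discussion of behavior at infinity is exactly the right observation.
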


\begin{remark}
The duality between tropical roots and slopes of the Newton polygon
in Proposition~\ref{prop-prenewton}
is a special case of the Legendre-Fenchel duality formula for
subdifferentials: $-c\in \partial (\vex P) (x)\Leftrightarrow
x\in \partial \sF (P) (-c) \Leftrightarrow
x\in \partial^+\eval{P} (c) $ where $\partial$ and $\partial^+$ denote
the subdifferential and superdifferential,
respectively~\cite[Th.~23.5]{rockafellar}.
\end{remark}
The above notions are illustrated in Figure~\ref{minpoly}, where
we consider the formal min-plus polynomial 
$P=\iY^3\oplus 5\iY^2 \oplus 6\iY \oplus 13$.
The map $j\mapsto P_j$, together with the 
map $\vex P$, are depicted at the left of the figure,
whereas the polynomial function $\eval{P}$ is depicted
at the right of the figure. We have $\divex{P}= \iY^3\oplus 3\iY^2 \oplus 6\iY\oplus 13
= (\iY\oplus 3)^2(\iY\oplus 7)$. Thus, the roots
of $P$ are $3$ and $7$, with respective multiplicities
$2$ and $1$. The roots are visualized at the right
of the figure, or alternatively, as the opposite
of the slopes of the two line segments at the
left of the figure. The multiplicities can be read
either on the map $\eval{P}$ at the right of the figure (the variation of slope
of $\eval{P}$ at points $3$ and $7$ is $2$ and $1$, respectively),
or on the map $\vex P$ at the left of the figure (as the respective horizontal
widths of the two segments). 
\begin{figure}[htb]
\begin{center}
\begin{picture}(0,0)%
\includegraphics{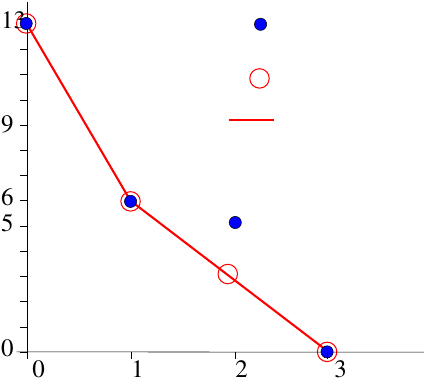}%
\end{picture}%
\setlength{\unitlength}{1316sp}%
\begingroup\makeatletter\ifx\SetFigFont\undefined%
\gdef\SetFigFont#1#2#3#4#5{%
  \reset@font\fontsize{#1}{#2pt}%
  \fontfamily{#3}\fontseries{#4}\fontshape{#5}%
  \selectfont}%
\fi\endgroup%
\begin{picture}(6117,5427)(811,-5476)
\put(4801,-526){\makebox(0,0)[lb]{\smash{{\SetFigFont{8}{9.6}{\rmdefault}{\mddefault}{\updefault}{\color[rgb]{0,0,0}$=P$}%
}}}}
\put(4816,-1239){\makebox(0,0)[lb]{\smash{{\SetFigFont{8}{9.6}{\rmdefault}{\mddefault}{\updefault}{\color[rgb]{0,0,0}$=\bar P$}%
}}}}
\put(4801,-1861){\makebox(0,0)[lb]{\smash{{\SetFigFont{8}{9.6}{\rmdefault}{\mddefault}{\updefault}{\color[rgb]{0,0,0}$=\vex P$}%
}}}}
\end{picture}%
\hskip 3em 
\begin{picture}(0,0)%
\includegraphics{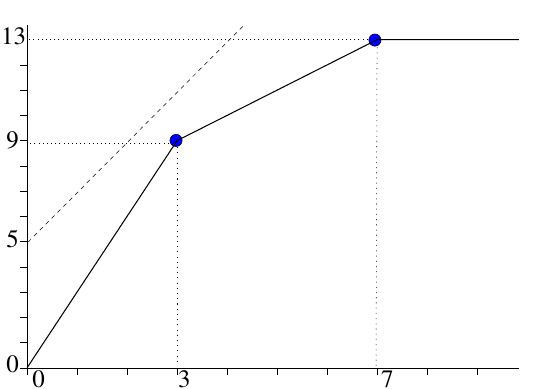}%
\end{picture}%
\setlength{\unitlength}{1316sp}%
\begingroup\makeatletter\ifx\SetFigFont\undefined%
\gdef\SetFigFont#1#2#3#4#5{%
  \reset@font\fontsize{#1}{#2pt}%
  \fontfamily{#3}\fontseries{#4}\fontshape{#5}%
  \selectfont}%
\fi\endgroup%
\begin{picture}(7951,5566)(811,-5401)
\put(2431,-3856){\makebox(0,0)[lb]{\smash{{\SetFigFont{8}{9.6}{\rmdefault}{\mddefault}{\updefault}{\color[rgb]{0,0,0}$3y$}%
}}}}
\put(4711,-1486){\makebox(0,0)[lb]{\smash{{\SetFigFont{8}{9.6}{\rmdefault}{\mddefault}{\updefault}{\color[rgb]{0,0,0}$6+y$}%
}}}}
\put(2401,-1261){\makebox(0,0)[lb]{\smash{{\SetFigFont{8}{9.6}{\rmdefault}{\mddefault}{\updefault}{\color[rgb]{0,0,0}$5+2y$}%
}}}}
\end{picture}%
\end{center}
\caption{The Newton poygon of the formal min-plus polynomial $P=\iY^3\oplus 5\iY^2 \oplus 6\iY \oplus 13$ (left) and the associated polynomial function $\eval P$ (right).\label{minpoly}}
\end{figure}
We conclude this section by two technical results. 
\begin{lemma}\label{carac-ci}%
Let $P=\bigoplus_{i=0}^n P_i \iY^i\in\rmin[\iY]$ be a formal polynomial
of degree $n$. Then, $\corn{P}=(c_1\leq \cdots \leq c_n)$ if, and only if,
$P\geq P_n (\iY\oplus c_1)\cdots (\iY\oplus c_n)$ and
\begin{equation}\label{carac-ci1}
 P_{n-i}=P_n c_1 \cdots c_i\quad \mrm{for all } i\in\{0,n\}\cup\set{i\in
\{1,\ldots , n-1\}}{c_i< c_{i+1}}\enspace .\end{equation}
In particular, $P_{n-i}=\divex{P}_{n-i}$ holds for
all $i$ as in~\eqref{carac-ci1}, and $\divex{P}= P_n (\iY\oplus c_1)\cdots (\iY\oplus c_n)$.
\end{lemma}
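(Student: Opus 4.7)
The plan is to compare $P$ with the formal polynomial $Q := P_n(\iY\oplus c_1)\cdots(\iY\oplus c_n)$ and exploit the interplay between formal polynomials, polynomial functions, and Newton polygons supplied by Theorems~\ref{th22} and~\ref{th21} and Proposition~\ref{prop-prenewton}.

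For the direct implication, I start from $\corn{P} = (c_1,\ldots,c_n)$. Theorem~\ref{th22} immediately gives $\eval{P} = \eval{Q}$, while expanding $Q$ using $c_1\leq\cdots\leq c_n$ shows $Q_{n-i} = P_n c_1 \cdots c_i$ for every $0\leq i\leq n$, and Theorem~\ref{th21} yields $Q = \divex{Q}$. Since the convex hull $\vex$ is recovered from the polynomial function by Legendre--Fenchel biconjugation (the remark following Proposition~\ref{prop-prenewton} records the relevant duality), the equality $\eval{P} = \eval{Q}$ forces $\divex{P} = \divex{Q} = Q$. This establishes the factorization of $\divex{P}$ and the inequality $P\geq \divex{P} = Q$. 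To obtain the coefficient equalities~\eqref{carac-ci1}, I will use Proposition~\ref{prop-prenewton} to identify the listed indices $i$ with the vertices of the Newton polygon of $P$; at those vertices $P$ and $\vex P$ must coincide by extremality, giving $P_{n-i} = \divex{P}_{n-i} = Q_{n-i} = P_n c_1\cdots c_i$.

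For the converse, the plan is to deduce $\eval{P} = \eval{Q}$ directly from the two hypotheses. Coefficient-wise $P\geq Q$ yields $\eval{P}\geq \eval{Q}$. For the reverse inequality, fix $y\in\rmin$ and set $i^* := \max\{0\leq i\leq n : c_i\leq y\}$, with conventions $c_0 := -\infty$, $c_{n+1} := +\infty$, $\max\varnothing := 0$. Since the first difference
\[
\bigl(Q_{n-(i+1)} + (n-i-1)y\bigr) - \bigl(Q_{n-i} + (n-i)y\bigr) = c_{i+1} - y
\]
has the sign of $c_{i+1}-y$, the minimum of $i\mapsto Q_{n-i} + (n-i)y$ is attained at $i^*$. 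The very definition of $i^*$ forces either $i^*\in\{0,n\}$ or $c_{i^*} < c_{i^*+1}$, so $i^*$ lies in the set appearing in~\eqref{carac-ci1}, and the coefficient hypothesis gives $P_{n-i^*} = Q_{n-i^*}$. Hence $\eval{P}(y) \leq P_{n-i^*} + (n-i^*)y = \eval{Q}(y)$, and the uniqueness part of Theorem~\ref{th22} then yields $\corn{P} = (c_1,\ldots,c_n)$.

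The main obstacle I expect is the direct implication, specifically the passage from $\eval{P} = \eval{Q}$ to $\divex{P} = Q$ and the identification $P_{n-i} = \divex{P}_{n-i}$ at the special indices. Both reduce to standard convex-analysis facts, but some care is needed when certain $c_i$ equal $\zero$ or when $\val P > 0$, so that the relevant convex envelope must be treated as an extended-real-valued function at the boundary of the Newton polygon.
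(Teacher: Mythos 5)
Your proof is correct, and the two directions have different relationships to the paper's argument.

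For the ``only if'' direction you follow essentially the paper's route: the equalities~\eqref{carac-ci1} are obtained by identifying the indices $n-i$ with the breakpoints of $\vex P$ and invoking the fact that extreme points of the convex hull of the epigraph belong to the original epigraph. The one cosmetic difference is that you pass from $\eval{P}=\eval{Q}$ to $\divex{P}=\divex{Q}=Q$ via Fenchel biconjugation, whereas the paper reaches $\divex{P}=\divex{P}_n(\iY\oplus c_1)\cdots(\iY\oplus c_n)$ directly from Theorem~\ref{th21}; either works. As you anticipate, the endpoints $i=0$ and $i=n$ do require a small extra argument (the paper argues via $\deg$ and $\val$ of $P$ and $\divex P$ coinciding), in particular when $\val P>0$ one gets $P_0=\divex{P}_0=\zero$ rather than a finite equality.

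The ``if'' direction is where your argument genuinely diverges from the paper's, and in an interesting way. The paper works on the Newton-polygon side: it sandwiches $\vex P$ between $P$ and the convex function $Q$, observes that $\vex P$ and $Q$ agree at all breakpoints and endpoints of $Q$, and concludes $\vex P=Q$ because two convex piecewise-affine functions sharing the same vertex values must coincide; the conclusion then follows since $\corn{P}=\corn{\divex P}$. Your argument instead works on the dual (polynomial-function) side: for each $y$ you locate the index $i^*=\max\{i:c_i\leq y\}$ where $\min_i\bigl(Q_{n-i}+(n-i)y\bigr)$ is attained, observe that this $i^*$ always lies in the set of indices appearing in~\eqref{carac-ci1} (either it is $0$ or $n$, or else $c_{i^*}\leq y<c_{i^*+1}$ forces $c_{i^*}<c_{i^*+1}$), and deduce $\eval{P}(y)\leq P_{n-i^*}+(n-i^*)y=\eval{Q}(y)$; combined with the trivial $\eval{P}\geq\eval{Q}$ this gives $\eval{P}=\eval{Q}$, and the uniqueness in Theorem~\ref{th22} finishes the job. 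This is a correct and somewhat more hands-on route: it avoids the monotonicity of the convex-hull operator and the piecewise-affine comparison argument, at the price of a case analysis on $i^*$ (including the conventions $c_0=-\infty$, $c_{n+1}=+\infty$, and a glance at $y=\zero$, where the claim reduces to $P_0=Q_0$, which is exactly the case $i=n$ of~\eqref{carac-ci1}). Both proofs are complete; yours is the ``primal'' (polynomial-function) version of the paper's ``dual'' (Newton-polygon) argument.
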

\begin{proof}
We first prove the ``only if'' part. 
If $\corn{P}=(c_1\leq \cdots \leq c_n)$,
then $\divex{P}=\divex{P}_n (\iY\oplus c_1)\cdots (\iY\oplus c_n)$ and,
by Theorem~\ref{th21},
$\divex{P}_{n-i}= \divex{P}_n c_1\cdots c_i$ for all $i=1,\ldots n$.
Recall that $P$ defines a map $x\mapsto P(x)$ by~\eqref{pasafunction}.
By definition of $\vex P$,
the epigraph of $\vex P$, $\epi \vex P$,
is the convex hull of the epigraph of $P$,
$\epi P$. By a classical result~\cite[Cor~18.3.1]{rockafellar}, if $S$ is a 
set with convex hull $C$, any extreme point of $C$ belongs
to $S$. Let us apply this to $S=\epi P$ and
$C=\epi \vex P$.
Since 
$c_i=\divex{P}_{n-i}  (\divex{P}_{n-i+1})^{-1}$,
the piecewise affine map $\vex P$
changes its slope at any 
point $n-i$ such that 
$c_i<c_{i+1}$.
Thus, any point $(n-i,\vex{P}(n-i))$ with $c_i<c_{i+1}$
is an extreme point of $\epi \vex P$,
which implies that $(n-i,\vex{P}(n-i))\in \epi P$,
i.e., $P_{n-i}\leq \vex{P}(n-i)=\divex P_{n-i}$. Since
the other inequality is trivial by definition of the convex hull, we
have $P_{n-i}=\divex{P}_{n-i}$. 
Obviously, $P$ and $\divex P$ have the same
degree, which is equal to $n$, and they have the same
valuation, $k$. Then, $(n,\vex P(n))$
and $(k,\vex P(k))$ are extreme points
of $\epi \vex P$, and by the preceding argument,
$P_n=\divex P_n$, and $P_k=\divex P_k$.
Hence, $P_0=\divex P_0$, if $k=0$,
and $P_0=\divex P_0=+\infty$, if $k>0$.
We have shown ~\eqref{carac-ci1},
together with the last statement of the lemma.
Since $\divex{P}_n=P_n$ and $P\geq \divex{P}$, we also obtain 
$P\geq P_n (\iY\oplus c_1) \cdots (\iY\oplus c_n)$.

For the ``if'' part, assume that $P\geq P_n (\iY\oplus c_1) \cdots (\iY\oplus c_n)$
and that~\eqref{carac-ci1} holds. Since $Q=P_n (\iY\oplus c_1) \cdots
 (\iY\oplus c_n)$ is convex, and the convex hull map 
$P\mapsto \divex{P}$ is monotone, we must have $\divex{P}\geq \divex{Q}=Q$.
Hence, $ P \geq \divex{P}\geq Q$ and since $P_{n-i}=Q_{n-i}$ 
for all $i$ as in~\eqref{carac-ci1}, we must have $P_{n-i}=\divex{P}_{n-i}=
Q_{n-i}$, thus $\vex P(n-i)=Q(n-i)$
at these $i$. Since $\vex P$ and $Q$ are convex, $Q$ is piecewise
affine and $Q(j)=\vex{P}(j)$ for $j$ at the boundary of the domain
of $Q$ and at all the $j$ where $Q$ changes of slope, we must
have $\vex{P}=Q$. Hence $\divex{P}=Q$
and $\corn{P}=\corn{\divex{P}}=\corn{Q}=(c_1,\ldots , c_n)$.
\end{proof}

\begin{corollary}\label{carac-mul}%
Let $P=\bigoplus_{i=0}^n P_i \iY^i\in\rmin[\iY]$ be a formal polynomial
of degree $n$. Let $c\in\R$ be a finite root of $P$ with multiplicity
$m$, and denote by $m'$  the sum of the multiplicities of all
the roots of $P$ greater than $c$ ($+\infty$ comprised).
Then, $P_{i}=\divex{P}_{i}$ for both $i=m'$  and $i=m+m'$, 
$\eval{P}(c)=P_{m'} c^{m'}=P_{m+m'}c^{m+m'}$ and
$\eval{P}(c)<P_{i}c^{i}$  for all $1\leq i<m'$ and $m+m'<i\leq n$.
\end{corollary}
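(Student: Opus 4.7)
The plan is to deduce everything from the factorization of $\eval{P}$ provided by Theorem~\ref{th22} together with the coefficient characterization in Lemma~\ref{carac-ci}. Writing the sorted roots as $c_1\leq c_2\leq \cdots\leq c_n$, the assumption that $c$ has multiplicity $m$ and that exactly $m'$ roots are strictly greater than $c$ (the $\val P$ copies of $\zero$ included) places the $m$ roots equal to $c$ at positions $n-m-m'+1,\ldots,n-m'$, the roots strictly greater than $c$ at positions $n-m'+1,\ldots,n$, and the remaining $n-m-m'$ roots, all strictly less than $c$, at positions $1,\ldots,n-m-m'$.

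First I would invoke Lemma~\ref{carac-ci} at the two indices $i=n-m'$ and $i=n-m-m'$ of formula~\eqref{carac-ci1}: when $m'>0$ we have $c_{n-m'}=c<c_{n-m'+1}$, so $n-m'$ belongs to the index set in~\eqref{carac-ci1}, and when $m'=0$ the index equals $n$, covered by the special clause $\{0,n\}$; symmetrically, $n-m-m'$ either lies in $\{0,n\}$ (when $m+m'=n$) or satisfies $c_{n-m-m'}<c_{n-m-m'+1}=c$. The lemma therefore yields $P_{m'}=\divex{P}_{m'}$ and $P_{m+m'}=\divex{P}_{m+m'}$, which is the first claim. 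I would then compute $\eval{P}(c)$ directly from the factorization: since $c\oplus c_i=c_i$ when $c_i<c$ and $c\oplus c_i=c$ otherwise,
\[ \eval{P}(c)=P_n c_1\cdots c_{n-m-m'}\,c^{m+m'}\enspace, \]
and Theorem~\ref{th21} identifies the right-hand side with $\divex{P}_{m+m'}\,c^{m+m'}=P_{m+m'}\,c^{m+m'}$; the telescoping $\divex{P}_{m'}=\divex{P}_{m+m'}\,c^m$ then also gives $\eval{P}(c)=P_{m'}\,c^{m'}$.

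For the strict inequality I would exploit the Newton-polygon picture of Proposition~\ref{prop-prenewton}. By Theorem~\ref{th21}, $\vex P$ has slope $-c_{n-k}$ on each unit interval $[k,k+1]$, so the convex piecewise affine map $k\mapsto\vex P(k)+kc$ has slope $c-c_{n-k}$: this slope vanishes exactly on $[m',m+m']$ (where $c_{n-k}=c$), is strictly negative to its left and strictly positive to its right. Hence $\divex{P}_i+ic>\eval{P}(c)$ for every integer $i\in[\val P,n]$ with $i<m'$ or $i>m+m'$, while for $i$ outside $[\val P,n]$ the inequality is trivial because $\divex{P}_i=\zero$. Since $P\geq\divex{P}$ pointwise (as $\vex P\leq P$ by definition of the convex hull), this upgrades to $P_i c^i>\eval{P}(c)$ on the same range. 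The only real bookkeeping lies in making sure the edge cases $m'=0$ and $m+m'=n$ fall under the $\{0,n\}$ clause of Lemma~\ref{carac-ci}, and no genuine difficulty arises.
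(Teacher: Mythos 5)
Your proof is correct and follows the same route as the paper's: apply Lemma~\ref{carac-ci} at the two critical indices to get $P_{m'}=\divex{P}_{m'}$ and $P_{m+m'}=\divex{P}_{m+m'}$, evaluate $\eval{P}(c)$ from the factorization of Theorem~\ref{th22}, and conclude the strict inequalities from $P\geq\divex{P}$ and the structure of $\divex{P}_i$. The only cosmetic difference is that for the strict inequality the paper compares $\eval{P}(c)$ with $P_n c_1\cdots c_{n-i}\,c^i$ factor by factor, whereas you phrase the same comparison as a slope statement about $k\mapsto\vex P(k)+kc$ on the Newton polygon; this is the same computation dressed geometrically.
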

\begin{proof}
Let us denote $\corn{P}=(c_1\leq \cdots \leq c_n)$.
By definition of $c$, $m$ and $m'$ we have $m\geq 1$, $m'\geq 0$,
$m+m'\leq n$,  $c=c_{n-m'-m+1}=\cdots=c_{n-m'}$, 
$c_{n-m'-m}<c$ if $n-m'-m>0$, and $c<c_{n-m'+1}$ if $n-m'<n$.
By Lemma~\ref{carac-ci1}, this implies that
for both $i=m'$  and $i=m+m'$,
$P_{i}=\divex{P}_{i}=P_n c_1 \cdots c_{n-i}$.
We also have
Since $\divex{P}_i = (\vex P)(i) \leq P_i $,
we have
$P_{i}\geq (\vex P)(i) =  \divex{P}_{i}=P_n c_1 \cdots c_{n-i}$ for all $i=0,\ldots, n$.
Moreover, by Theorem~\ref{th22}, we have $\eval{P}(c)=P_n 
(c\oplus c_1)\cdots (c\oplus c_n)= P_n c_1\cdots c_{n-m'-m} c^{m+m'}
=P_n c_1\cdots c_{n-m'} c^{m'}$.
Hence,  $\eval{P}(c)=P_{m+m'}c^{m+m'}=P_{m'} c^{m'}$,
and $\eval{P}(c)<P_n c_1 \cdots c_{n-i} c^i\leq P_i c^i$ for $i<m'$ and for
$i>m+m'$.
\end{proof}

\section{Tropical eigenvalues}
\label{sec-tropeig}
We now recall some classical results on tropical eigenvalues
and characteristic polynomials.

The {\em permanent} of a matrix with coefficients in an arbitrary semiring $(S,\oplus,\otimes)$ is defined by
\begin{align*}
\perm (A)&= \bigoplus_{\sigma\in \Sym_n}
\bigotimes_{i=1}^n A_{i\sigma (i)} \enspace,
\end{align*}
where $\Sym_n$ is the set of permutations of $[n]:=\{1,\ldots, n\}$.
In particular, for any matrix $A\in \rmin^{n\times n}$, 
\begin{align*}
\perm (A) =\min_{\sigma\in \Sym_n} |\sigma|_A \enspace ,
\end{align*}
where for any permutation $\sigma\in\Sym_n$,
we define the {\em weight} of $\sigma$ with respect to $A$ as
$|\sigma|_A:= A_{1\sigma(1)}+\cdots +A_{n\sigma(n)}$.

To any min-plus $n\times n$ matrix $A$, 
we associate the {\em (directed) graph} $G(A)$,
 which has set of nodes $[n]$ and an arc $(i, j)$
if $A_{ij}\neq \zero$, and the weight function which
associates the  weight $A_{ij}$ to the arc $(i, j)$ of $G(A)$.
In the sequel,
we shall omit the word ``directed'' as all graphs will be directed.
Then, $\perm (A)$ is the value of an optimal
assignment in this weighted graph.
It can be computed in $O(n^3)$ time using the Hungarian
algorithm~\cite[\S~17]{schrijver}.
We refer the reader to~\cite[\S~2.4]{br} or~\cite[\S~17]{schrijver} for more background on the optimal assignment problem and a discussion
of alternative algorithms.

We define the \new{formal characteristic polynomial} of $A$,
\[
P_A:=\perm (\iY I \oplus A)= \bigoplus_{\sigma\in \Sym_n}
\bigotimes_{i=1}^n (\iY \delta_{i\sigma (i)} \oplus A_{i\sigma (i)})
\in \rmin[\iY] \enspace,
\]
where $I$ is the identity matrix,
and  $\delta_{ij}=\unit$ if $i=j$ and $\delta_{ij}=\zero$ otherwise.
The formal polynomial
$P_A$ has degree $n$ and its coefficients are given by
 $(P_A)_k=\trm_{n-k} (A)$, for $k=0,\ldots , n-1$ and $(P_A)_n=\unit$, where
$\trm_k(A)$ is the min-plus $k$-th trace of $A$:
\begin{equation}\label{trmin}
\trm_k(A):=
\bigoplus_{J\subset\{1,\ldots , n\},\, \card{J}= k}\left(
\bigoplus_{\sigma\in \Sym_J} \bigotimes_{j\in J}  A_{j\sigma (j)} \right)
\enspace ,
\end{equation}
where $\Sym_J$ is the set of permutations of $J$.
The associated min-plus polynomial function will be called
the \new{characteristic polynomial function} of $A$, and 
its roots will be called the \new{(algebraic) eigenvalues} of $A$.

The algebraic eigenvalues of $A$ (and so, its
characteristic polynomial function) can be computed
in $O(n^4)$ time by the method of Burkard and Butkovi\v{c}~\cite{bb02}.
Gassner and Klinz~\cite{gassner} showed
that this can be reduced to a $O(n^3)$ time, using parametric optimal 
assignment techniques. 
However, it is not known whether the sequence of coefficients
of the {\em formal} characteristic polynomial $P_A$
can be computed in polynomial time.

The term {\em algebraic eigenvalue} is used here since unlike
for matrices with real or complex coefficients, 
a root $\lambda\in\rmin$
of the characteristic polynomial of a $n\times n$ min-plus matrix
$A$ may not satisfy $Au=\lambda u$ for some 
$u\in \rmin^n$. To avoid any confusion, we shall
call a scalar $\lambda$ with the latter property
a \new{geometric eigenvalue}. 
The following statement and remarks collect
 some results in tropical spectral theory,
which have been developed by several authors~\cite{cuning,vorobyev67,romanovski,gondran77,cohen83,maslov92,bapat95,agw04,MR2587784}.
We refer the reader to~\cite{bcoq,butkovicbook} for more
information.
We say that a matrix $A$ is {\em irreducible} if $G(A)$
is strongly connected. 
\begin{theorem}[See e.g.~\cite{butkovicbook}]
The minimal algebraic eigenvalue of a matrix $A\in \rmin^{n\times n}$
is given by
\begin{equation}
\rhomin(A) =\bigoplus_{k=1}^{n} \bigoplus_{i_1,\ldots,i_k}
(A_{i_1i_2}\cdots A_{i_ki_1})^{\frac{1}{k}}
\label{e-1} \enspace ,
\end{equation}
or equivalently, by the following expression called
{\em minimal circuit mean},
\begin{equation}
\min_{c\mrm{ circuit in } G(A)} \frac{|c|_A}{|c|}
\enspace ,
\label{e-2-2} 
\end{equation}
where for all paths $p=(i_0,i_1,\ldots , i_k)$
in $G(A)$, we denote by $|p|_A=
A_{i_0i_1 }+ \cdots + A_{i_{k-1}i_k}$ the \new{weight}
of $p$, and by $|p|=k$ its \new{length}, and the minimum
is taken over all elementary circuits of $G(A)$.
\hfill\qed
\end{theorem}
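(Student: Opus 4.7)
The plan is to identify $\rhomin(A)$ with the smallest root $c_1$ of the characteristic polynomial function $\eval{P_A}$, to compute $c_1$ via the Newton polygon, and then to recognize the result as a minimum over circuits of $G(A)$. By Proposition~\ref{prop-prenewton}, $c_1$ is the opposite of the largest (rightmost) slope of the convex hull $\vex P_A$ on $[0,n]$. Since $(P_A)_n = \unit = 0$ (witnessed by the identity permutation) and $(P_A)_{n-k} = \trm_k(A)$, the largest slope equals $-\min_{0 \leq j < n} (P_A)_j/(n-j)$, so that, after re-indexing $k = n-j$,
\[
c_1 = \min_{1 \leq k \leq n} \frac{\trm_k(A)}{k}.
\]

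Next, I would expand $\trm_k(A)$ by decomposing each permutation $\sigma \in \Sym_J$ (with $|J| = k$) into its disjoint cycles: each cycle is an elementary circuit in $G(A)$, its weight is the sum of the entries $A_{j\sigma(j)}$ along the cycle, and the cycle lengths add up to $k$. Thus $\trm_k(A)$ is the minimum, over all families $(c^{(1)},\ldots,c^{(r)})$ of pairwise vertex-disjoint elementary circuits of total length $k$, of $\sum_s |c^{(s)}|_A$. Denoting by $\mu$ the right-hand side of~\eqref{e-2-2}, the ratio $(\sum_s |c^{(s)}|_A)/(\sum_s |c^{(s)}|)$ is a convex combination of the individual circuit means, each bounded below by $\mu$; this yields $\trm_k(A)/k \geq \mu$, and hence $c_1 \geq \mu$.

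For the reverse inequality, I would pick any elementary circuit $c$ of length $k \leq n$, let $J$ be its vertex set, and let $\sigma\in\Sym_J$ be the associated cyclic permutation; then $\trm_k(A) \leq |c|_A$, so $c_1 \leq |c|_A/|c|$, and minimizing over $c$ gives $c_1 \leq \mu$. This establishes~\eqref{e-2-2}. Formula~\eqref{e-1} ranges over all closed walks in $G(A)$ of length at most $n$, but since any closed walk decomposes into elementary circuits and its mean is a convex combination of theirs, the minimum in~\eqref{e-1} is attained on an elementary circuit and coincides with $\mu$. The main delicate point is the Newton-polygon step: one must verify that the largest slope of $\vex P_A$ at its right endpoint equals $-\min_{j<n} (P_A)_j/(n-j)$ over \emph{all} indices $j$, not only over convex-hull vertices, which follows from $\vex P_A(j) \leq (P_A)_j$ with equality at vertices. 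The cycle-decomposition and convex-combination estimates are otherwise routine.
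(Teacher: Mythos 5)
The paper does not prove this statement; it only records it with a pointer to Butkovi\v{c}'s book, so there is no in-paper argument to compare against. Your proof is correct and follows the standard line of reasoning one would find in the cited reference. The three steps you give are exactly the right ones: (i) since $(P_A)_n=\unit$, Proposition~\ref{prop-prenewton} together with the convexity of $\vex P_A$ identifies $c_1$ with $\min_{1\le k\le n}\trm_k(A)/k$ (and your remark that taking the min over \emph{all} $j$, not just convex-hull vertices, is harmless because the chord slope through $(j,(P_A)_j)$ is never larger than the corresponding slope through $(j,\vex P_A(j))$, with equality at the adjacent vertex, is precisely the point that needs saying); (ii) the cycle decomposition of permutations shows $\trm_k(A)$ is the optimal weight of a family of vertex-disjoint elementary circuits of total length $k$, and the mediant inequality gives $\trm_k(A)/k\ge\mu$, while a single elementary circuit of length $k$ gives $\trm_k(A)/k\le|c|_A/|c|$; (iii) the passage from~\eqref{e-1} (closed walks of length at most $n$) to~\eqref{e-2-2} (elementary circuits) again uses the mediant inequality on the cycle decomposition of a closed walk. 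I see no gap; the only cosmetic point is that the domain of $\vex P_A$ is $[\val P_A,n]$ rather than $[0,n]$, but since $(P_A)_j=\zero$ for $j<\val P_A$ contributes $+\infty$ to the minimum this changes nothing.
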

An important notion to be used in the sequel
is the one of \new{critical} circuit,
i.e., of circuit $c=(i_1,i_2,\ldots, i_k,i_1)$ of
$G(A)$ attaining the minimum in~\eqref{e-2-2}.
The \new{critical graph} of $A$ is
the union of the critical circuits, that is the graph whose nodes and arcs
belong to critical circuits. 
It is known that $\rhomin(A)$ is the minimal geometric eigenvalue
of $A$ and that the multiplicity
ot $\rhomin(A)$ as a geometric eigenvalue (i.e., the ``dimension''
of the associated eigenspace) concides with the number
of strongly connected components of the critical graph.
Note also that, if $A$ is irreducible, $\rhomin(A)$
is the unique  geometric eigenvalue of $A$.

\begin{remark}
The multiplicity of $\rhomin(A)$, as an algebraic eigenvalue,
coincides with the term rank
(i.e., the maximal number of nodes of a disjoint union of circuits)
of the critical graph of $A$. This follows from the arguments
of proof of Theorem~4.7 in~\cite{abg04b}. 
\end{remark}

\section{Majorization inequalities for valuations of eigenvalues}
\label{sec-min-plus-charac}
The inequalities that we shall establish involve the notion
of {\em weak majorization}, see~\cite{MAR}
for background.
\begin{definition}
Let $u,v\in \rmin^n$. Let $u_{(1)}\leq \cdots \leq u_{(n)}$
(resp.\ $v_{(1)}\leq \cdots \leq v_{(n)}$) denote the components
of $u$ (resp.\ $v$) in increasing order.
We say that $u$ is \new{weakly (super) majorized} by $v$,
and we write $u\weakm v$, if the following conditions hold:
\[  u_{(1)}\cdots  u_{(k)}
\geq  v_{(1)} \cdots   v_{(k)}  \quad \forall k=1, \ldots, n \enspace . \]
\end{definition}
The weak majorization relation is only defined
in~\cite{MAR} for vectors of $\R^n$. Here, it is convenient
to define this notion for vectors with infinite entries.
We used the min-plus notation for homogeneity with the rest 
of the paper.
The following lemma states a useful
monotonicity property of the map which associates to a
formal min-plus polynomial $P$ its sequence
of roots, $\corn{P}$. 
\begin{lemma}\label{minpoly-maj}
Let $P, Q\in \rmin[X]$ be two formal polynomials of degree $n$.
Then,
\begin{equation}
P\geq Q \mrm{ and } P_n=Q_n \implies \corn{P}\weakm \corn{Q}
\enspace .
\end{equation}
\end{lemma}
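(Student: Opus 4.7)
The plan is to reduce weak majorization to a coefficient-wise inequality between the Newton-polygon polynomials $\divex P$ and $\divex Q$. Since $\corn{P}=(p_1\leq\cdots\leq p_n)$ and $\corn{Q}=(q_1\leq\cdots\leq q_n)$ are already sorted nondecreasingly, the definition of $\weakm$ requires precisely that $p_1\cdots p_k\geq q_1\cdots q_k$ (in min-plus notation, i.e., $p_1+\cdots+p_k\geq q_1+\cdots+q_k$ in usual notation) for every $k=1,\ldots,n$.

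The first step is to apply Theorem~\ref{th21} to $\divex P$ and $\divex Q$, both of which are fixed points of the convex-hull operator. This yields $\divex{P}=\divex{P}_n\,(\iY\oplus p_1)\cdots(\iY\oplus p_n)$, so that $\divex{P}_{n-k}=\divex{P}_n\,p_1\cdots p_k$, and analogously for $Q$. Moreover, $n$ lies at the boundary of the effective domain of $\vex P$, so $(n,P_n)$ is an extreme point of $\epi P$; by the same extreme-point argument used in the proof of Lemma~\ref{carac-ci}, this forces $\divex{P}_n=P_n$ (and likewise $\divex{Q}_n=Q_n$). Combined with the hypothesis $P_n=Q_n$, this reduces the target inequality to $\divex{P}_{n-k}\geq\divex{Q}_{n-k}$ for every $k=1,\ldots,n$.

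The remaining step is monotonicity of the convex hull. Extending both formal polynomials to functions $\R\to\rminb$ via~\eqref{pasafunction}, the hypothesis $P\geq Q$ gives $P\geq Q$ pointwise on $\R$. Since $\vex Q$ is a convex minorant of $Q$, hence also of $P$, and since $\vex P$ is the largest such minorant, we obtain $\vex P\geq\vex Q$ pointwise. Restricting to $\N$ yields $\divex{P}_j\geq\divex{Q}_j$ for every $j$, which closes the argument.

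The only delicate bookkeeping involves the infinite value $\zero=+\infty$: the valuations of $P$ and $Q$ may differ, so some of the $p_i$ or $q_i$ can equal $\zero$. This causes no trouble, since $\divex P_j\geq\divex Q_j$ is trivially true for $j<\val P$ (where $\divex P_j=+\infty$), and the weak majorization relation is interpreted in $\rminb$ per the convention adopted just after its definition. I do not foresee a real obstacle; the lemma follows mechanically from Theorem~\ref{th21} and the monotonicity of the convex-hull operator.
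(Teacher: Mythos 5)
Your proof is correct and follows essentially the same route as the paper's: monotonicity of the convex-hull operator gives $\divex{P}\geq\divex{Q}$, the equality $\divex{P}_n=P_n=Q_n=\divex{Q}_n$ at the top-degree coefficient, and then equation~\eqref{corners0}/Theorem~\ref{th21} converts the coefficient-wise inequality $\divex{P}_{n-k}\geq\divex{Q}_{n-k}$ directly into $p_1\cdots p_k\geq q_1\cdots q_k$. The only cosmetic difference is that you spell out in more detail why $\divex{P}_n=P_n$ and discuss the $+\infty$ bookkeeping, both of which the paper leaves implicit.
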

\begin{proof}
{From} $P\geq Q$, we deduce $\divex{P}\geq \divex{Q}$.
Let $\corn{P}= (c_1(P)\leq\cdots \leq c_n(P))$
and 
$\corn{Q}=(c_1(Q)\leq\cdots\leq c_n(Q))$ denote the sequence of roots
of $P$ and $Q$, respectively.
Using $\divex{P}\geq \divex{Q}$, $\divex{P}_n=P_n=Q_n=\divex{Q}_n$ and 
\eqref{corners0}, we get
$c_1(P)\cdots c_k(P)=\divex{P}_{n-k}(\divex{P}_{n})^{-1}
\geq \divex{Q}_{n-k}(\divex{Q}_{n})^{-1}=c_1(Q)\cdots c_k(Q)$,
for all $k=1,\ldots , n$, that is $\corn{P}\prec^{\rm w} \corn{Q}$.
\end{proof}
Let $\nu$ be a (non-archimedean) valuation on a field $\K$,
i.e., a map $\nu: \K \to \mathbb{R}\cup\{+\infty\}$
satisfying the conditions~\eqref{e-intro} recalled in the introduction.
We shall think of the images of $\nu$ as elements
of the min-plus semifield, writing $\nu(ab)=\nu(a)\nu(b)$.
The main example of valuation considered here
is obtained by considering
the field of complex Puiseux series, with the valuation which
takes the smallest exponent of a series. 
Recall that this field consists of the series of the form
$ \sum_{k=K}^{\infty}  a_k \epsilon^{k/s}$ with $a_k \in \C$,
$K\in \Z$ and $s\in \N\setminus\{0\}$,
in which case the smallest exponent is equal to $K/s$
as soon as $a_K\neq 0$. 
The results of the present paper apply as well to
formal series or to series that are absolutely convergent for a sufficiently
small positive $\epsilon$.

The following proposition
formulates in terms of tropical roots a well known property usually stated
in terms of Newton polygons, see for instance~\cite[Exer.\ VI.4.11]{bourbakicomm}. It is a special case of a result proved in~\cite{kapranov}
for non-archimedean amoebas of hypersurfaces. 
We include a proof relying on Lemma~\ref{carac-ci}
for the convenience of the reader, since
we shall use the same argument in the sequel.
\begin{proposition}[See~{\cite[Th.~2.1.1]{kapranov}}]\label{prop-kap}
Let $\K$ be an algebraically closed field with a (non-archimedean) valuation $\nu$
and let $\sP=\sum_{k=0}^n\sP_k \iY^k\in \K[\iY]$, with $\sP_n=1$. Then, the 
valuations of the roots of $\sP$ (counted with multiplicities)
coincide with the roots of the min-plus polynomial $\nu(\sP):=\bigoplus_{k=0}^n\nu(\sP_k)\iY^k$.
\end{proposition}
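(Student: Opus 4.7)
The plan is to factor $\sP$ over the algebraic closure as $\sP=\prod_{i=1}^n(\iY-\alpha_i)$, set $\beta_i=\nu(\alpha_i)\in\rmin$, and re-index so that $\beta_{(1)}\leq\cdots\leq\beta_{(n)}$. I will then apply Lemma~\ref{carac-ci} to the formal min-plus polynomial $P=\nu(\sP)$ with the candidate root sequence $c_i=\beta_{(i)}$. Note that $\deg\nu(\sP)=n$ since $\nu(\sP_n)=\nu(1)=\unit\neq\zero$, so the hypotheses of the lemma apply.

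The first thing to check is the inequality $\nu(\sP)\geq(\iY\oplus\beta_{(1)})\cdots(\iY\oplus\beta_{(n)})$. Expanding the factorization of $\sP$ in terms of elementary symmetric polynomials gives $\sP_{n-k}=(-1)^k e_k(\alpha_1,\ldots,\alpha_n)$. The ultrametric inequality yields
\[
\nu(\sP_{n-k})\geq \min_{|S|=k}\sum_{i\in S}\nu(\alpha_i)=\beta_{(1)}+\cdots+\beta_{(k)},
\]
which, read in min-plus, is precisely the $(n-k)$-th coefficient of the candidate product. The second thing to verify is equality at the indices singled out in Lemma~\ref{carac-ci}, namely $i\in\{0,n\}$ and all $i$ with $\beta_{(i)}<\beta_{(i+1)}$. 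The cases $i=0$ and $i=n$ are immediate from $\sP_n=1$ and the multiplicativity of $\nu$ applied to $\prod_j\alpha_j$ (with the convention $\alpha_j=0\Leftrightarrow\beta_j=+\infty$). For $i$ with $\beta_{(i)}<\beta_{(i+1)}$, the key observation is that the unique $k$-subset of $\{1,\ldots,n\}$ minimizing $\sum_{j\in S}\nu(\alpha_j)$ is $S_i:=\{(1),\ldots,(i)\}$: any monomial of $e_i(\alpha)$ indexed by a different subset must include some $\alpha_j$ with $\nu(\alpha_j)\geq\beta_{(i+1)}$ in place of some $\alpha_{j'}$ with $\nu(\alpha_{j'})\leq\beta_{(i)}$, and the resulting valuation is strictly larger. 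Thus the ultrametric min is attained uniquely, and $\nu(\sP_{n-i})=\beta_{(1)}+\cdots+\beta_{(i)}$ as required.

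Lemma~\ref{carac-ci} now concludes that $\corn{\nu(\sP)}=(\beta_{(1)},\ldots,\beta_{(n)})$, giving exactly the tropical roots of $\nu(\sP)$. The one delicate point is the uniqueness of the minimizing monomial at a strict jump $\beta_{(i)}<\beta_{(i+1)}$, because this is what upgrades the ultrametric inequality to an equality; no further cancellation can occur since a single surviving term dominates. The case where some $\alpha_j$ are zero (i.e.\ $\beta_{(j)}=+\infty$) is harmless: a jump $\beta_{(i)}<\beta_{(i+1)}=+\infty$ forces $\beta_{(i)}\in\R$, so all terms in $S_i$ are finite and nonzero, while at $i=n$ the product $\prod_j\alpha_j$ vanishes and so does $\sum_j\beta_{(j)}=+\infty$, consistently.
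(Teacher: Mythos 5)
Your proof is correct and follows the paper's own argument essentially step for step: factor $\sP$ over $\K$, order the valuations of the roots, verify via the ultrametric inequality that $\nu(\sP)$ dominates the candidate product with equality at the jump indices and at $i\in\{0,n\}$, and invoke Lemma~\ref{carac-ci}. The only difference is cosmetic — you spell out the uniqueness of the minimizing $k$-subset and the zero-root case in slightly more detail than the paper does.
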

\begin{proof}
Let $\sY_1,\ldots,\sY_n$
denote the roots of $\sP$, ordered by nondecreasing valuation,
$c_i:=\nu(\sY_i)$, so that $c_1\leq \cdots \leq c_n$,
$Q:=\bigoplus_{k=0}^nc_1\dots c_k\iY^{n-k}$, and $P:=\nu(\sP)$. Observe that 
$P_n=Q_n=\unit$, and $Q=(\iY\oplus c_1)\dots (\iY\oplus c_n)$.
Since $\sP_{n-k}=(-1)^k\sum_{i_1<\cdots<i_k}\sY_{i_1}\cdots \sY_{i_k}$,
we get $\nu(\sP_{n-k})\geq \nu(\sY_1\cdots \sY_k)=c_1\cdots c_k$, and so $P\geq Q$, and $P_n=Q_n$. Moreover, 
if $c_k<c_{k+1}$ or $k=n$, $\sY_{1}\cdots \sY_{k}$ is the only
term in the sum $(-1)^k\sum_{i_1<\cdots<i_k}\sY_{i_1}\cdots \sY_{i_k}$,
having a minimal valuation, and so, $P_{n-k}=\nu(\sP_{n-k})=c_1\cdots c_k=P_nc_1\cdots c_k$.
Then, it follows from Lemma~\ref{carac-ci} that %
$R(P)= (c_1\leq \cdots \leq c_n)=(\nu(\sY_1),\cdots,\nu(\sY_n))$.
\end{proof}

We now establish majorization inequalities for the valuations
of the eigenvalues of matrices.

\begin{theorem}\label{th-major-eig}
Let $\K$ be an algebraically closed field with a (non-archimedean) valuation $\nu$. Let  $\sA=(\sA_{ij})\in \K^{n\times n}$. Then,
the sequence of valuations of the eigenvalues of $\sA$ (counted with multiplicities) is weakly majorized by the sequence of (algebraic) eigenvalues of the matrix
$A:=(\nu(\sA_{ij}))\in \rmin^{n\times n}$.
\end{theorem}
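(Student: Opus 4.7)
The plan is to reduce the theorem to a coefficient-wise comparison between the classical and tropical characteristic polynomials, and then invoke Proposition~\ref{prop-kap} and Lemma~\ref{minpoly-maj}.

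First, I would introduce the classical characteristic polynomial $\sP(\iY) = \det(\iY I - \sA) \in \K[\iY]$, which is monic of degree $n$ and whose roots (with multiplicities) are the eigenvalues of $\sA$. Expanding the determinant, the coefficient of $\iY^k$ is, up to sign,
\[
\sP_k = (-1)^{n-k}\!\!\sum_{\substack{S\subset[n]\\ |S|=n-k}}\sum_{\sigma\in\Sym_S}\sgn(\sigma)\prod_{i\in S}\sA_{i\sigma(i)}.
\]
Applying the valuation axioms~\eqref{e-intro}, the ultrametric triangle inequality gives, for each $k<n$,
\[
\nu(\sP_k)\;\geq\;\min_{|S|=n-k}\min_{\sigma\in\Sym_S}\sum_{i\in S}\nu(\sA_{i\sigma(i)})\;=\;\trm_{n-k}(A)\;=\;(P_A)_k,
\]
where $A=(\nu(\sA_{ij}))$ and $P_A=\perm(\iY I\oplus A)$ is the formal tropical characteristic polynomial introduced in Section~\ref{sec-tropeig}. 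Since $\sP$ is monic, $\nu(\sP_n)=\unit=(P_A)_n$ as well.

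Viewing $\nu(\sP)=\bigoplus_k \nu(\sP_k)\iY^k$ as a formal min-plus polynomial, the coefficient-wise inequalities above read $\nu(\sP)\geq P_A$ in $\rmin[\iY]$, with equal leading coefficients. By Lemma~\ref{minpoly-maj}, this yields the weak majorization
\[
\corn{\nu(\sP)}\;\weakm\;\corn{P_A}.
\]
By definition, $\corn{P_A}$ is the sequence of algebraic tropical eigenvalues of $A$, arranged in nondecreasing order.

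It remains to identify $\corn{\nu(\sP)}$ with the sequence of valuations of the eigenvalues of $\sA$. This is precisely the content of Proposition~\ref{prop-kap} applied to the monic polynomial $\sP\in\K[\iY]$, which says that the roots of the min-plus polynomial $\nu(\sP)$ coincide with the valuations of the roots of $\sP$, counted with multiplicities. Combining this identification with the majorization above yields the claim.

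I do not expect a genuine obstacle: the proof is essentially the combination of the ultrametric inequality applied term by term in the Leibniz expansion of the determinant, the already-established monotonicity Lemma~\ref{minpoly-maj} for the root map $P\mapsto \corn{P}$, and the Kapranov-type identification in Proposition~\ref{prop-kap}. The only point requiring minor care is keeping track of the indexing convention between the characteristic polynomial and the tropical traces $\trm_k$, ensuring that $(P_A)_k$ and $\nu(\sP_k)$ are compared at the same degree $k$.
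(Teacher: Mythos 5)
Your proof is correct and follows essentially the same route as the paper's: compare the characteristic polynomial coefficient-wise with the tropical characteristic polynomial via the ultrametric inequality, apply Lemma~\ref{minpoly-maj} to transfer the inequality to the roots, and identify the roots of $\nu(\sP)$ with the valuations of the eigenvalues via Proposition~\ref{prop-kap}.
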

\begin{proof}
Let $\sQ:=\det(\iY I-\sA)\in\K[\iY]$ be the characteristic polynomial of $\sA$, and let $P:=\perm (\iY I\oplus A)\in  \rmin[\iY]$ be the min-plus characteristic polynomial of $A$. Let $Q:=\nu(\sQ)$.  Observe that the coefficients of $\sQ$ are given by 
 $\sQ_k=(-1)^{n-k} \tr_{n-k} (\sA)$, for $k=0,\ldots , n-1$
and $\sQ_n=1$, where $\tr_{k}(\sA)$ is the $k$-th trace of $\sA$:
\begin{equation}\label{trusual}
\tr_{k}(\sA):=\sum_{J\subset\{1,\ldots , n\},\, \card{J}= k}\left(
\sum_{\sigma\in \Sym_J} \sgn (\sigma) \prod_{j\in J}  \sA_{j\sigma (j)} 
\right) \enspace .\end{equation}
Similarly, the coefficients of $P$ are given by
 $P_k=\trm_{n-k} (A)$, for $k=0,\ldots ,
 n-1$ and $P_n=\unit$, where $\trm_{k} (A)$ is the min-plus $k$-th trace of 
$A$~\eqref{trmin}.
It follows that $Q=\nu(\sQ)\geq P$, and $Q_n=P_n=0$. Hence,
by Lemma~\ref{minpoly-maj},
$R(Q)\prec^{\rm w} R(P)$. By Proposition~\ref{prop-kap},
$R(Q)$ coincides with the sequence of valuations of the eigenvalues
of $\sA$, which establishes the result.
\end{proof}
If the minimum in every expression~\eqref{trmin} is attained
by only one product, we have $\nu(\sQ)=P$ in the previous
proof, and so, the majorization inequality becomes
an equality. However, this condition is quite restrictive
(it requires each of a family of a combinatorial optimization
problem to have a unique solution). We shall see in the next section
that the same conclusion holds under milder assumptions.

\section{Large Deviation Type Asymptotics and Quasivaluations}
\label{sec-quasival}

The results of the previous section apply to the field
of complex Puiseux series. 
However, in some
problems of asymptotic analysis, we need to deal with complex
functions $f$ of a small positive parameter $\epsilon$ which may not have Puiseux series expansions, but which only have a ``large deviation'' type
asymptotics, meaning that the limit 
\begin{equation}\label{puis-1new}
\lim_{\eps\to 0} \frac{\log | f(\eps)|}{\log \eps}
\in \R\cup\{+\infty\}
\end{equation}
exists. Since the set of such functions is not a ring (it is 
not stable by sum), we introduce the 
larger set
$\cont$  of continuous functions $f$ defined on some interval
$(0,\eps_0)$ to $\C$ with $\eps_0>0$,
such that $|f(\eps)|\leq \eps^{-k}$ on $(0,\eps_0)$,
for some positive constant $k$.
Since all the properties that we will prove in the sequel will hold
on some neighborhoods of $0$, we shall rather use the ring of 
\new{germs} at $0$ of elements of $\cont$, which is obtained by quotienting
$\cont$ by the equivalence relation that identifies functions which coincide
on a neighborhood of $0$.
This ring of germs will be also denoted by $\cont$.
For any germ $f\in\cont$, we shall abusively denote by $f(\eps)$ or $f_\eps$
the value at $\eps$ of any representative of the germ $f$.
We shall make a similar abuse for vectors, matrices, polynomials whose
coefficients are germs. We call \new{exponent} of $f\in\cont$:
\begin{equation}\label{puis-1}
\dex{f}\bydef \liminf_{\eps\to 0} \frac{\log | f(\eps)|}{\log \eps}
\in \R\cup\{+\infty\}\enspace ,
\end{equation}
and denote by  $\cont\proper$ the subset of elements $f$ of $\cont$
having a large deviation type asymptotics, 
that is such that the liminf in the definition
of $\dex{f}$ is a limit. A convenient setting in tropical geometry,
along the lines of Alessandrini~\cite{alessandrini2013}, is to work
with functions that are definable in a o-minimal model with a polynomial growth.
Then, standard model theory arguments show that such functions have automatically large deviations type asymptotics, so that the present results apply in particular to this setting.

We have, for all $f,g\in \cont$,
\begin{subequations}
\label{defexp}
\begin{align}
\dex{f+g}\geq & \min( \dex{f}, \dex{g})\enspace,\label{puis1}\\
\dex{fg}\geq & \dex{f}+ \dex{g}\enspace,\label{puis2}
\end{align}
\end{subequations}
with 
\begin{gather}\label{equalitymin}
\dex{f+g}=\min( \dex{f}, \dex{g})\quad\text{if } \dex{f}\neq \dex{g}\enspace,
\end{gather}
and equality in~\eqref{puis2} if $f$ or $g$ belongs to $\cont\proper$.
An element $f\in \cont$ is invertible  if, and only if, there exists
a positive constant $k$ such that $|f(\eps)|\geq \eps^k$.
Then, $\dex{f}\neq \zero$, and the inverse of $f$ is the map 
$f^{-1}:\eps\mapsto f(\eps)^{-1}$. Moreover, we have $\dex{f^{-1}}\leq -\dex{f}$
with equality if, and only if,  $f\in\cont\proper$.
Thus, $f\mapsto \dex{f}$ is ``almost'' a valuation on the ring
$\cont$ (and thus almost a morphism $\cont\to\rmin$).

In the sequel, we shall say that a map $\dexo$ from 
a ring $\Ring$ to $\R\cup\{+\infty\}$ is a \new{quasi-valuation} if
it satisfies \eqref{defexp}, for all $f,g\in \Ring$, together with
\begin{equation}\label{equal0}
\dex{-1}=0\enspace .
\end{equation}
For any quasi-valuation, we define the set:
\begin{equation} \label{defacirc}
\Ring\proper:=\set{f\in \Ring}{ \dex{fg}=\dex{f} +\dex{g}\; \forall g\in\Ring}
\enspace .
\end{equation}
From the above remarks, the map $\dexo$ of~\eqref{puis-1}
is a quasi-valuation over the ring $\Ring=\cont$,
and one can easily show in that case 
that the subset $\Ring\proper$ coincides with the
set $\cont\proper$ defined above.
Another example of a quasi-valuation is the map 
\begin{equation}\label{puis-1}
\dex{f}\bydef \liminf_{\eps\to 0} \frac{\log \| f(\eps)\|}{\log \eps}
\in \R\cup\{+\infty\}\enspace ,
\end{equation}
on the ring $\Ring=\cont^{n\times n}$ of $n\times n$ matrices with entries in
$\cont$, where $\|\cdot\|$ is any matrix norm on $\C^{n\times n}$.
Indeed, there exists a constant $C$ such that $\|A B\|\leq C \|A \| \|B\|$,
for all $A,B\in\C^{n\times n}$. This property together with 
the sup-additivity of a norm imply~\eqref{defexp}.
The identity matrix is the unit of $\Ring$ and
since any constant matrix $A\in\C^{n\times n}$ satisfies $\dex{A}=0$, we
get~\eqref{equal0}.
However, the ring $\cont^{n\times n}$ is not commutative, so that the 
results of the end of the present section cannot be applied directly.

Most of the properties of the map $\dexo$ of~\eqref{puis-1}
can be transposed to the case of a general quasi-valuation, as follows.
Let us denote by  $\Ring^*$ the set of invertible elements of $\Ring$.
It is easy to see that if a map $\dexo$ from $\Ring$
to $\R\cup\{+\infty\}$ is not identically $+\infty$ and 
satisfies~\eqref{puis2}, then  $\dex{1}\leq 0$, and thus
 $f\in \Ring^* \Rightarrow\dex{f}\neq +\infty$.
The condition~\eqref{equal0} is equivalent to the
condition that $-1\in \Ring\proper$ and it
implies that $\dex{1}=0$,  $1\in \Ring\proper$, and 
$\dex{-g}=\dex{g}$ for all $g\in\Ring$.
Then, from the latter property, 
a quasi-valuation $\dexo$ satisfies necessarily~\eqref{equalitymin}.
Moreover, the set $\Ring\proper$ is necessarily a multiplicative 
submonoid of $\Ring$, 
the set $\Ring\proper\cap \Ring^*$  is the subgroup of $\Ring^*$ 
composed of the invertible elements $f$ of $\Ring$ such that
 $\dex{f^{-1}}=-\dex{f}$, 
and $\dexo$ is a multiplicative group morphism on it.

For any formal polynomial with coefficients
in a ring $\Ring$ with a quasi-valuation $\dexo$,
$\sP = \sum_{j=0}^{n} \sP_j \iY^j\in \Ring [\iY]$,
we define its quasivaluation similarly to its valuation,
see Proposition~\ref{prop-kap}:
\[ \dex{\sP}\bydef \bigoplus_{j=0}^n \dex{\sP_j} \iY^j\in \rmin[\iY]
\enspace .\]
Using the same proof, while replacing the valuation
by a quasi-valuation, we extend Proposition~\ref{prop-kap} 
and Theorem~\ref{th-major-eig} as follows.
These results hold in particular for the exponent application $\dexo$
defined on $\cont$, in which case Proposition~\ref{prop-kap-pre}
says that ``the leading exponents of the roots of a polynomial are
the min-plus roots of the polynomial of leading exponents''. 

\begin{proposition}\label{prop-kap-pre}
Let $\Ring$ be a commutative ring with a quasi-valuation $\dexo$, and let 
$\Ring\proper$ be defined by~\eqref{defacirc}.
Let $\sP=\sum_{k=0}^n\sP_k \iY^k\in \Ring[\iY]$, with $\sP_n=1$. 
Assume that $\sP$ has $n$ roots (counted with multiplicities) and that 
they all belong to $\Ring\proper$. Then, the 
images by $\dexo$ of the roots of $\sP$ (counted with multiplicities)
coincide with the roots of the min-plus polynomial $\dex{\sP}$.
\qed
\end{proposition}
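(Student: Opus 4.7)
The plan is to mimic the proof of Proposition~\ref{prop-kap} verbatim, replacing the valuation $\nu$ with the quasi-valuation $\dexo$ and using the hypothesis $\sY_i\in\Ring\proper$ at every step where multiplicativity of $\dexo$ was previously automatic. Let $\sY_1,\ldots,\sY_n$ be the roots of $\sP$, ordered so that $c_i:=\dex{\sY_i}$ is nondecreasing, and set $Q:=(\iY\oplus c_1)\cdots(\iY\oplus c_n)\in\rmin[\iY]$ and $P:=\dex{\sP}$. As in the valuation case, one has $P_n=Q_n=\unit$, so by Lemma~\ref{carac-ci} it suffices to verify that $P\geq Q$ and that $P_{n-k}=c_1\cdots c_k$ whenever $k=n$ or $c_k<c_{k+1}$.

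For the inequality $P\geq Q$, Vieta's formulas (which are valid in any commutative ring with $\sP_n=1$) give $\sP_{n-k}=(-1)^k\sum_{i_1<\cdots<i_k}\sY_{i_1}\cdots\sY_{i_k}$. Since $\Ring\proper$ is a multiplicative submonoid of $\Ring$ (as noted in the text just before the proposition), each product $\sY_{i_1}\cdots\sY_{i_k}$ lies in $\Ring\proper$ and satisfies $\dex{\sY_{i_1}\cdots\sY_{i_k}}=c_{i_1}+\cdots+c_{i_k}\geq c_1+\cdots+c_k$. Iterating the sub-additivity~\eqref{puis1} over the terms of the sum, and using $\dex{-1}=0$ from~\eqref{equal0} to discard the sign, we obtain $\dex{\sP_{n-k}}\geq c_1+\cdots+c_k$, i.e.\ $P_{n-k}\geq Q_{n-k}$ in min-plus notation.

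For the equality $P_{n-k}=c_1\cdots c_k$ when $k=n$ or $c_k<c_{k+1}$, observe that in this range the exponent $c_{i_1}+\cdots+c_{i_k}$ is minimized by the unique choice $(i_1,\ldots,i_k)=(1,\ldots,k)$, so $\sY_1\cdots\sY_k$ is the unique term of the Vieta sum of minimum $\dexo$-value. Applying~\eqref{equalitymin} (which the text justifies is a consequence of~\eqref{equal0} together with the axioms of a quasi-valuation) by induction on the number of terms of the sum, one concludes $\dex{\sP_{n-k}}=c_1+\cdots+c_k$, as required. Lemma~\ref{carac-ci} then yields $\corn{P}=(c_1,\ldots,c_n)$.

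The main thing to check carefully is the second step: that~\eqref{equalitymin} is strong enough to propagate through a sum of more than two terms in which only one term has strictly minimal $\dexo$-value. This is a routine induction, but it is precisely the place where the weaker quasi-valuation axioms could have failed; the assumption $\sY_i\in\Ring\proper$ is what restores the multiplicative behaviour needed to identify the unique minimizing term of the Vieta sum.
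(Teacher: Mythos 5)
Your proof is correct and matches the paper's intended argument: the paper itself dispatches Proposition~\ref{prop-kap-pre} with the one-line remark ``using the same proof, while replacing the valuation by a quasi-valuation,'' and you have simply unfolded that remark, supplying exactly the verifications (multiplicativity of $\dexo$ on $\Ring\proper$ and its closure under products, sign removal via $\dex{-g}=\dex{g}$, and the inductive use of~\eqref{equalitymin} to isolate the unique minimizing Vieta term) that make the substitution legitimate.
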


\begin{theorem}\label{th-major-eig-pre}
Let $\Ring$ be a commutative ring with a quasi-valuation $\dexo$, and let 
$\Ring\proper$ be defined by~\eqref{defacirc}.
Let  $\sA=(\sA_{ij})\in \Ring^{n\times n}$.
Assume that $\sA$ has $n$ algebraic eigenvalues (counted with multiplicities)
and that  they all belong to $\Ring\proper$.  
Denote by $\Lambda=(\Lambda_1\leq \cdots \leq \Lambda_n)$ 
the sequence of their images by $\dexo$ (counted with multiplicities). Let 
$\Gamma=(\gamma_1\leq\cdots \leq \gamma_n)$ be the sequence of 
min-plus algebraic eigenvalues of $\dex{\sA}:=(\dex{\sA_{ij}})\in \rmin^{n\times n}$.
Then, $\Lambda$ is weakly majorized by $\Gamma$.
\qed
\end{theorem}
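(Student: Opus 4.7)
The plan is to reproduce the proof of Theorem~\ref{th-major-eig} almost verbatim, replacing the valuation $\nu$ by the quasi-valuation $\dexo$ and Proposition~\ref{prop-kap} by Proposition~\ref{prop-kap-pre}. Concretely, I would form the usual characteristic polynomial $\sQ := \det(\iY I - \sA) \in \Ring[\iY]$, which is legitimate because $\Ring$ is commutative. Its coefficients are $\sQ_k = (-1)^{n-k}\tr_{n-k}(\sA)$ for $k<n$ and $\sQ_n = 1$, with $\tr_k(\sA)$ as in~\eqref{trusual}. Likewise, the min-plus characteristic polynomial of $\dex{\sA}$ is $P := \perm(\iY I \oplus \dex{\sA})\in\rmin[\iY]$, with coefficients $P_k = \trm_{n-k}(\dex{\sA})$ for $k<n$ and $P_n = \unit$, as in~\eqref{trmin}.

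The next step is to compare $\dex{\sQ}$ and $P$ coefficient-wise. Because $\tr_{n-k}(\sA)$ is a signed sum of products of entries of $\sA$, and $\trm_{n-k}(\dex{\sA})$ is the corresponding tropical expression, the combination of $\dex{f+g}\ge\min(\dex{f},\dex{g})$, $\dex{fg}\ge\dex{f}+\dex{g}$, and the identity $\dex{-g}=\dex{g}$ (which follows from $\dex{-1}=0$ by applying~\eqref{puis2} and~\eqref{equal0} both ways) yields $\dex{\sQ_k}\ge \trm_{n-k}(\dex{\sA}) = P_k$ for every $k$. Moreover $\dex{\sQ_n}=\dex{1}=0=P_n$. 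This is exactly the point where using a quasi-valuation rather than a valuation costs us nothing, since only inequalities are needed.

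Since $\dex{\sQ}\ge P$ as formal polynomials of degree $n$ with equal leading coefficients, Lemma~\ref{minpoly-maj} gives $\corn{\dex{\sQ}}\weakm\corn{P}=\Gamma$. Finally, the assumption that the $n$ eigenvalues of $\sA$ (the roots of $\sQ$) all belong to $\Ring\proper$ is precisely the hypothesis needed to apply Proposition~\ref{prop-kap-pre} to $\sQ$: it tells us that $\corn{\dex{\sQ}}$ coincides with the sequence $\Lambda$ of images by $\dexo$ of those eigenvalues. Combining the two displays gives $\Lambda\weakm \Gamma$.

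Strictly speaking there is no real obstacle in this proof—it is a straightforward transposition of the argument of Theorem~\ref{th-major-eig}. The one point that deserves a moment's care is the verification that the quasi-valuation axioms \eqref{defexp}--\eqref{equal0} really do suffice for the coefficient-wise bound $\dex{\sQ}\ge P$, even though $\dex{fg}\ge \dex{f}+\dex{g}$ may be strict; but since one only needs an inequality in the target direction, this causes no issue. The hypotheses that the eigenvalues exist and lie in $\Ring\proper$ are used only to invoke Proposition~\ref{prop-kap-pre}; without them $\corn{\dex{\sQ}}$ would not in general be identifiable with $\Lambda$.
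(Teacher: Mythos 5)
Your proof is correct and is exactly the argument the paper intends: the paper states that Theorem~\ref{th-major-eig-pre} follows by ``using the same proof, while replacing the valuation by a quasi-valuation,'' and your write-up carries this out faithfully, including the needed observation that $\dex{-g}=\dex{g}$ and that only inequalities are required so the quasi-valuation axioms suffice.
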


The following corollary will be used in Section~\ref{sec-gen}.
\begin{corollary}\label{cor-major-eig-pre}
Let $\Ring$ be a commutative ring with a quasi-valuation $\dexo$, and let 
$\Ring\proper$ be defined by~\eqref{defacirc}.
Let  $\sA=(\sA_{ij})\in \Ring^{n\times n}$.
Assume that $\sA$ has $n$ algebraic eigenvalues (counted with multiplicities)
and that  they all belong to $\Ring\proper$.  
Denote by $\Lambda=(\Lambda_1\leq \cdots \leq \Lambda_n)$ 
the sequence of their images by $\dexo$ (counted with multiplicities). Let 
$A\in \rmin^{n\times n}$ be such that $\dex{\sA_{ij}}\geq A_{ij}$,
for all $i,j\in [n]$, and let 
$\Gamma=(\gamma_1\leq\cdots \leq \gamma_n)$ be the sequence of 
min-plus algebraic eigenvalues of $A$.
Then, $\Lambda$ is weakly majorized by $\Gamma$.
\end{corollary}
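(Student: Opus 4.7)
The plan is to derive the corollary by combining Theorem~\ref{th-major-eig-pre} with a monotonicity argument on min-plus characteristic polynomials, using the transitivity of weak majorization. The starting point is to apply Theorem~\ref{th-major-eig-pre} directly to $\sA$, which yields $\Lambda\weakm \Gamma'$, where $\Gamma'=(\gamma'_1\leq\cdots\leq\gamma'_n)$ denotes the sequence of min-plus algebraic eigenvalues of the matrix $\dex{\sA}=(\dex{\sA_{ij}})$. The remaining task is then to compare $\Gamma'$ to $\Gamma$.

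Next, I would pass from the entry-wise inequality $\dex{\sA_{ij}}\geq A_{ij}$ to an inequality between formal characteristic polynomials. Since the min-plus permanent is monotone in its arguments (each term $|\sigma|_B=\sum_i B_{i\sigma(i)}$ is clearly nondecreasing in the $B_{ij}$, and the minimum over $\sigma\in\Sym_J$ preserves this), we get $\trm_k(\dex{\sA})\geq \trm_k(A)$ for every $k$, and hence $P_{\dex{\sA}}\geq P_A$ as formal polynomials of $\rmin[\iY]$. Both polynomials have degree $n$ with leading coefficient $\unit$. Lemma~\ref{minpoly-maj} then delivers $\corn{P_{\dex{\sA}}}\weakm \corn{P_A}$, which is exactly $\Gamma'\weakm \Gamma$.

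Finally, I would invoke the transitivity of $\weakm$: from $\Lambda\weakm \Gamma'$ and $\Gamma'\weakm \Gamma$, the componentwise inequalities $\Lambda_{(1)}\cdots \Lambda_{(k)}\geq \Gamma'_{(1)}\cdots \Gamma'_{(k)}\geq \Gamma_{(1)}\cdots \Gamma_{(k)}$ (in the ordinary real order, reading $\otimes$ as $+$) give $\Lambda\weakm \Gamma$, as desired. There is essentially no obstacle: the argument is a two-step reduction (perturb the valuation up to $\dex{\sA}$, then observe that the min-plus characteristic polynomial is monotone). The only point requiring a small verification is the permanent monotonicity, which is immediate from the definition of $\trm_k$ in~\eqref{trmin}, and the transitivity of weak majorization, which follows directly from the definition.
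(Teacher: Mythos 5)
Your proof is correct and follows essentially the same route as the paper: apply Theorem~\ref{th-major-eig-pre} to get $\Lambda\weakm\Gamma'$, use monotonicity of the min-plus permanent together with Lemma~\ref{minpoly-maj} to get $\Gamma'\weakm\Gamma$, and conclude by transitivity of $\weakm$.
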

\begin{proof}
Theorem~\ref{th-major-eig-pre} implies 
that $\Lambda \weakm\Gamma'$, where $\Gamma'$
is the sequence of min-plus algebraic eigenvalues of 
$\dex{\sA}$.
Define the min-plus polynomials $P:=\perm \dex{\sA}$ and
$Q:=\perm A$. Then, by definition of min-plus eigenvalues,
$\Gamma'$ is the sequence $\corn{P}$ of min-plus roots of $P$,
and $\Gamma$ is the sequence $\corn{Q}$ of min-plus roots of $Q$.
Since $\dex{\sA}_{ij}=\dex{\sA_{ij}}\geq A_{ij}$,
for all $i,j\in [n]$, we get that $P\geq Q$, and since $P_n=Q_n$,
Lemma~\ref{minpoly-maj} shows that $\corn{P}\weakm \corn{Q}$. Hence
$\lambda \weakm\Gamma'\weakm\Gamma$, which finishes the proof.
\end{proof}

\section{A Preliminary: Newton-Puiseux Theorem with Partial Information on Valuations}
\label{sec-newton}

If $\mathcal{A}$ is a matrix with entries in the field of Puiseux series,
the knowledge of the valuations of the entries of $\mathcal{A}$ is not
enough to determine the valuations of the coefficients
of the characteristic polynomial of $\mathcal{A}$, owing to potential cancellation. 
Therefore, in order to find conditions under which the majorization inequality in Theorem~\ref{th-major-eig-pre}  becomes an equality, we need to state a variant of the classical Newton Puiseux theorem, in which only a partial information on the valuations
of the coefficients of a polynomial, or equivalently, 
an ``external approximation'' of the Newton polytope, 
is available. The idea that such an information on the polytope is enough
to infer a partial information on roots is classical:
in the case of archimedean valuations, 
it already appeared for instance in the work of Montel~\cite{montel}.

We shall say that $f\in \cont$ has a 
\new{first order asymptotics} 
if
\begin{align}\label{puis4}
f(\eps)\sim a \eps^A, \qquad \mrm{when } \eps\to 0^+\enspace ,
\end{align}
with either $A\in \R$ and $a\in \C\setminus\{0 \}$,
or $A=+\infty$ and $a\in \C$. 
In the first case,~\eqref{puis4} means that
$\lim_{\eps\to 0} \eps^{-A}f(\eps) =a$, in the second case,~\eqref{puis4} 
means that $f=0$ (in a neighborhood of $0$).
Such asymptotic behaviors arise when considering
precise large deviations. We have:
\begin{align}
f(\eps)\sim a\eps^A & \implies  \dex{f}=A \text{ and } f\in \cont\proper\enspace.\label{puis0}
\end{align}
We shall also need a relation slightly weaker 
than $\sim$. If $f\in \cont$, $a\in \C$ and $A\in \rmin$, we write
\begin{equation}
f(\eps) \simeq a\eps^A
\label{e-f}
\end{equation}
if $f(\eps) = a\eps^A+o(\epsilon^A)$.
If $A\in\R$, this means that $\lim_{\eps\to 0} \eps^{-A}f(\eps) =a$.
If $A=+\infty$, this means by convention that $f=0$.
If $a\neq 0$ or $A=+\infty$, 
then $f(\eps) \simeq a\eps^A$ if, and only if, $f(\eps) \sim a\eps^A$
and in that case $\dex f =A$.
In general,
\begin{align}
f(\eps)\simeq a\eps^A & \implies  \dex{f}\geq A \enspace.\label{puis0-1}
\end{align}
Conversely, $\dex{f}>A\implies f(\eps)\simeq 0 \eps^A$. Of course, 
in~\eqref{e-f}, $a\eps^A$ must be viewed as a formal
expression, for the relation to be meaningful when $a=0$ and $A\in\R$.
In~\eqref{puis0}, however, $a\eps^A$ can  be viewed either as a formal
expression or as an element of $\cont$.

The following results give conditions under which some or all roots
of a polynomial with coefficients in $\cont$ have first order asymptotics,
hence are elements of $\cont\proper$, which allow one in particular to apply
the results of Section~\ref{sec-quasival}.
Although stated for polynomials with coefficients in $\cont$,
they are already useful in the case of polynomials with coefficients
in the set of Puiseux series, for which some of the valuations are not known.
lts 
We shall use in particular these results in the case of
characteristic polynomials.

\begin{theorem}[Newton-Puiseux theorem with partial information on the valuations] 
\label{th3-local}
Let $\sP= \sum_{j=0}^{n} \sP_j\iY^j\in \cont [\iY]$.
Assume that there exist $p=\sum_{j=0}^{n} p_j\iY^j\in \C [\iY]$
and $P=\bigoplus_{j=0}^{n} P_j\iY^j\in \rmin [\iY]$ satisfying
$\sP_j(\eps)\simeq p_j \eps^{P_j}$, $j=0,\ldots , n$.
Let $c\in\R$ be a finite root of $P$ with multiplicity $m$ and
assume that the polynomial
\begin{equation}\label{defpc}
p^{(c)}=\sum_{\scriptstyle 0\leq j\leq n\atop\scriptstyle \eval{P}(c)=P_j c^j} p_j\iY^j\in \C[\iY]
\enspace
\end{equation}
is not identically zero.
Let $y_1,\ldots, y_{\ell}$ denote its non-zero roots
 (counted with multiplicities). 
Then, $\ell \leq m$, and 
there exist $\ell$ roots
of $\sP$  (counted with multiplicities),
 $\sY_1,\ldots, \sY_\ell\in \cont$, having first order asymptotics of the form
$\sY_i\sim y_i \eps^c$.
Moreover if  $v=\val p^{(c)}>0$, 
then $v\geq m'$,  where $m'$  is the sum of the multiplicities of all
the roots of $P$ greater than $c$ ($+\infty$ comprised), and there exist
$v$ roots of $\sP$ (counted with multiplicities) 
$\sY_{1+\ell},\ldots, \sY_{v+\ell}\in \cont$, such that
 $\sY_i\simeq 0 \eps^c$, for $1\leq i-\ell\leq v$.
Finally, the remaining $n-v-\ell$ roots $\sY$ of  $\sP$  are such that 
$\sY \eps^{-c}$ tends to infinity when $\eps$ goes to $0$,
and their number satifies $n-v-\ell\geq n-m-m'$.
\end{theorem}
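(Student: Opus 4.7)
The plan is to rescale $\iY = \eps^c Z$, study the resulting polynomial in $Z$ by continuity of complex roots, and then pull back. Define the rescaled polynomial
\[
\tilde{\sP}_\eps(Z) := \eps^{-\eval{P}(c)} \sP_\eps(\eps^c Z) = \sum_{j=0}^n \sP_j(\eps)\, \eps^{cj - \eval{P}(c)} Z^j .
\]
Using $\sP_j(\eps) \simeq p_j \eps^{P_j}$ together with $\eval{P}(c) = \min_j(P_j + cj)$, I would first verify that as $\eps \to 0$ the coefficient of $Z^j$ in $\tilde{\sP}_\eps$ tends to $p_j$ when $P_j + cj = \eval{P}(c)$ and to $0$ otherwise. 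Hence $\tilde{\sP}_\eps \to p^{(c)}$ coefficient-wise.

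Second, I would extract the combinatorial bounds from Corollary~\ref{carac-mul}: since $\eval{P}(c) = P_{m'} c^{m'} = P_{m+m'} c^{m+m'}$ while $\eval{P}(c) < P_i c^i$ for $i < m'$ or $i > m + m'$, only indices $j \in [m',\, m+m']$ can contribute a nonzero coefficient to $p^{(c)}$. This forces $\val p^{(c)} \geq m'$ (so $v \geq m'$ whenever $v > 0$) and $\deg p^{(c)} \leq m + m'$, whence $\ell = \deg p^{(c)} - v \leq m$ and $v + \ell = \deg p^{(c)} \leq m + m'$; equivalently $n - v - \ell \geq n - m - m'$, which proves all the counting statements.

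The core analytic step is to convert coefficient-wise convergence $\tilde{\sP}_\eps \to p^{(c)}$ into precise control of individual roots. By a standard Rouché argument, for any sufficiently small $r > 0$ and all $\eps$ sufficiently small, $\tilde{\sP}_\eps$ has exactly $\mu_i$ roots in the disk $D(y_i, r)$ around each nonzero root $y_i$ of $p^{(c)}$ (with multiplicity $\mu_i$) and exactly $v$ roots in $D(0, r)$, accounting for $\ell + v = \deg p^{(c)}$ roots in total. The remaining $n - v - \ell$ roots of $\tilde{\sP}_\eps$ (using that $\sP_\eps$ has degree exactly $n$ for $\eps$ in some small interval, since $\sP_n$ is a nonzero germ) cannot accumulate in any fixed compact set of $\C$, and therefore must tend to $\infty$. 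Substituting back $\sY = \eps^c Z$ then produces the three groups of roots of $\sP$ described in the statement: $\sY_i \sim y_i \eps^c$ for the first group, $\sY_i = o(\eps^c)$ (i.e.\ $\sY_i \simeq 0\,\eps^c$) for the second, and $|\sY| \eps^{-c} \to \infty$ for the third. To realize the $\sY_i$ as germs in $\cont$, I would invoke the classical continuous selection of roots of a polynomial with continuous coefficients on $(0, \eps_0)$; Cauchy's bound on moduli of roots furnishes the polynomial growth needed to lie in $\cont$.

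The main obstacle is this Rouché/continuity step together with the continuous labeling in the presence of possibly multiple roots of $p^{(c)}$, combined with the fact that the leading coefficient of $\tilde{\sP}_\eps$ in $Z$ degenerates to $0$ in the limit (which is exactly what produces the $n - \deg p^{(c)}$ roots escaping to $\infty$ and forces one to argue on a large disk rather than from a degree-equality of the limiting polynomial). Once those three families of roots are correctly isolated, the rest is bookkeeping governed by the identity $\deg p^{(c)} = v + \ell$ and the bounds $m' \leq v$, $\deg p^{(c)} \leq m + m'$ already extracted from Corollary~\ref{carac-mul}.
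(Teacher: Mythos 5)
Your proposal follows essentially the same route as the paper: rescale $\iY = \eps^c Z$ and normalize by $\eps^{-\eval{P}(c)}$ so the rescaled polynomial converges coefficient-wise to $p^{(c)}$, then use an argument-principle/Rouché argument (which the paper packages as Lemma~\ref{cont-roots}) to extract continuous branches near each finite root while the remaining roots escape to infinity, and finally obtain the counting inequalities from Corollary~\ref{carac-mul} exactly as you describe. The only cosmetic difference is that the paper delegates the Rouché-plus-continuous-selection step to Lemma~\ref{cont-roots} (proved via the Cauchy index theorem and a selection theorem for unordered tuples, as in Kato), whereas you rederive that content inline; the Cauchy bound you mention is not actually needed for the finite branches, since they stay bounded by construction.
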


Recall that to $P$ is associated the polynomial
function $\eval{P}$ and the
convex formal polynomial $\divex{P}$, as in Section~\ref{sec-minpol}.

\begin{theorem}[Newton-Puiseux theorem with partial information on the valuations, continued]
\label{th3}
Let $\sP=\sum_{j=0}^{n} \sP_j\iY^j\in \cont [\iY]$,
such that $\sP_n=1$.
The following assertions are equivalent:
\begin{enumerate}
\item\label{th3-1} There exist $\sY_1,\ldots, \sY_n\in \cont$
such that $\sY_1(\eps),\ldots, \sY_n(\eps)$
are the roots of $\sP(\eps)=\sum_{j=0}^{n} \sP_j(\eps)\iY^j$
 counted with multiplicities, and
$\sY_1,\ldots, \sY_n$ have first order  asymptotics,
$\sY_j(\eps)\sim y_j \eps^{Y_j}$ with $Y_1\leq \cdots \leq Y_n$;
\item\label{th3-2} There exist $p=\sum_{j=0}^{n} p_j\iY^j\in \C [\iY]$
and $P=\bigoplus_{j=0}^{n} P_j\iY^j\in \rmin [\iY]$ satisfying
$\sP_j(\eps)\simeq p_j \eps^{P_j}$, $j=0,\ldots , n$,
with $p_n=1$, $P_n=\unit$, $p_0\neq 0$ or $P_0=\zero$, and 
$p_{n-i}\neq 0$ for all $i\in\{1,\ldots, n-1\}$ such that 
$c_i<c_{i+1}$, where $(c_1\leq \cdots \leq c_n)=\corn{P}$.
\end{enumerate}
When these assertions hold, we have 
$\dex{\sP}\geq P$, $\divex{\dex{\sP}}=\divex{P}$, and
$\corn{\dex{\sP}}=\corn{P}=(c_1\leq \cdots \leq c_n)=
(Y_1\leq \cdots \leq Y_n)$.
Moreover, if $c\in\R$ is a root of $P$ with multiplicity $m$ and
$c_{i+1}=\cdots =c_{i+m}=c$, then $y_{i+1},\ldots, y_{i+m}$ are precisely
the non-zero roots of the polynomial $p^{(c)}$ of~\eqref{defpc},
counted with multiplicities.
\end{theorem}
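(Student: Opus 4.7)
The plan is to establish (1)$\Leftrightarrow$(2) first and then derive the concluding equalities, with Theorem~\ref{th3-local} and Corollary~\ref{carac-mul} as the two main inputs. For (1)$\Rightarrow$(2), I would use the elementary symmetric formula $\sP_{n-k}=(-1)^k e_k(\sY_1,\ldots,\sY_n)$ (valid since $\sP_n=1$) and insert the asymptotics $\sY_i\sim y_i\eps^{Y_i}$. This naturally defines $P_{n-k}:=Y_1+\cdots+Y_k$ (the smallest possible exponent of a $k$-fold product, since $Y_1\leq\cdots\leq Y_n$) and $p_{n-k}$ as the signed sum of the products $y_{i_1}\cdots y_{i_k}$ over the subsets achieving this minimum, so that $\sP_{n-k}\simeq p_{n-k}\eps^{P_{n-k}}$. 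The conditions $p_n=1$, $P_n=\unit$ and ``$p_0\neq 0$ or $P_0=\zero$'' follow at once. At an index $i\in\{1,\ldots,n-1\}$ with $c_i=Y_i<Y_{i+1}=c_{i+1}$, the subset $\{1,\ldots,i\}$ is the unique minimizer, so $p_{n-i}=\pm y_1\cdots y_i\neq 0$, since each $y_j$ for $j\leq i$ is the leading coefficient of a root with $Y_j\leq Y_i<+\infty$, hence nonzero.

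For (2)$\Rightarrow$(1), enumerate the distinct finite roots $c^{(1)}<\cdots<c^{(r)}$ of $P$ with multiplicities $m_k$, and let $m_\infty:=\val P$ be the multiplicity of the root $\zero$. The crux is to apply Theorem~\ref{th3-local} at each $c^{(k)}$. By Corollary~\ref{carac-mul}, the support of the polynomial $p^{(c^{(k)})}$ of~\eqref{defpc} lies in $[m'_k, m_k+m'_k]$, where $m'_k$ is the sum of multiplicities of roots of $P$ strictly greater than $c^{(k)}$ ($+\infty$ included), and the two endpoints correspond precisely to slope-change indices of $\vex P$. Condition~(2) then gives $p_{m'_k}\neq 0$ and $p_{m_k+m'_k}\neq 0$ (the boundary cases $i=0$ and $i=n$ being covered by $\sP_n=1$ and the ``$p_0\neq 0$ or $P_0=\zero$'' clause), so $p^{(c^{(k)})}$ has degree exactly $m_k+m'_k$ and valuation exactly $m'_k$. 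Hence in Theorem~\ref{th3-local} one has $\ell=m_k$ and $v=m'_k$, yielding $m_k$ roots of $\sP$ with first-order asymptotics $\sY\sim y\eps^{c^{(k)}}$, one per nonzero root of $p^{(c^{(k)})}$. If moreover $m_\infty>0$, then $\sP_j=0$ for $j<m_\infty$ while $\sP_{m_\infty}\not\equiv 0$ (since $p_{m_\infty}\neq 0$ by (2) at $i=n-m_\infty$), so $\iY^{m_\infty}$ divides $\sP$ exactly and contributes $m_\infty$ roots equal to zero with $Y=+\infty$. These families of roots are disjoint (they have distinct valuations) and their total cardinality equals $\sum_k m_k + m_\infty = n$, accounting for all roots of $\sP$.

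For the concluding equalities, $\sP_j\simeq p_j\eps^{P_j}$ yields $\dex{\sP}\geq P$, with equality at every slope-change index of $\vex P$ (where $p_j\neq 0$ by (2)); combined with $P_j=\divex{P}_j$ at these indices, given by Lemma~\ref{carac-ci}, the convex piecewise-affine maps $\vex\dex{\sP}$ and $\vex P$ agree at the extreme points of $\epi\vex P$, hence everywhere on $[\val P,n]$. This gives $\divex{\dex{\sP}}=\divex{P}$ and $\corn{\dex{\sP}}=\corn{P}=(c_1,\ldots,c_n)$. Under (1), the first-order asymptotics place each $\sY_i$ in $\cont\proper$ by~\eqref{puis0}, so Proposition~\ref{prop-kap-pre} identifies $\corn{\dex{\sP}}$ with $(Y_1,\ldots,Y_n)$, forcing $Y_j=c_j$. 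The final identification of $y_{i+1},\ldots,y_{i+m}$ at a root $c$ of multiplicity $m$ with the nonzero roots of $p^{(c)}$ is the direct output of Theorem~\ref{th3-local} at $c$. The main obstacle is the bookkeeping in (2)$\Rightarrow$(1): one must carefully translate condition (2) into the exact shape of each $p^{(c^{(k)})}$ via Corollary~\ref{carac-mul}, identify the correct indices where slope changes of $\vex P$ occur, and then verify that the root counts produced by Theorem~\ref{th3-local} across all distinct $c^{(k)}$ plus the zero roots add up to $n$ without double-counting, which ultimately rests on the elementary fact that roots with different valuations are automatically distinct.
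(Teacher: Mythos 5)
Your proof is correct and follows essentially the same route as the paper's: $(1)\Rightarrow(2)$ via elementary symmetric functions and $(2)\Rightarrow(1)$ by applying Theorem~\ref{th3-local} at each finite root of $P$ (with Corollary~\ref{carac-mul} pinning down $\val p^{(c)}$ and $\deg p^{(c)}$), treating the zero root separately; the only cosmetic differences are that you invoke Proposition~\ref{prop-kap-pre} to conclude $(Y_1,\ldots,Y_n)=\corn{\dex\sP}$ whereas the paper extracts $Y_j=c_j$ directly from the construction $P=Q$ in the $(1)\Rightarrow(2)$ step, and you phrase the $\divex{\dex\sP}=\divex P$ step via a convexity/extreme-point argument where the paper simply rechecks the hypotheses of Lemma~\ref{carac-ci} for $\dex\sP$.
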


Theorem~\ref{th3} is a ``precise
large deviation'' version of the Newton-Puiseux theorem:
we assume only the existence of asymptotic equivalents
for the coefficients of $\sP$,
and derive the existence of asymptotic equivalents
for the branches of $\sP(\epsilon,y)=0$.
The Newton-Puiseux algorithm is sometimes presented
for asymptotic expansions, as in~\cite{dieu}.
The interest of the statements of Theorems~\ref{th3-local}
and~\ref{th3}, is to show that if some
coefficients are known to be negligible,
the asymptotics of the roots is determined only from the 
asymptotics of those coefficients $\mathcal{P}_i$ such that
$(i,P_i)$ is an exposed
point of the epigraph of $\divex{P}$.

\begin{example}
Consider $\sP=\iY^3 +\eps^5 \iY^2 -\eps^6 \iY + \eps^{13}$.
Then, $\sP$ is a polynomial over the field $\K$
of complex Puiseux series, hence the roots of $\sP$ are 
elements of $\K$.
Moreover, the min-plus polynomial $P=\nu(\sP)
= \iY^3 \oplus 5\iY^2 \oplus 6\iY + 13$
is the one of Figure~\ref{minpoly},
hence its roots are $c_1=c_2=3$ and $c_3=7$.
In that case, Proposition~\ref{prop-kap} says that
the valuations of roots of $\sP$ coincide with the roots of $P$.

If now  $\sP=\iY^3 +\frac{\eps^3}{\log \eps} \iY^2 - \eps^6 \iY+ \eps^{13}$,
the coefficient $\sP_2$ of $\sP$ is not in  $\K$.
Moreover, although all coefficients of $\sP$ belong to $\cont$
and even $\cont\proper$,
$\sP_2$ does not have a first order asymptotics.
However, $\sP_j(\eps)\simeq p_j \eps^{P_j}$, $j=0,\ldots , 3$,
with $P=\iY^3 \oplus 3\iY^2 \oplus 6\iY \oplus 13$
and $p=\iY^3 - \iY+ 1\in \C [\iY]$.
The min-plus polynomial $P$ has same roots as the one defined above:
$c_1=c_2=3<c_3=7$ and since $p_0\neq 0$, $p_1\neq 0$, $p_3=1$ and $P_3=\unit$,
Assertion~\eqref{th3-2} of Theorem~\ref{th3} holds.
Hence, by Theorem~\ref{th3},  the roots of $\sP$ 
form  $3$ continuous branches around $0$:
$\sY_1,\ldots, \sY_3\in \cont$ with first order  asymptotics,
and respective exponents $c_1=c_2=3<c_3=7$.

In both cases above,  Theorem~\ref{th3} gives the additional
information that the roots $\sY_1,\ldots, \sY_3$ of $\sP$ 
satisfy $\sY_j(\eps)\sim y_j \eps^{c_j},\; j=1,\ldots,3$,
where $y_1,y_2$ are the non-zero roots of $p^{(3)}=\iY^3 - \iY$,
and $y_3$ is the non-zero root of $p^{(7)}= - \iY+ 1$.
This gives for instance $y_1=1$, $y_2=-1$ and $y_3=1$,
so that $\sY_1\sim \eps^3$, $\sY^2\sim-\eps^3$
and $\sY_3\sim \eps^7$. 
\end{example}

In order to prove Theorems~\ref{th3-local} and \ref{th3},
we need the following standard result.

\begin{lemma}\label{cont-roots}
Let $\sQ(\eps,\iY)=\sum_{i=0}^n \sQ_j(\eps) \iY^j$, where the $\sQ_j$ are 
continuous functions of $\eps\in [0,\eps_0)$,
assume $\sQ(0,\cdot)\neq 0$ and let $d=\deg \sQ(0,\cdot)\geq 0$.
Then, for any open ball $B$ containing the roots of $\sQ(0,\cdot)$,
there are $d$ continuous branches $\sZ_1,\ldots ,\sZ_d$ defined in 
some interval $[0,\eps_1)$, with $0<\eps_1\leq\eps_0$, such that  
$\sZ_1(\eps),\ldots ,\sZ_d(\eps)$ are exactly the roots of 
$\sQ(\eps,\cdot)$ in $B$ counted with multiplicities.
Moreover, the roots of $\sQ(\eps,\cdot)$ that are outside $B$
tend to infinity when $\eps$ goes to $0$.
\end{lemma}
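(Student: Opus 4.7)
The plan is to combine three classical ingredients: Rouch\'e's theorem for counting roots inside $B$, the residue theorem for producing a monic degree-$d$ polynomial whose roots are exactly the $d$ zeros of $\sQ(\eps,\cdot)$ in $B$, and a continuous selection property for the roots of a one-parameter continuous family of monic polynomials of fixed degree.

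First I would apply Rouch\'e's theorem on $\partial B$. Since all $d$ roots of $\sQ(0,\cdot)$ lie inside $B$, the polynomial $\sQ(0,\cdot)$ does not vanish on the compact set $\partial B$, and $(\eps,y)\mapsto\sQ(\eps,y)$ is continuous. Hence there exists $\eps_1\in(0,\eps_0]$ such that, for all $\eps\in[0,\eps_1)$, $\sQ(\eps,\cdot)$ stays uniformly close to $\sQ(0,\cdot)$ on $\partial B$, and Rouch\'e's theorem then yields exactly $d$ zeros of $\sQ(\eps,\cdot)$ in $B$, counted with multiplicities. For the tail statement, I would pick any $R$ with $B\subset\{|y|<R\}$ and apply the same argument on the disk $\{|y|\leq R\}$. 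Since $\sQ_j(0)=0$ for $j>d$, the polynomial $\sQ(0,\cdot)$ has genuine degree $d$, so its winding number on the circle $|y|=R$ is still $d$. For $\eps$ small enough $\sQ(\eps,\cdot)$ therefore has exactly $d$ zeros in $\{|y|\leq R\}$, all of which are already accounted for by the zeros in $B$; since $R$ is arbitrary, every root lying outside $B$ has modulus exceeding $R$ for $\eps$ small, i.e., tends to infinity.

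Second, I would construct a monic degree-$d$ polynomial $\widetilde{\sQ}(\eps,y)=\prod_{j=1}^d(y-\sZ_j(\eps))$ whose roots are precisely the $d$ zeros of $\sQ(\eps,\cdot)$ lying in $B$. By the residue theorem, the Newton power sums of these roots are given by the contour integrals
\[
s_k(\eps) \;=\; \frac{1}{2\pi i}\oint_{\partial B} y^k\,\frac{\partial_y\sQ(\eps,y)}{\sQ(\eps,y)}\,dy, \qquad k\in\N,
\]
which are continuous functions of $\eps\in[0,\eps_1)$ since the integrand is jointly continuous on $[0,\eps_1)\times\partial B$. By Newton's identities the elementary symmetric functions of the $d$ interior roots, which are up to signs the coefficients of $\widetilde{\sQ}$, are continuous in $\eps$.

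The third and hardest step is to lift the continuously varying multiset of roots of $\widetilde{\sQ}(\eps,\cdot)$ to an ordered tuple of continuous functions $\sZ_1,\ldots,\sZ_d:[0,\eps_1)\to\C$. This is the main obstacle, since at parameter values where two branches coalesce the ordered labelling is ambiguous. However, it is classical that any continuous one-parameter family of monic polynomials of a fixed degree $d$ over $\C$ admits a continuous labelling of its roots; this can be proved by induction on $d$, using the connectedness of $\C$ and the implicit function theorem away from the discriminant locus, or invoked from standard perturbation-theoretic results. Choosing at $\eps=0$ any ordering of the roots of $\sQ(0,\cdot)$ counted with multiplicities and extending via this lifting yields the required continuous branches $\sZ_1,\ldots,\sZ_d$, which together with the first step completes the proof.
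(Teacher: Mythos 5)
Your proof is correct and follows essentially the same route as the paper: both count roots inside $B$ by the argument principle (your Rouch\'e formulation is equivalent to the Cauchy index integral the paper uses), both send the outer roots to infinity by letting the auxiliary radius $R$ grow, and both finish by invoking a continuous selection theorem for unordered $d$-tuples depending on a real parameter (the paper cites Kato; you sketch a proof). The only genuine difference is your intermediate Newton-power-sum step to package the interior roots into an auxiliary monic polynomial with continuous coefficients, which is a slightly more explicit way of establishing the continuity of the unordered root multiset that the paper instead argues directly via small balls around each root.
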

\begin{proof}
We only sketch the proof, which is classical.
By the Cauchy index theorem, 
if $\gamma$ is any circle in $\C$
containing no roots of $\sQ(\epsilon,\cdot)$,
the number of roots of $\sQ(\epsilon,\cdot)$
inside $\gamma$ 
is $(2\pi i)^{-1}\int_\gamma \partial_z \sQ(\epsilon,z)(\sQ(\epsilon,z))^{-1}\,\mrm{d}z$.
By continuity of $\eps\mapsto \sQ(\eps,\cdot)$, 
the number of roots of $\sQ(\eps',\cdot)$
inside $\gamma$ (counted with multiplicities) is constant for $\eps'$ in
some neighborhood of $\eps$. Taking $B$ as in the lemma,
$\gamma=\partial B$, and $\eps=0$, we get exactly $d$  roots of $\sQ(\eps',\cdot)$
in $B$ for $\eps'$ in some interval $[0,\eps_1)$.
Consider now a ball $B_R\supset B$ of radius $R$. 
For $\eps'$ small enough, the number of roots of $\sQ(\eps',\cdot)$ in
either $B_R$ or $B$ is equal to $d$, hence
any root of $\sQ(\eps',\cdot)$ outside $B$ must be outside $B_R$.
This shows that the roots of $\sQ(\eps',\cdot)$ 
that do not belong to $B$ go to infinity, when $\eps'\to 0$.
Finally, by taking small balls around each root of $\sQ(\eps,\cdot)$,
with $0\leq \eps< \eps_1$, we see that the map which sends $\eps$
to the unordered $d$-tuple of roots of $\sQ(\eps,\cdot)$ that belong to $B$,
is continuous on $[0,\eps_1)$.
By a selection theorem for unordered $d$-tuples depending continuously on a
real parameter (see for instance~\cite[Ch. II, Section~ 5, 2]{kato}),
we derive the existence of the $d$ continuous branches $\sZ_1,\ldots ,\sZ_d$.
\end{proof}

\begin{proof}[Proof of Theorem~\ref{th3-local}]
This is obtained by applying the
first step of the Puiseux algorithm (which is part of the proof
of the classical Newton-Puiseux theorem).
Indeed, applying the change of variable $y=z \eps^{c}$, 
and the division of $\sP$ 
by $\eps^{\eval{P}(c)}$, transforms the equation $\sP(\eps,y)
:=\sum_{j=0}^{n} \sP_j(\eps)y^j=0$ into
an equation $\sQ(\eps,z)=0$, where $\sQ(\cdot,z)$ extends 
continuously to $0$ with $\sQ(0,z)=p^{(c)}(z)$.
Since $p^{(c)}$ is not identically zero,
Lemma~\ref{cont-roots} implies that for $d=\deg p^{(c)}$, 
 there exist $\sZ_1,\ldots ,\sZ_d\in \cont$
 such that  for all $\eps\geq 0$ small enough,
$\sZ_1(\eps),\ldots ,\sZ_d(\eps)$ are exactly the roots
(counted with multiplicities) of
$\sQ(\eps,\cdot)$ in some ball around $0$,
and that the roots of $\sQ(\eps,\cdot)$ outside this ball
tend to infinity when $\eps$ goes to $0$.

In particular $\sZ_1(0),\ldots ,\sZ_d(0)$ are the roots
of $p^{(c)}$, then if $v=\val p^{(c)}$, 
 $\ell=d-v$ is the number of non-zero roots
and one can assume that $\sZ_i(0)=y_i$ for $i\leq \ell$.
Making the reverse change of variable, we obtain that
$\sY_i(\eps)=\sZ_i(\eps)\eps^{c}$, $i=1,\ldots d$
 satisfy the conditions of the theorem.

It remains to show that $\ell\leq m$,
$v\geq m'$,  and $n-v-\ell\geq n-m-m'$,
  where $m'$  is the sum of the multiplicities of all
the roots of $P$ greater than $c$.
By Corollary~\ref{carac-mul},
we have $\eval{P}(c)<P_j c^j$ when $j<m'$ or $j>m+m'$,
from which we deduce that  $v=\val p^{(c)}\geq m'$ and
$d=\deg p^{(c)}\leq m'+m$, so that $\ell=d-v\leq m$, 
and $n-v-\ell=n-d\geq n-m-m'$,
which finishes the proof.
\end{proof}

\begin{proof}[Proof of Theorem~\ref{th3}]
We first prove~\eqref{th3-1}$\implies$\eqref{th3-2}.
Let $Q=(\iY\oplus Y_1)\cdots (\iY\oplus Y_n)$. Then, $Q=\divex{Q}$,
$\corn{Q}=(Y_1\leq \cdots \leq Y_n)$ and
$Q_{n-i}= Y_1\cdots Y_i$ for all $i=1,\ldots , n$.
Since $\sY_1(\eps),\ldots, \sY_n(\eps)$
are the roots of $\sP(\eps, y)=0$ counted with multiplicities, 
and $\sP_n=1$, it follows that
$\sP(\eps, \iY) =\prod_{i=1}^n (\iY- \sY_i(\eps)) $.
Hence, $(-1)^i\sP_{n-i}$ is the sum of all
products $\sY_{j_1}\cdots \sY_{j_i}$, where $j_1,\ldots, j_i$
are pairwise distinct elements of $\{1,\ldots , n\}$.
By the properties of ``$\simeq$'' (stability by addition and
multiplication), and since $\bigoplus_{j_1,\ldots , j_i}
 Y_{j_1}\cdots Y_{j_i}=Y_1\cdots Y_i
=Q_{n-i}$, we obtain that there exist $p_0,\ldots, p_{n-1}\in \C$
such that $\sP_{j}\simeq p_j \eps^{Q_j}$ for all $j=0,\ldots, n-1$.
Putting $p_n=1$, we also get $\sP_{n}=1\simeq p_n \eps^{Q_n}$ 
since $Q_n=\unit$.
When $i=1,\ldots , n-1$ is such that
$Y_i<Y_{i+1}$, $\sY_{1}\cdots \sY_{i}$ is the only leading term 
in the sum of all $\sY_{j_1}\cdots \sY_{j_i}$,
and then $p_{n-i}=(-1)^i y_1\cdots y_i\neq 0$.
Moreover, for $i=n$, either $Y_n\neq\zero$, which implies that
$p_{0}=(-1)^n y_1\cdots y_n\neq 0$, or $Y_n=\zero$, which implies
that $\sY_n=0$, $\sP_0=0$ and $Q_0=\zero$.
This shows that $(c_1,\ldots,c_n)=(Y_1,\ldots,Y_n)$
and $P=Q$ are as in Point~\eqref{th3-2}.

The implication~\eqref{th3-2}$\implies$\eqref{th3-1}
and the remaining part of the theorem will follow from several applications of 
Theorem~\ref{th3-local}.
We now only assume that the $\sP_j\in\cont$ 
satisfy Point~\eqref{th3-2} and consider
$(c_1\leq \cdots \leq c_n)=\corn{P}$.
From Lemma~\ref{carac-ci} applied to $P$, we get that
$P\geq P_n (\iY\oplus c_1)\cdots (\iY\oplus c_n)$,
$P_{n-i}=P_n c_1\cdots c_i$
for all $i\in\{0,n\}\cup\set{i\in \{1,\ldots , n-1\}}{c_i< c_{i+1}}$,
and $\divex{P}=P_n (\iY\oplus c_1)\cdots (\iY\oplus c_n)$.
It follows from~{\rm (\ref{defexp},\ref{puis0-1})}, 
that $\dex{\sP}\geq P\geq P_n (\iY\oplus c_1)\cdots (\iY\oplus c_n)$,
and from  Point~\eqref{th3-2} and~\eqref{puis0},
we get that $\dex{\sP}_{n-i}= P_{n-i}$
for all $i\in\{0,n\}\cup\set{i\in \{1,\ldots , n-1\}}{c_i< c_{i+1}}$.
Then, $\dex{\sP}$ satifies the conditions of  Lemma~\ref{carac-ci},
which yields $\corn{\dex{\sP}}=\corn{P}$ and
$\divex{\dex{\sP}}=\divex{P}$.

Let $c\neq\zero$ be  a root of $P$ with multiplicity $m$.
Then, there exists $0\leq i <n$ such that  $c=c_{i+1}=\cdots =c_{i+m}$.
Using the same arguments as in the proof of Theorem~\ref{th3-local},
we have that $i=n-m'-m$, where $m'$ is the sum of the multiplicities of all
the roots of $P$ greater than $c$, 
$\val p^{(c)}\geq m'=n-m-i$ and $d=\deg p^{(c)}\leq m'+m=n-i$.
Moreover, since  either $i=0$ or $c_{i}<c_{i+1}$,
we get, by Point~\eqref{th3-2}, that $p_{n-i}\neq 0$, hence $\deg p^{(c)}=n-i$.
Similarly, we have either $i+m=n$ or $c_{i+m}<c_{i+m+1}$.
In the second case, we get $p_{n-i-m}\neq 0$, thus $\val p^{(c)}=n-m-i$.
In the first case, $n-m-i=0$ and either $p_{0}\neq  0$ or $P_{0}=\zero$.
Since $P_{0}=\zero$ implies $c=c_n=\zero$, which contradicts our
assumption, we must have $p_0\neq 0$, hence again $\val p^{(c)}=n-m-i$.
This shows that $\deg p^{(c)}-\val p^{(c)}=m$, so that
$ p^{(c)}$ has $\ell=m$ non-zero roots and applying 
Theorem~\ref{th3-local}, we get that
if $y_{i+1},\ldots, y_{i+m}$ denote its non-zero roots,
then there exist $m$ roots
of $\sP$  (counted with multiplicities),
 $\sY_{i+1},\ldots, \sY_{i+m}\in \cont$,
 having first order asymptotics of the form
$\sY_j\sim y_j \eps^c$, $i+1\leq j\leq i+m$.

Finally, if $c=\zero$ is a root of $P$ with multiplicity $m$, then 
$c_{n-m}<c_{n-m+1}=\cdots =c_n=\zero$, $\val P=m$, and
$P$ has $n-m$ roots $\neq\zero$.
This implies that $\val \sP\geq m$, so that $0$ is
a root of $\sP$ with multiplicity $\geq m$.
Moreover, we have shown above that there exist $n-m$ roots of $\sP$ with first
order asymptotics with respective 
exponents $c_1\leq \cdots \leq c_{n-m}<+\infty$.
Hence, $\sP$ cannot have more than $m$ zero roots, so that $\sP$  has exactly
$m$ roots with first order asymptotics of the form $Y_j\sim 0$
(and $\val \sP= m$).
We thus have shown Point~\eqref{th3-1}, which finishes the proof of the theorem.
\end{proof}

\section{Genericity for asymptotics of eigenvalues}
\label{sec-gen}

We consider a matrix $\sA\in \cont^{n\times n}$ 
and we shall assume that the entries $(\sA_\eps)_{ij}$ of $\sA_\eps$
have asymptotics of the form:
\begin{align}
&(\sA_\eps)_{ij}\simeq a_{ij} \eps^{A_{ij}},\;\label{assump1} 
\mrm{for some matrices }\\& a=(a_{ij})\in \C^{n\times n},%
\mrm{ and }%
A=(A_{ij})\in \rmin^{n\times n}.\nonumber
\end{align}
Applying Corollary~\ref{cor-major-eig-pre}, we obtain the
following majorization inequality.
\begin{theorem}\label{th-major-cont}
Let  $\sA\in \cont^{n\times n}$ satisfy~\eqref{assump1}
and  let 
$\Gamma=(\gamma_1\leq\cdots \leq \gamma_n)$ be the sequence of 
min-plus algebraic eigenvalues of $A$.
Assume that the eigenvalues $\sL^1_\eps,\ldots , \sL^n_\eps$
of $\sA_\eps$ (counted with multiplicities) have first order asymptotics,
\begin{align*}%
\sL_\eps^i\sim \lambda_i \eps^{\Lambda_i}\enspace ,
\end{align*}
and let $\Lambda = (\Lambda_1 \leq \cdots \leq \Lambda_n)$.
Then, $\Lambda  \weakm\Gamma$.
\end{theorem}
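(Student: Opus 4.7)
The plan is to reduce the statement to a direct application of Corollary~\ref{cor-major-eig-pre} with the ring $\Ring = \cont$ equipped with the quasi-valuation $\dexo$ defined in~\eqref{puis-1}. Recall from Section~\ref{sec-quasival} that $\cont$ is a commutative ring, that $\dexo$ is a quasi-valuation on $\cont$, and that the associated subset $\Ring\proper$ from~\eqref{defacirc} coincides with the set $\cont\proper$ of germs having a large deviation type asymptotics.

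First I would identify the hypotheses needed to invoke Corollary~\ref{cor-major-eig-pre}. The matrix $\sA$ is an element of $\Ring^{n \times n} = \cont^{n \times n}$, and by assumption it has $n$ algebraic eigenvalues (counted with multiplicities) $\sL^1_\eps, \ldots, \sL^n_\eps$ in $\cont$, each of which admits a first order asymptotics $\sL_\eps^i \sim \lambda_i \eps^{\Lambda_i}$. By the implication~\eqref{puis0}, such a first order asymptotics guarantees both that $\sL^i \in \cont\proper = \Ring\proper$ and that $\dex{\sL^i} = \Lambda_i$. Hence the ordered sequence of images of the eigenvalues under $\dexo$ is exactly $\Lambda = (\Lambda_1 \leq \cdots \leq \Lambda_n)$.

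Next I would produce the lower bound on $\dex{\sA_{ij}}$ required by the corollary. From the assumption $(\sA_\eps)_{ij} \simeq a_{ij} \eps^{A_{ij}}$ and the implication~\eqref{puis0-1}, we obtain $\dex{\sA_{ij}} \geq A_{ij}$ for every $i,j \in [n]$. The min-plus matrix $A$ supplied by the theorem therefore plays the role of the matrix called $A$ in the statement of Corollary~\ref{cor-major-eig-pre}, and the sequence $\Gamma = (\gamma_1 \leq \cdots \leq \gamma_n)$ of its min-plus algebraic eigenvalues is the target majorizing sequence.

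Applying Corollary~\ref{cor-major-eig-pre} then yields directly $\Lambda \weakm \Gamma$, which is the claimed majorization. There is essentially no obstacle here: all the work has been done upstream, first in building the quasi-valuation framework of Section~\ref{sec-quasival}, and second in establishing the majorization inequality for quasi-valuations in Corollary~\ref{cor-major-eig-pre}. The only point that requires a moment's care is the two-step transition from the coefficient-wise ``$\simeq$'' asymptotics of the entries of $\sA$ (which in general only give an inequality $\dex{\sA_{ij}} \geq A_{ij}$, not an equality) to the ordered sequence $\Gamma$ of tropical eigenvalues of $A$; this is precisely what Corollary~\ref{cor-major-eig-pre} was designed to handle, by combining Theorem~\ref{th-major-eig-pre} with the monotonicity Lemma~\ref{minpoly-maj} applied to the min-plus characteristic polynomials of $\dex{\sA}$ and of $A$.
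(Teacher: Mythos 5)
Your proposal is correct and follows exactly the same route as the paper's proof: use~\eqref{puis0} to place the eigenvalues in $\cont\proper$ and identify their images under $\dexo$ with $\Lambda$, use~\eqref{puis0-1} to derive $\dex{\sA_{ij}} \geq A_{ij}$, and conclude by Corollary~\ref{cor-major-eig-pre} applied to $\cont$ with the matrices $\sA$ and $A$.
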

\begin{proof}
From~\eqref{puis0}, the  eigenvalues of $\sA_\eps$
are elements of $\cont\proper$
and their images by $\dexo$ (counted with multiplicities)
are equal to $\Lambda_1\leq \cdots \leq \Lambda_n$.
By~\eqref{puis0-1} and~\eqref{assump1}, we have $\dex{\sA}_{ij}\geq A_{ij}$,
for all $i,j\in [n]$.
Then, applying Corollary~\ref{cor-major-eig-pre} to the ring $\cont$,
and the matrices $\sA$ and $A$, 
we deduce that $\Lambda \weakm\Gamma$.
\end{proof}

\begin{remark}\label{rem-puis}
If the coefficients of $\sA_\eps$ have Puiseux series expansions in $\eps$, 
the coefficients $\sQ_j$ of the characteristic 
polynomial $\sQ(\eps,\iY)$  of $\sA$ and thus the eigenvalues of $\sA$ 
belong to $\cont$ and have first order asymptotics,
so that Theorem~\ref{th-major-cont} applies.
If we only assume that $\sA\in\cont^{n\times n}$ 
satisfies~\eqref{assump1}, then the coefficients $\sQ_j$,
which are elements of $\cont$, 
need not have first order asymptotics (even if $a_{ij}\neq 0$ for all
$i,j$) due to cancellations. 
However, they satisfy the conditions $\sQ_n= 1$
and $\sQ_j(\eps)\simeq q_j \eps^{Q_j}$ for some exponents 
$Q_j\in\rmin$ computed using 
the exponents $A_{ij}$ (see Section~\ref{sec-min-plus-charac}),
so that Theorem~\ref{th3} can be applied, leading to some sufficient
conditions for the first order asymptotics of the eigenvalues of $\sA_\eps$
to exist. This will be used in Theorem~\ref{th-gen=}.
\end{remark}

We next show that the weak majorization inequality is 
an inequality in generic circumstances.
Formally, we shall consider the following notion of genericity.
We will say that a property $\ptyP(y)$ depending on the variable $y=(y_1,\ldots
, y_n)\in \C^n$ holds for \new{generic values} of $y$ if the set 
of elements  $y\in \C^{n}$ such that the property $\ptyP(y)$
is false is included in a proper algebraic set.
This means that there exists $Q\in\C[\iY_1,\ldots, \iY_n]\setminus\{0\}$ 
such that $\ptyP(y)$ is true if $Q(y)\neq 0$.
When the parameter $y$ will be obvious, we shall simply say that
$\ptyP$ is \new{generic} or holds \new{generically}.
It is clear that if $\ptyP_1$ and $\ptyP_2$ are both generic, then
``$\ptyP_1$ and $\ptyP_2$'' is also generic.

Since any polynomial $q=\sum_{i_1,\ldots, i_n\in\N} 
q_{i_1,\ldots, i_n} \iY_1^{i_1}\cdots \iY_n^{i_n} 
\in \C[\iY_1,\ldots, \iY_n]$ in $n$ indeterminates
can be seen as
an element of  $\cont [\iY_1,\ldots, \iY_n]$ whose coefficients are constant 
with respect to $\eps$, we have:
\begin{align}\label{qQ}
\dex{q}=\bigoplus_{\scriptstyle i_1,\ldots, i_n\in\N\atop\scriptstyle q_{i_1,\ldots, i_n}\neq 0}
\iY_1^{i_1}\cdots \iY_n^{i_n} \enspace \in \rmin[\iY_1,\ldots, \iY_n]
\enspace .\end{align}
We also define, for any $Y\in\rmin^n$, 
\begin{align}\label{qsat}
q^{\sat}_Y:=\sum_{\scriptstyle i_1,\ldots, i_n\in\N\atop\scriptstyle \eval{\dex{q}}(Y_1,\ldots , Y_n)=
Y_1^{i_1}\cdots Y_n^{i_n}\neq \zero } q_{i_1,\ldots, i_n} \iY_1^{i_1}\cdots
\iY_n^{i_n} \in \C[\iY_1,\ldots, \iY_n] \enspace .
\end{align}

The following result is clear from the above definitions of $\dex{q}$
and $q^\sat_Y$, since when $y\neq 0$ or $Y=\zero$, $\sY\simeq y \eps^Y\iff
\sY\sim y \eps^Y$.
Note that there and in the sequel, we use the same notation for any
formal polynomial over $\C$ or $\cont$ and its associated polynomial 
function.

\begin{lemma}\label{gener}
Let $q\in \C[\iY_1,\ldots, \iY_n]$ be non zero and  let $Q=\dex{q}$ and
$q^\sat_Y$ be defined by~\eqref{qQ} and~\eqref{qsat}, respectively.
Let $\sY\in \cont^n$, $y\in \C^n$ and $Y\in \rmin^n$ be 
such that $\sY_i\simeq y_i \eps^{Y_i}$ for $i=1,\ldots , n$. Then, 
\begin{align}\label{simpoly}
 q(\sY_1,\ldots, \sY_n)&\simeq q^{\sat}_Y(y) \eps^{\eval{Q}(Y)}\enspace ,
\end{align}
and for any fixed $Y$ such that $\eval{Q}(Y)\neq \zero$,
we have $q^{\sat}_Y(y)\neq 0$ for generic values of $y\in \C^n$.
Then, for generic values of $y\in \C^n$, the relation $\simeq$
in~\eqref{simpoly} can be replaced by an the asymptotic equivalence relation $\sim$. 
\end{lemma}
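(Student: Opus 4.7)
The plan is to compute $q(\sY_1,\ldots,\sY_n)$ directly from its monomial expansion, grouping the monomials by their $\eps$-exponents and isolating the dominant contribution.

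First, I would verify that the relation $\simeq$ of~\eqref{e-f} is stable under multiplication, namely that $f\simeq a\eps^A$ and $g\simeq b\eps^B$ imply $fg\simeq ab\eps^{A+B}$. This is a short case analysis distinguishing whether the exponents are finite or equal to $+\infty$, using the convention that $h\simeq c\eps^{+\infty}$ means $h=0$. An immediate induction then yields
\[
\sY_1^{i_1}\cdots \sY_n^{i_n}\simeq y_1^{i_1}\cdots y_n^{i_n}\,\eps^{i_1 Y_1+\cdots+i_n Y_n}
\]
for every $(i_1,\ldots,i_n)\in\N^n$, the exponent being the tropical monomial $Y_1^{i_1}\cdots Y_n^{i_n}$.

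Next, I would group the monomials of $q$ by their common $\eps$-exponent: writing
\[
q(\sY_1,\ldots,\sY_n)=\sum_{M\in\rmin} S_M,\qquad S_M=\sum_{\substack{(i_1,\ldots,i_n):\,q_{i_1,\ldots,i_n}\neq 0\\ i_1Y_1+\cdots+i_nY_n=M}} q_{i_1,\ldots,i_n}\,\sY_1^{i_1}\cdots\sY_n^{i_n},
\]
the stability of $\simeq$ under sums at a common exponent gives $S_M\simeq c_M\eps^M$ where $c_M$ is the same sum with each $\sY_k$ replaced by $y_k$. The smallest exponent $M$ with $S_M$ non-trivial is precisely $\eval{Q}(Y)$ by definition of $Q=\dex{q}$ in~\eqref{qQ}, and for this particular $M$ the coefficient $c_M$ is $q^{\sat}_Y(y)$ by definition~\eqref{qsat}. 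All remaining contributions are $o(\eps^{\eval{Q}(Y)})$, so summing yields~\eqref{simpoly}.

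For the genericity statement, I would observe that $q^{\sat}_Y$ is a non-zero element of $\C[\iY_1,\ldots,\iY_n]$ whenever $\eval{Q}(Y)\neq\zero$: the tropical minimum defining $\eval{Q}(Y)$ is finite and attained over the finite set of indices with $q_{i_1,\ldots,i_n}\neq 0$, so at least one such index contributes a non-zero monomial to $q^{\sat}_Y$. Hence $\{y\in\C^n:q^{\sat}_Y(y)=0\}$ is a proper algebraic subset, so generically $q^{\sat}_Y(y)\neq 0$. When this happens, $\eval{Q}(Y)$ is finite and the leading coefficient is non-zero, so by the observation just after~\eqref{e-f} the relation $\simeq$ in~\eqref{simpoly} is equivalent to $\sim$.

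The main point to handle with care is the bookkeeping in the corner cases where some $y_k=0$ or $Y_k=\zero$. The conventions on $\simeq$ already take care of them: a factor $\sY_k$ with $Y_k=\zero$ equals $0$ and hence zeroes any monomial in which $\sY_k$ appears with positive power, while the corresponding tropical monomial $Y_1^{i_1}\cdots Y_n^{i_n}$ equals $\zero$ and is automatically excluded from $q^{\sat}_Y$ as soon as $\eval{Q}(Y)\neq\zero$. There is no genuine mathematical difficulty beyond applying these conventions consistently.
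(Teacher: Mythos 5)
Your proof is correct and follows the same route the paper has in mind: the paper itself gives no argument, merely remarking that the lemma is clear from the definitions and from the observation that $\simeq$ coincides with $\sim$ when the coefficient is nonzero or the exponent is $+\infty$. Your write-up supplies exactly the monomial-by-monomial bookkeeping (stability of $\simeq$ under products and under sums at a fixed exponent, then grouping by tropical value) that makes this remark precise, and the final genericity step, noting that $q^{\sat}_Y$ is a nonzero polynomial when $\eval{Q}(Y)\neq\zero$, is the intended argument.
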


\begin{theorem}\label{th-gen=}
Let  $\sA\in \cont^{n\times n}$ satisfy~\eqref{assump1}
and  let 
$\Gamma=(\gamma_1\leq\cdots \leq \gamma_n)$ be the sequence of 
min-plus algebraic eigenvalues of $A$.
For generic values of $a=(a_{ij})\in \C^{n\times n}$, 
the eigenvalues $\sL^1_\eps,\ldots , \sL^n_\eps$
of $\sA_\eps$ (counted with multiplicities) have first order asymptotics,
\begin{align*}%
\sL_\eps^i\sim \lambda_i \eps^{\Lambda_i}\enspace ,
\end{align*}
and 
$\Lambda = (\Lambda_1 \leq \cdots \leq \Lambda_n)$
satisfies $\Lambda=\Gamma$.
\end{theorem}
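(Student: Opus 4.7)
The plan is to apply Theorem~\ref{th3} to the characteristic polynomial $\sQ(\eps,\iY) := \det(\iY I - \sA_\eps) \in \cont[\iY]$. Its coefficients are $\sQ_j = (-1)^{n-j}\tr_{n-j}(\sA)$ as in~\eqref{trusual}, hence explicit signed sums of products of entries $\sA_{ij}$. Plugging in $(\sA_\eps)_{ij} \simeq a_{ij}\eps^{A_{ij}}$ and applying Lemma~\ref{gener} term by term, one obtains $\sQ_j(\eps) \simeq q_j \eps^{Q_j}$, where $Q_j = \trm_{n-j}(A) = (P_A)_j$ is the $j$-th coefficient of the min-plus characteristic polynomial $P := P_A = \perm(\iY I \oplus A)$, and where $q_j$ is an explicit polynomial in the indeterminates $(a_{ij})$: namely the signed sum $\sum (-1)^{n-j}\sgn(\sigma)\prod_{i \in J} a_{i\sigma(i)}$ over the pairs $(J,\sigma)$, with $|J|=n-j$ and $\sigma \in \Sym_J$, that attain the optimum in~\eqref{trmin}.

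The next step is to verify that assertion~\eqref{th3-2} of Theorem~\ref{th3} holds for generic $a$. The conditions $p_n=1$ and $P_n=\unit$ are automatic since $\sQ_n = 1$. It remains to check $q_{n-i} \neq 0$ for every index $i \in \{1,\ldots,n-1\}$ with $c_i < c_{i+1}$, where $(c_1 \leq \cdots \leq c_n) = \corn{P} = \Gamma$, and $q_0 \neq 0$ whenever $P_0 \neq \zero$. The key observation is that distinct pairs $(J,\sigma)$ produce distinct monomials in the independent indeterminates $(a_{ij})$; hence no cancellation can occur in the signed sum defining $q_{n-k}$, so $q_{n-k}$ is a \emph{nonzero} element of $\C[a_{ij}]$ as soon as the corresponding set of optimal $(J,\sigma)$ is nonempty, which holds precisely when $\trm_k(A) < +\infty$. (When $(P_A)_{n-k} = +\infty$, the nonvanishing requirement is vacuous.) The conjunction of these finitely many polynomial nonvanishing conditions defines a nonempty Zariski-open subset of $\C^{n\times n}$.

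For such generic $a$, Theorem~\ref{th3} then produces eigenvalues $\sL^i_\eps$ of $\sA_\eps$ with first order asymptotics $\sL^i_\eps \sim \lambda_i \eps^{\Lambda_i}$ and $(\Lambda_1 \leq \cdots \leq \Lambda_n) = \corn{\dex{\sQ}} = \corn{P} = \Gamma$, yielding the desired equality $\Lambda = \Gamma$. The main technical point is the absence of cancellation among the leading monomials of the $\sQ_j$, which is exactly what promotes the weak majorization of Theorem~\ref{th-major-cont} to an equality. Conceptually, the result says that whenever the leading coefficients of the entries of $\sA$ are generic, the Newton polygon of the characteristic polynomial collapses onto the tropical polytope read directly from $(A_{ij})$ via the optimal assignment problem.
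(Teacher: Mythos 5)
Your proof follows essentially the same route as the paper's: apply Lemma~\ref{gener} to the coefficients $\sQ_j = \pm\tr_{n-j}(\sA_\eps)$ of the characteristic polynomial to get $\sQ_j \simeq q_j \eps^{P_j}$, observe that the polynomials $q_j = (\tr_{n-j})^\sat_A(a)$ are nonzero for generic $a$ (a fact already built into Lemma~\ref{gener}, via the observation you rederive that distinct pairs $(J,\sigma)$ yield distinct squarefree monomials $\prod_{i\in J}a_{i\sigma(i)}$, so no cancellation can occur), verify assertion~\eqref{th3-2} of Theorem~\ref{th3}, and conclude $\Lambda = \corn{\dex{\sQ}} = \corn{P_A} = \Gamma$. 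Your argument is, if anything, slightly more economical than the paper's by only checking nonvanishing at the break points rather than at all indices, but the underlying ideas and the key lemmas invoked are identical.
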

\begin{proof}
Since $\sA=\sA_\epsilon\in \cont^{n\times n}$ and $\cont$ is a ring,
the characteristic polynomial of $\sA$,
$\sQ(\eps,\iY):=\det (\iY I- \sA_\eps)$ belongs to $\cont[\iY]$.
Let $Q=\dex{\sQ}\in \rmin[\iY]$, and denote by
$P=\perm (\iY I\oplus A)\in  \rmin[\iY]$
the min-plus characteristic polynomial of $A$.
The coefficients of $\sQ$ are given by 
 $\sQ_k(\eps)=(-1)^{k} \tr_{n-k} (\sA_\eps)$, for $k=0,\ldots , n-1$
and $\sQ_n=1$, where $\tr_k$ is the usual $k$-th trace~\eqref{trusual}.
The coefficients of $P$ are given by  $P_k=\trm_{n-k} (A)$, for $k=0,\ldots ,
 n-1$ and $P_n=\unit$, where here 
$\trm_k$ is the min-plus $k$-th trace~\eqref{trmin}.
By Lemma~\ref{gener}, we obtain that for any fixed %
matrix $A\in\rmin^{n\times n}$, and any $\sA\in\cont^{n\times n}$
satisfying~\eqref{assump1} with $a\in\C^{n\times n}$ and $A$,
$\tr_k(\sA_\eps)\sim (\tr_k)^\sat_A(a) \eps^{\trm_{k}(A)}$,
for generic values of $a\in \C^{n\times n}$.
In particular, generically, 
$\sQ_k(\eps)$ has first order asymptotics and
$\dex{\sQ_k}=P_k$, for all $k=0,\ldots , n$. This implies that
$Q=P$, thus $\corn{Q}=\corn{P}=\Gamma$.
Moreover, $\sQ$ satisfies Point~\eqref{th3-2} of Theorem~\ref{th3}.
Hence, by Theorem~\ref{th3}, 
the eigenvalues $\sL^1_\eps,\ldots , \sL^n_\eps$
of $\sA_\eps$ have first order asymptotics
and their exponents $\Lambda_1\leq \cdots \leq \Lambda_n$
are equal to the roots of $Q$, hence $\Lambda=\Gamma$.
\end{proof}

\begin{remark}
Since min-plus eigenvalues can be
computed in polynomial time, see Section~\ref{sec-tropeig},
Theorem~\ref{th-gen=} shows that
the sequence $\Lambda$ of generic exponents of the eigenvalues
can be computed in polynomial time. 
\end{remark}

\section{Eigenvalues of matrix polynomials}
\label{sec-pencils}

We consider now a matrix polynomial
\begin{equation}\label{defAepsilon}
\sA= \sA_{0}+ \iY \sA_{1} +\cdots+\iY^d\sA_{d}  \enspace ,
\end{equation}
with coefficients $\sA_{k}\in \cont^{n\times n}$, $k=0,\ldots, d$.
Making explicit the dependence in the parameter $\epsilon$,
we shall write
\[
\sA_\epsilon = \sA_{\epsilon,0}+ \iY \sA_{\epsilon,1} +\cdots+\iY^d\sA_{\epsilon,d}  \enspace ,
\]
where, for every $k=0,\ldots, d$, $\sA_{\epsilon,k}$ is
a $n\times n$ matrix whose coefficients, $(\sA_{\epsilon,k})_{ij}$,
are complex valued continuous functions
of the nonnegative parameter $\epsilon$.
We shall assume that for every $0\leq k\leq d$, matrices
$a_k=((a_k)_{ij})\in \C^{n\times n}$ and $A_k=((A_k)_{ij})\in \rmin^{n\times n}$ are given, so that
\begin{equation}\label{pencil-asymp}
(\sA_{\eps,k})_{ij} \simeq (a_k)_{ij}\epsilon^{(A_k)_{ij}}\enspace,
\qquad \mrm{ for all } 
 1\leq i,j\leq n \enspace .
\end{equation}

We shall also assume that the matrix polynomial $\sA_\epsilon$
is {\em regular}, which means that the characteristic polynomial  $\det(\sA_\epsilon)$
(or $\det(\sA)$) of the matrix polynomial $\sA_\epsilon$
is non identically zero.
Then, the \new{eigenvalues} of $\sA_\epsilon$ (or $\sA$) are by definition the
roots of this polynomial. If $\deg\det(\sA)<nd$, then
we also say that $\infty$ is an eigenvalue of $\sA$ with 
multiplicity $nd-\deg\det(\sA)$.
When $\sA=\sA_{0}- \iY\id$, these are the usual eigenvalues
of $\sA_{0,\epsilon}$.
We shall study here the first order asymptotics of
the eigenvalues of $\sA_\epsilon$ 
in the same spirit as in the previous section.

To the matrix polynomial $\sA$, one can associate the
{\em min-plus matrix polynomial}
$\dex{\sA}:= \dex{\sA_{0}}\oplus \iY\dex{ \sA_{1}}\oplus\cdots\oplus\iY^d\dex{\sA_{d} }$.
Here, we shall rather consider the min-plus matrix polynomial
associated to the asymptotics~\eqref{pencil-asymp}:
\begin{equation}\label{defAtrop}
A=A_0\oplus \iY A_1 \oplus \cdots  \oplus \iY^dA_d\in \rmin^{n\times n}[\iY]
\enspace.\end{equation}
The min-plus matrix polynomial $A$ 
can be seen either as a (formal) polynomial with coefficients 
in $\rmin^{n\times n}$, namely the $A_k$, $k=0,\ldots, d$,
or as a matrix with entries in $\rmin[\iY]$, denoted $A_{ij}$.
We denote by $\eval{A}$ the function
 which associates to $y\in\rmin$, the matrix
$\eval{A}(y):= A_0\oplus y A_1 \oplus \cdots  \oplus y^dA_d\in \rmin^{n\times n}$.
We call \new{min-plus characteristic polynomial} of the matrix polynomial $A$,
the permanent \(
P_A = \perm A\). This is a formal polynomial, the  associated
polynomial function of which is equal to
\( \eval{P_A}(y) = \perm \eval{A}(y)\).

We shall say that the matrix polynomial $A$ is {\em regular} if $P_A$ is
not identically zero. 
Then, the min-plus roots of $P_A$ will be called the
{\em algebraic eigenvalues} of the min-plus matrix polynomial $A$. 
Note that the valuation $\val P_A$ can be computed
by introducing the matrix $\val A\in \rmin^{n\times n}$,
such that $(\val A)_{ij}=\val A_{ij}$,
where $A_{ij}$ denotes the $(i,j)$ entry of the min-plus polynomial $A$.
Then, $\val P_A$ is equal to the min-plus permanent of the matrix $\val A$.
By symmetry, the degree $\deg P_A$ is equal to the 
max-plus permanent of the matrix
$\deg A \in \rmax^{n\times n}$,
such that $(\deg A)_{ij}=\deg A_{ij}$.
When  $\val P_A>0$, $\zero=+\infty$ is an eigenvalue of $A$
with multiplicity $\val P_A$. When $\deg P_A<nd $, 
$P_A$ has only $\deg P_A$ eigenvalues (belonging to $\rmin$).
One can say that $-\infty$ (the
infinity of $\rmin$) is an eigenvalue of $A$ with multiplicity 
$nd- \deg P_A$.
Recall that, for any scalar $y\in\rmin$,
we have  \( \eval{P_A}(y) = \perm \eval{A}(y)\)
which is the value of the optimal assignment associated to the
matrix $\eval{A}(y)$. 
Hence, 
the algebraic eigenvalues of the
matrix polynomial $A$ (and so, the polynomial
function $\eval P_A$) can be computed
in $O(n^4d)$ time by adapting
the method of Burkard and Butkovi\v{c}~\cite{bb02}.
Moreover, by adapting the parametric method of Gassner and Klinz~\cite{gassner},
Hook~\cite{James} showed that this can be reduced to a $O(n^3d^2)$ time.

The following result generalizes Theorem~\ref{th-major-cont}.
\begin{theorem}\label{th-major-pencil}
Let $\sA$ be as in~\eqref{defAepsilon} and~\eqref{pencil-asymp}
and denote by $A$ the min-plus matrix polynomial~\eqref{defAtrop}
with coefficients $A_k$ as in~\eqref{pencil-asymp}. 
Assume also that $\sA_d=I$, the identity matrix in
$\cont^{n\times n}$ so that $A_d=I$ the identity matrix in $\rmin^{n\times n}$.
Then, $A$ has $nd$ min-plus algebraic eigenvalues (belonging to $\rmin$).
Let  $\Gamma=(\gamma_1\leq\cdots \leq \gamma_{nd})$ be the sequence of 
these eigenvalues.
Assume that the eigenvalues $\sL^1_\eps,\ldots , \sL^{nd}_\eps$
of $\sA_\eps$ (counted with multiplicities) have first order asymptotics,
\begin{align*}%
\sL_\eps^i\sim \lambda_i \eps^{\Lambda_i}\enspace ,
\end{align*}
and let $\Lambda = (\Lambda_1 \leq \cdots \leq \Lambda_n)$.
Then, $\Lambda  \weakm\Gamma$.
\end{theorem}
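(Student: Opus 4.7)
The plan is to follow essentially the same strategy as in the proofs of Theorems~\ref{th-major-eig} and~\ref{th-major-cont}, but replacing the scalar characteristic polynomial of a matrix with the characteristic polynomial of the matrix polynomial $\sA_\eps$, and replacing the tropical permanent of a matrix with the tropical permanent of the matrix polynomial $A$. The key observation is that the scalar polynomial $\sQ(\iY):=\det \sA_\eps\in \cont[\iY]$ has degree exactly $nd$ and leading coefficient $\det \sA_{\eps,d}=\det I=1$, so it has exactly $nd$ roots (with multiplicities), which are the eigenvalues $\sL^1_\eps,\dots,\sL^{nd}_\eps$.

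First, I would expand the determinant explicitly. Writing $(\sA_\eps)_{ij}=\sum_{k=0}^d(\sA_{\eps,k})_{ij}\iY^k$ and distributing the product over the sum, the coefficient of $\iY^m$ in $\sQ$ is
\[
\sQ_m=\sum_{\sigma\in\Sym_n}\sum_{\substack{(k_1,\dots,k_n)\in\{0,\dots,d\}^n\\ k_1+\cdots+k_n=m}} \sgn(\sigma)\prod_{i=1}^n(\sA_{\eps,k_i})_{i\sigma(i)}\enspace.
\]
By~\eqref{puis0-1} applied to~\eqref{pencil-asymp} and the quasi-valuation properties~\eqref{defexp}, each product has exponent $\geq \sum_i(A_{k_i})_{i\sigma(i)}$, and the sup-additivity of $\dexo$ over finite sums therefore gives $\dex{\sQ_m}\geq \min_{\sigma,(k_i)}\sum_i(A_{k_i})_{i\sigma(i)}$. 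The right-hand side is precisely the coefficient $P_m$ of the formal polynomial $P:=\perm A\in\rmin[\iY]$, obtained by expanding $\bigotimes_i A_{i\sigma(i)}(\iY)$ with $A_{ij}(\iY)=\bigoplus_k(A_k)_{ij}\iY^k$. Thus $\dex{\sQ}\geq P$ coefficient-wise. A direct computation also shows that $P_{nd}=\perm A_d=\perm I=\unit$ (the only nonzero contribution comes from $\sigma=\idp$), matching $\dex{\sQ_{nd}}=0=\unit$. In particular, $P$ has degree $nd$, so $A$ has exactly $nd$ tropical algebraic eigenvalues in $\rmin$, justifying the preamble of the theorem.

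To finish, I would invoke Proposition~\ref{prop-kap-pre} applied to the ring $\Ring=\cont$ and the monic polynomial $\sQ$: since the eigenvalues $\sL^i_\eps$ are assumed to admit first order asymptotics, \eqref{puis0} places them in $\cont\proper$, so their images by $\dexo$, which form the sequence $\Lambda$, coincide with the sequence $\corn{\dex{\sQ}}$ of tropical roots of $\dex{\sQ}$. Then Lemma~\ref{minpoly-maj}, applied to the inequality $\dex{\sQ}\geq P$ together with the common leading coefficient $\unit$, yields $\corn{\dex{\sQ}}\weakm\corn{P}=\Gamma$, hence $\Lambda\weakm\Gamma$.

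The only delicate point is the combinatorial identification in the second paragraph showing that expanding $\det \sA_\eps$ and taking coefficient-wise lower bounds on exponents reproduces exactly the expansion of $\perm A$; but this is a direct bookkeeping exercise parallel to the scalar case treated in Theorem~\ref{th-major-eig}, the only novelty being that the summation over $\Sym_n$ is refined by a summation over exponent tuples $(k_1,\dots,k_n)$ with prescribed sum, which corresponds precisely to picking up one monomial per matrix entry $A_{i\sigma(i)}(\iY)$ in the tropical permanent.
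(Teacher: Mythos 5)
Your proof is correct and follows essentially the same route as the paper's: reduce to the monic scalar polynomial $\sQ=\det\sA_\eps$, apply Proposition~\ref{prop-kap-pre} to identify $\Lambda$ with $\corn{\dex{\sQ}}$, establish $\dex{\sQ}\ge P:=\perm A$ coefficient-wise with equal leading terms, and conclude by Lemma~\ref{minpoly-maj}. The one thing you do beyond the paper is spell out the combinatorial bookkeeping of why $\dex{\sQ_m}\geq P_m$ (the paper dismisses it with ``computing the coefficients\dots we can deduce that $Q\geq P$''); you also don't mention the paper's alternative argument via a block companion linearization reducing to Theorem~\ref{th-major-cont}, but that is only noted there as a remark.
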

\begin{proof}
Let us denote by $\sQ=\det({\sA})\in\cont[\iY]$ 
the characteristic polynomial of $\sA$ 
and by $P=\perm A$ the min-plus characteristic polynomial of $A$.
Since $\sA_d=I$, we have $\sQ_{nd}=1$ and $P_{nd}=\unit$.
The eigenvalues $\sL^1_\eps,\ldots , \sL^{nd}_\eps$
of $\sA_\eps$ are the roots of the polynomial $\sQ$.
Assume that they  have first order asymptotics with exponents
$\Lambda_1\leq \cdots \leq \Lambda_{nd}$.
From~\eqref{puis0}, these are elements of $\cont\proper$
and their images by $\dexo$ (counted with multiplicities)
are equal to $\Lambda_1\leq \cdots \leq \Lambda_{nd}$.
By Proposition~\ref{prop-kap-pre} applied to the ring $\cont$,
$\Lambda=(\Lambda_1\leq \cdots \leq \Lambda_{nd})$
coincide with the roots of the min-plus polynomial $Q=\dex{\sQ}$.
Then, computing the coefficients of the formal polynomial $\sQ$,
we can deduce that $Q\geq P$. Since $Q_{nd}=P_{nd}=\unit$,
we get that $\Lambda=R(Q)\weakm R(P)=\Gamma$.

Note that another way to prove this result is to apply a block companion 
transformation to the matrix polynomials $\sA$ and $A$,
yielding linear pencils $\sB$ and $B$ with leading terms
$\sB_1$ and $B_1$ equal to the
identity matrix, then to apply Theorem~\ref{th-major-cont} to 
$\sB_0$.
\end{proof}

For any matrix $B\in \rmin^{n\times n}$ such that $\perm B\neq\zero$,
we define the graph $\opt(B)$ as the set of arcs belonging
to optimal assignments: the set of nodes of $\opt(B)$ 
is $[n]$ and there is an arc from $i$ to
$j$ if there is a permutation $\sigma$ such that
$j=\sigma(i)$ and $|\sigma|_B=\perm B$.
One can compute $\opt(B)$ by  using the following construction.

We shall say that two vectors
$U,V$ of dimension $n$ with entries in $\R=\rmin\setminus\{\zero\}$
form a \new{Hungarian pair} with respect to $B$ if, for all $i,j$,
we have $B_{ij}\geq U_iV_j$, 
and $U_1\cdots U_n V_1\cdots V_n=\perm B$,
the products being understood
in the min-plus sense (since $U,V$ are seen as elements of $\rmin^n$).
Thus, $(U,V)$ coincides with the
optimal dual variable in the linear programming
formulation of the optimal assignment problem.
In particular, a Hungarian pair always exists if
the optimal assignment problem is feasible,
i.e., if $\perm B \neq \zero$, and it can
be computed in $O(n^3)$ time by the Hungarian
algorithm (see for instance~\cite[\S~17]{schrijver}).
For any Hungarian pair $(U,V)$, we now define 
the {\em saturation graph}, 
$\sat(B,U,V)$, which has set of nodes $[n]$
and an arc from $i$ to $j$ if $B_{ij}=U_iV_{j}$.
We shall see in the following section that $\opt(B)$ can be 
computed easily from $\sat(B,U,V)$.

For any min-plus matrix polynomial $A$ and any
scalar $\gamma\in\rmin$, we denote by $\G_k(A,\gamma)$ the graph
with set of nodes $[n]$, and an arc from $i$ to
$j\in [n]$ if $\gamma^k(A_k)_{ij}=\eval{A}_{ij}(\gamma)\neq \zero$.
This is a subgraph of the graph of $\eval{A}(\gamma)$.
For any graphs $G$ and $G'$, the {\em intersection} $G\cap G'$ is
the graph whose set of nodes (resp.\ arcs) is the
intersection of the sets of nodes (resp.\ arcs) of $G$ and $G'$.
Finally, if $G$ is any graph with set of nodes $[n]$, 
and if $b\in \C^{n\times n}$,
we define the matrix $b^{G}$ by
$(b^{G})_{ij}=b_{ij}$ if $(i,j)\in G$,
and $(b^{G})_{ij}=0$ otherwise.
The following results generalize Theorem~\ref{th3-local}
and~\ref{th-gen=}, respectively. 
Their proof will be given in the next section.

\begin{theorem}
\label{th-1}
Let $\sA$ be a regular matrix polynomial over $\cont$
as in~\eqref{defAepsilon} satisfying~\eqref{pencil-asymp}
and denote by $A$ the min-plus matrix polynomial~\eqref{defAtrop}
with coefficients $A_k$ as in~\eqref{pencil-asymp}.
Then $A$ is a regular matrix polynomial and we have:
\begin{enumerate}
\item\label{th-1-val} $\val P_{\sA_\epsilon}\geq \val P_A$, which means that
if $\zero=+\infty$ is an eigenvalue of $A$ with multiplicity
$m_{\zero,A}$, then 
$0$ is an eigenvalue of $\sA_\epsilon$ with a multiplicity 
at least $m_{\zero,A}$.
\item\label{th-1-deg} $\deg P_{\sA_\epsilon}\leq \deg P_A$, which means that
if $-\infty$ is an eigenvalue of $A$ with multiplicity
$m_{-\infty,A}$, then 
$\infty$ is an eigenvalue of $\sA_\epsilon$ with a multiplicity 
at least $m_{-\infty,A}$.
\item\label{th-1-finite} Let $\gamma$ denote any finite ($\neq\pm\infty$) algebraic eigenvalue of $A$, and denote by $m_{\gamma,A}$ its multiplicity.
Let $G$ be equal either to $\opt(\eval A(\gamma))$ or 
$\sat(\eval A(\gamma),U,V)$,
for any choice of the Hungarian pair $(U,V)$ with respect to $\eval A(\gamma)$,
and let $G_k=\G_{k}(A,\gamma)\cap G$ for all $0\leq k\leq d$.
Assume that the matrix polynomial
\begin{align}\label{pencilsat}
a^{(\gamma)}:= a_0^{G_{0}}  + \iY a_1^{G_{1}} +\cdots + 
\iY^da_d^{G_{d}} \enspace .
\end{align}
is regular. 
Let $m_\gamma\geq 0$ be the number of non-zero eigenvalues
of the matrix polynomial $a^{(\gamma)}$ and let
$\lambda_1,\ldots,\lambda_{m_\gamma}$ be these eigenvalues.
Then $m_\gamma\leq m_{\gamma,A}$ and
the matrix polynomial $\sA_\epsilon$ has $m_\gamma$ eigenvalues
$\sL_{\epsilon,1},\ldots,\sL_{\epsilon,m_\gamma}$
with first order asymptotics of the form
\(
\sL_{\epsilon,i}\sim \lambda_i \epsilon^{\gamma}
\).

Let us denote by $m'_{\gamma,A}$ the sum of the multiplicities of all the algebraic  eigenvalues
of $A$ greater than $\gamma$ ($+\infty$ comprised), 
putting $m'_{\gamma,A}=0$ if no such eigenvalues exist,
and let $m''_{\gamma,A}=nd-m_{\gamma,A}-m'_{\gamma,A}$.
Let $m'_\gamma=\val \det (a^{(\gamma)})$ be the multiplicity of
$0$ as an eigenvalue of the matrix polynomial $a^{(\gamma)}$,
with $m'_\gamma=0$ if $0$ is not an eigenvalue.
Let also $m''_\gamma=nd-m_\gamma-m'_\gamma$ be the multiplicity of 
$\infty$  as an eigenvalue of the matrix polynomial $a^{(\gamma)}$,
as it were of degree $d$.
Then $m'_\gamma\geq m'_{\gamma,A}$ (resp.\  $m''_\gamma\geq m''_{\gamma,A}$)
and the matrix polynomial $\sA_\epsilon$ has precisely $m'_\gamma$ (resp.\  
$m''_\gamma$) eigenvalues $\sL_\epsilon$ such that
$\sL_\eps\simeq 0 \epsilon^{\gamma}$ 
(resp.\ $\sL_\eps^{-1}\simeq 0 \epsilon^{-\gamma}$, that is
the modulus of $\epsilon^{-\gamma}\sL_\eps$ converges to infinity).
\end{enumerate}
\end{theorem}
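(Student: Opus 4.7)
The plan is to reduce everything to the scalar Newton--Puiseux theorem with partial information on valuations, Theorem~\ref{th3-local}, applied to the characteristic polynomial $\sQ(\eps,\iY):=\det(\sA_\eps)$ of the matrix polynomial $\sA$, viewed as an element of $\cont[\iY]$ of formal degree $nd$. Write $P:=P_A=\perm A\in\rmin[\iY]$ for the min-plus characteristic polynomial of $A$.

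First I would compute the asymptotics of the coefficients of $\sQ$. Expanding the determinant and using the stability of $\simeq$ under sums and products gives
\[
\sQ_j\simeq q_j\,\eps^{Q_j},\qquad
Q_j=\min_{\substack{\sigma\in\Sym_n,\;(k_1,\ldots,k_n)\in\N^n\\ k_1+\cdots+k_n=j}}\sum_{i=1}^n (A_{k_i})_{i\sigma(i)},
\]
where $q_j$ is the sum of $\sgn(\sigma)\prod_i (a_{k_i})_{i\sigma(i)}$ over the minimizers. Expanding $P=\perm A$ term-by-term shows that $Q_j=P_j$ for every $j$. In particular if $P_j=\zero$ then, by the convention in the definition of $\simeq$, the coefficient $\sQ_j$ vanishes identically, which immediately gives Part~\eqref{th-1-val} and Part~\eqref{th-1-deg}, and also proves regularity of $A$ (if $P\equiv\zero$ then $\sQ\equiv 0$, contradicting regularity of $\sA$).

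For Part~\eqref{th-1-finite} I would apply Theorem~\ref{th3-local} with $c=\gamma$. The crux is to identify the auxiliary polynomial $p^{(\gamma)}$ of~\eqref{defpc} with $\det(a^{(\gamma)})$. Unwinding the definitions, the pairs $(\sigma,(k_i))$ contributing to the coefficient of $\iY^j$ in $p^{(\gamma)}$ are those for which $\sum_i k_i=j$, $\sum_i(A_{k_i})_{i\sigma(i)}=P_j$, and $P_j+j\gamma=\eval P(\gamma)$. Adding the elementary inequalities $(A_{k_i})_{i\sigma(i)}+k_i\gamma\geq \eval A(\gamma)_{i\sigma(i)}\geq U_iV_{\sigma(i)}$ and $\sum_i \eval A(\gamma)_{i\sigma(i)}\geq \perm\eval A(\gamma)$, and observing that equality of the sums forces termwise equality, these are exactly the pairs with $(i,\sigma(i))\in \G_{k_i}(A,\gamma)$ for every $i$ and $\sigma$ optimal for $\eval A(\gamma)$. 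Since optimality of $\sigma$ is equivalent to all its arcs lying in $\opt(\eval A(\gamma))$, and, by LP duality with the Hungarian pair $(U,V)$, also equivalent to all its arcs lying in $\sat(\eval A(\gamma),U,V)$, the set of contributing pairs is the same for $G=\opt$ and for $G=\sat$ and matches exactly the expansion of $\det\bigl(\sum_k \iY^k a_k^{G_k}\bigr)$, proving the identification.

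Once $p^{(\gamma)}=\det a^{(\gamma)}$ is established, the three conclusions of Part~\eqref{th-1-finite} translate directly from Theorem~\ref{th3-local}: the $m_\gamma=\deg p^{(\gamma)}-\val p^{(\gamma)}$ nonzero roots of $p^{(\gamma)}$ are the nonzero eigenvalues of $a^{(\gamma)}$ and furnish $m_\gamma$ branches with $\sL_{\eps,i}\sim\lambda_i\eps^{\gamma}$ and the bound $m_\gamma\leq m_{\gamma,A}$; the valuation $m'_\gamma=\val p^{(\gamma)}\geq m'_{\gamma,A}$ yields that many branches with $\sL_\eps\simeq 0\cdot\eps^{\gamma}$; and the remaining $m''_\gamma=nd-m_\gamma-m'_\gamma$ branches satisfy $\eps^{-\gamma}\sL_\eps\to\infty$. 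One views $\sQ$ as a polynomial of formal degree $nd$ throughout, so that the roots ``going to infinity'' in Theorem~\ref{th3-local} absorb both genuinely diverging branches and formally missing roots when $\deg\sQ<nd$. I expect the main obstacle to lie in the combinatorial bookkeeping of this identification, and in particular in verifying that the $\opt$ and $\sat$ constructions produce the same auxiliary polynomial through the interplay of the optimality conditions with the per-entry constraints defining $\G_k(A,\gamma)$.
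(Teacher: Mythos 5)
Your proof is correct, and it is a genuinely different route from the paper's. The paper first performs a monomial scaling, replacing $\sA_\eps$ by $\sB_\eps=\diagp{-U}\,\sA_\eps(\eps^\gamma\iY)\,\diagp{-V}$, which normalizes to $\gamma=\unit$ and $U=V=\unit$, so that the identification $p^{(\unit)}=\det a^{(\gamma)}$ drops out transparently from the pointwise limit $\lim_{\eps\to 0}\sB_\eps=a^{(\gamma)}$; then Theorem~\ref{th3-local} is applied at $c=\unit$ to $\det\sB$. You skip the scaling entirely, apply Theorem~\ref{th3-local} at $c=\gamma$ directly to $\sQ=\det\sA$, and prove $p^{(\gamma)}=\det a^{(\gamma)}$ by the complementary-slackness argument: the chain $(A_{k_i})_{i\sigma(i)}+k_i\gamma\geq \eval A(\gamma)_{i\sigma(i)}\geq U_iV_{\sigma(i)}$ summed against $\eval{P_A}(\gamma)=\perm\eval A(\gamma)=\sum_i U_iV_i$ forces termwise equality, which is exactly membership in $\G_{k_i}(A,\gamma)\cap\sat$. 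Each approach buys something: the paper's normalization makes the limiting matrix polynomial visible and keeps the bookkeeping local, while yours exposes the LP-duality structure explicitly and gives a stronger conclusion at the $\opt$/$\sat$ step --- you observe that the two determinants are literally the same polynomial (since ``permutation of $\sat$'', ``permutation of $\opt$'', and ``optimal'' coincide by Corollary~\ref{coro2}), whereas the paper proves the weaker ``same eigenvalues'' via a separate block-triangularity lemma built on Corollary~\ref{coro5}. Both are sound. Two small points worth making explicit if you write this up: (i) the step $Q_j=P_j$ uses the coincidence of the min-plus Cauchy-product expansion of $\perm A$ with the classical determinant expansion, and this is where Lemma~\ref{gener} is implicitly used to get $\sQ_j\simeq q_j\eps^{Q_j}$ even when cancellation drives $q_j$ to $0$; (ii) your handling of missing/infinite roots by fixing the formal degree at $nd$ is exactly right and should be said in the $G=\opt$ case too, since $\deg\det a^{(\gamma)}$ may well be strictly less than $nd$.
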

\begin{theorem}
\label{th-1-gen}
Let $\sA$ be a matrix polynomial over $\cont$
as in~\eqref{defAepsilon} satisfying~\eqref{pencil-asymp}
and denote by $A$ the min-plus matrix polynomial~\eqref{defAtrop}
with coefficients $A_k$ as in~\eqref{pencil-asymp}.
Assume that $A$ is a regular matrix polynomial.
Then, for generic values of the parameters $(a_k)_{ij}$,
the matrix polynomial $\sA$ is regular,
the inequalities in Theorem~\ref{th-1} are equalities,
and all eigenvalues of the matrix po lynomial $\sA_\epsilon$ have
first order asymptotics. More precisely,
we have:
\begin{enumerate}
\item $\val P_{\sA_\epsilon}= \val P_A$, which means that
$\zero=+\infty$ is an eigenvalue of $A$ if and only if
$0$ is an eigenvalue of $\sA_\epsilon$ and they have same multiplicity.
\item $\deg P_{\sA_\epsilon}= \deg P_A$, which means that
$-\infty$ is an eigenvalue of $A$ if and only if
$\infty$ is an eigenvalue of $\sA_\epsilon$ and they have same multiplicity.
\item for any finite ($\neq\pm\infty$) algebraic eigenvalue  $\gamma$ of $A$,
the matrix polynomial $a^{(\gamma)}$ of~\eqref{pencilsat} is regular, and
has $m_{\gamma,A}$ non-zero eigenvalues. Moreover, 
$m'_{\gamma,A}=\val \det (a^{(\gamma)})$ is the the multiplicity of
$0$ as an eigenvalue of the matrix polynomial $a^{(\gamma)}$,
and  $m''_{\gamma,A}=nd-m_{\gamma,A}-m'_{\gamma,A}$ is the multiplicity of 
$\infty$  as an eigenvalue of the matrix polynomial $a^{(\gamma)}$,
as it were of degree $d$.
\end{enumerate}
\end{theorem}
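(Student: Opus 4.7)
The plan is to follow the strategy of Theorem~\ref{th-gen=}, applied to the characteristic polynomial of the matrix polynomial $\sA$, combined with Theorems~\ref{th3} and~\ref{th-1}. Set $\sQ := \det(\sA_\eps) \in \cont[\iY]$ and $P := \perm A \in \rmin[\iY]$. Each coefficient $\sQ_k$ is a signed sum over pairs $(\sigma, (k_1,\ldots,k_n))$ with $\sum_i k_i = k$ of $\sgn(\sigma) \prod_i (\sA_{\eps,k_i})_{i\sigma(i)}$; its tropical counterpart $P_k$ is the minimum over the same combinatorial data of $\sum_i (A_{k_i})_{i\sigma(i)}$. Applying Lemma~\ref{gener} coefficientwise to $\sQ$, viewed as polynomials in the variables $(a_\ell)_{ij}$ with coefficients in $\cont$, one obtains that, generically in the $(a_\ell)_{ij}$, $\sQ_k \sim q_k \eps^{P_k}$ with $q_k$ the associated saturated polynomial evaluated at the $a$'s, and that $q_k \neq 0$ whenever $P_k$ is finite. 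In particular, $\dex{\sQ} = P$ holds generically, which yields $\val P_{\sA_\eps} = \val P_A$ and $\deg P_{\sA_\eps} = \deg P_A$, establishing the first two points and the regularity of $\sA$.

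For the third point, the generic identity $\dex{\sQ} = P$, together with generic non-vanishing of the breakpoint coefficients of $P$, places $\sQ$ under Point~\eqref{th3-2} of Theorem~\ref{th3}. Applied at a finite algebraic eigenvalue $\gamma$ of $A$, this theorem identifies the leading coefficients of the $m_{\gamma,A}$ eigenvalues of $\sA_\eps$ with exponent $\gamma$ as the nonzero roots of
\[
p^{(\gamma)}(y) \;=\; \sum_{k\,:\,P_k + k\gamma = \eval{P}(\gamma)} q_k\, y^k \enspace,
\]
and moreover $\val p^{(\gamma)} = m'_{\gamma,A}$, $\deg p^{(\gamma)} = m_{\gamma,A} + m'_{\gamma,A}$. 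The remaining task is to establish the key identity $p^{(\gamma)}(y) = \det a^{(\gamma)}(y)$. Once this is proved, the genericity of $q_k \neq 0$ makes $a^{(\gamma)}$ regular, with precisely $m_{\gamma,A}$ nonzero eigenvalues, multiplicity $m'_{\gamma,A}$ at $0$, and multiplicity $m''_{\gamma,A} = nd - m_{\gamma,A} - m'_{\gamma,A}$ at $\infty$ when viewed as a pencil of degree $d$, which is exactly the claim.

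To prove the identity $p^{(\gamma)} = \det a^{(\gamma)}$, I would expand
\[
\det a^{(\gamma)}(y) \;=\; \sum_{\sigma\in\Sym_n} \sgn(\sigma) \prod_{i=1}^n \sum_{k\,:\,(i,\sigma(i))\in G_k} (a_k)_{i\sigma(i)}\, y^k
\]
and use two combinatorial observations. First, a standard property of the optimal assignment problem states that any $\sigma$ all of whose arcs lie in $\sat(\eval{A}(\gamma),U,V)$, which contains $\opt(\eval{A}(\gamma))$, is optimal for $\eval{A}(\gamma)$; this eliminates non-optimal permutations from the expansion and shows that the choice $G \in \{\opt(\eval{A}(\gamma)), \sat(\eval{A}(\gamma),U,V)\}$ has no effect on $\det a^{(\gamma)}$. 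Second, the condition $(i,\sigma(i)) \in \G_{k_i}(A,\gamma)$ forces $k_i$ to realise the tropical minimum in $\eval{A}_{i\sigma(i)}(\gamma)$, so that $\sum_i (A_{k_i})_{i\sigma(i)} = P_{\sum_i k_i}$ and $\sum_i k_i\gamma + \sum_i (A_{k_i})_{i\sigma(i)} = \eval{P}(\gamma)$. Matching degree by degree with the explicit expression for $q_k$ obtained from Lemma~\ref{gener}, one recognises exactly the coefficients of $p^{(\gamma)}$.

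The main obstacle lies precisely in this combinatorial identification: it requires reconciling two a priori distinct algebraic expansions — one arising from the tropical Newton polygon of $\sQ$ together with its generic leading coefficients, the other from the explicit determinant of a matrix polynomial built from saturation data — and verifying that the optimal-assignment and saturation-graph constructions produce the same polynomial. An ancillary point is to observe that the finitely many genericity conditions invoked along the way (nonvanishing of the saturated coefficients $(\tr_k)^\sat$ at $a$, and nonvanishing of the leading and trailing coefficients of $p^{(\gamma)}$ at each finite algebraic eigenvalue $\gamma$ of $A$) define the complement of a proper algebraic subset of the parameter space, hence are jointly generic.
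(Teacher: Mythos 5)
Your proposal is correct in outline, and the combinatorial route you sketch is sound, but it takes a genuinely different path from the paper. The paper first performs a \emph{diagonal scaling}: setting $\sB_\epsilon = \diagp{-U}\,\sA_\epsilon(\epsilon^\gamma \iY)\,\diagp{-V}$ and the corresponding tropical $B_k = \diag{U}^{-1}\gamma^k A_k\diag{V}^{-1}$, one reduces to $\gamma = \unit$ and $U=V=\unit$, after which $(B_k)_{ij}\ge\unit$ with equality exactly when $(i,j)\in G_k$. Consequently $\sB_\epsilon \to a^{(\gamma)}$ \emph{entrywise} as $\epsilon\to 0$, and the key identity $p^{(\unit)} = \det a^{(\gamma)}$ follows immediately from continuity of the determinant and the fact that $p^{(\unit)}$ collects precisely those coefficients of $\det(\sB_\epsilon)$ that stay bounded away from $0$. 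You instead work unscaled and aim to identify $\det a^{(\gamma)}$ directly with the coefficient polynomial $\sum_{k:\,P_k+k\gamma=\eval P(\gamma)} q_k \iY^k$ by a term-by-term bijection in the permanental/determinantal expansions. This works: for any contributing term $(\sigma,k_1,\ldots,k_n)$ with $\sum k_i=k$, the chain of inequalities $\sum_i (k_i\gamma + (A_{k_i})_{i\sigma(i)}) \ge \sum_i \eval A_{i\sigma(i)}(\gamma) \ge \perm\eval A(\gamma) = \eval P(\gamma) \le k\gamma + P_k$, compared with $\sum_i (A_{k_i})_{i\sigma(i)} \ge P_k$, forces all inequalities to equalities exactly when all arcs lie in $\G_{k_i}(A,\gamma)\cap G$ and $P_k+k\gamma=\eval P(\gamma)$, using Corollary~\ref{coro2} to identify permutations of $\sat$ (or $\opt$) with optimal permutations; this also shows that the determinant is literally unchanged if $G$ is switched between $\sat$ and $\opt$, which is stronger than the ``same eigenvalues'' lemma the paper proves for this purpose. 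What the scaling buys is a cleaner argument: the identity drops out of an entrywise limit, and the appeal to Theorem~\ref{th3-local} becomes transparent because after scaling $\eval P(\unit)=\unit$ and $p^{(\unit)}$ is simply the pointwise limit of the characteristic polynomial. What your route buys is a more self-contained combinatorial statement that avoids introducing the auxiliary pencil $\sB_\epsilon$. One caveat: you flag the central identification as ``the main obstacle'' and leave it at the sketch stage; to be complete you would need to write out the bijection above, and in particular make explicit the step that membership of all arcs in $\sat(\eval A(\gamma),U,V)$ implies $\sigma$ is optimal (Corollary~\ref{coro2}), which is what rules out spurious non-optimal permutations in $\det a^{(\gamma)}$.
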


\begin{example} \label{example-lidski}
Consider $\sA=\sA_{0}-\iY\id$, with
\[
\sA_{0} = \left[\begin{array}{ccc}
b_{11} \eps & b_{12} & b_{13}\eps\\
b_{21} \eps & b_{22}\eps & b_{23}\\
b_{31} \eps & b_{32}\eps & b_{33}\eps
\end{array}\right]\enspace,\;\mrm{where $b_{ij}\in \C$.}
\]
When $b_{12}=b_{23}=1$, the matrix $\sA_0$  corresponds to 
the perturbation of a Jordan block of size $3$ and zero-eigenvalue.
Vi\v sik, Ljusternik and Lidski\u\i\ theory predicts that
the eigenvalues of $\sA_0$ (thus of $\sA$) have first order 
asymptotics of the form $\sL_{\eps,i}\sim \lambda_i \eps^{1/3}$, $i=1,\ldots, 3$,
where $\lambda_i$ are the roots of $\lambda_i^3=b_{12}b_{23}b_{31}$.
Assume now that $b_{31}=0$, so that we are in a singular case of the 
Vi\v sik, Ljusternik and Lidski\u\i\ theory, and 
let us show that the results of the present section apply.

The associated min-plus matrix polynomial and characteristic polynomial
function are 
\[ A= \left[\begin{array}{ccc}1\oplus \iY&0&1\\1&1\oplus \iY&0\\+\infty&1&1\oplus \iY
\end{array}\right]\enspace,
\qquad 
\widehat{P_A}(x)=(x\oplus 0.5)^2(x\oplus 1)\enspace,
\]
so that the eigenvalues of $A$ are $\gamma_1=\gamma_2=0.5$,
with multiplicity $2$, and $\gamma_3=1$, 
with multiplicity $1$.
Hence, Theorem~\ref{th-1-gen} predicts that two of the
eigenvalues  of $\sA_0$ have first asymptotics of the
form $\sL\sim \lambda \eps^{1/2}$, with $\lambda\neq 0$, and 
that one of them has first asymptotics of the
form $\sL\sim \lambda \eps^{1}$.
 Note that the eigenvalue $\gamma=0.5$ is the unique
geometric min-plus eigenvalue of $\sA_0$ and that the associated critical
graph covers all nodes $\{1,2,3\}$, so that the results 
of~\cite{abg04b} can only predict the two first asymptotics of the
form $\sL\sim \lambda \eps^{1/2}$.

Let us now detail the results of Theorem~\ref{th-1-gen}.
We first consider the eigenvalue
$\gamma=0.5$.
Then $U=(0,0.5,1)$ and $V=(0.5, 0, -0.5)$ yields
a Hungarian pair with respect to the matrix
\[
\eval A(0.5)= \left[\begin{array}{ccc}0.5_{1}&0_0&1
\\ 1_0&0.5_1&0_0\\ +\infty&1_0& 0.5_1
\end{array}\right]\enspace,
\]
where we adopt the following convention
to visualize the graphs $G_k=\G_{k}(A,\gamma)\cap \sat(\eval A(\gamma),U,V)$:
an arc $(i,j)$ belongs to $G_k$ if $k$ is put
as a subscript of the entry $\eval A_{ij}(\gamma)$.
For instance, $\eval{A}_{11}(0.5)=0.5$, and $(1,1)$
belongs $G_1$.
Entries without subscripts, like
$\eval A_{13}(0.5)=1$, correspond
to arcs which do not belong to $\sat(\eval A(0.5),U,V)$.
The eigenvalues of the matrix polynomial $a^{(0)}$ are the roots of
\[
\det \left[\begin{array}{ccc}
-\lambda & b_{12} & 0\\
b_{21} & -\lambda &b_{23} \\
0 & b_{32} & -\lambda
\end{array}\right] 
= \lambda(-\lambda^2+ b_{12}b_{21}+b_{32}b_{23}) =0
\enspace.
\]
Theorem~\ref{th-1-gen} predicts that this equation has,
for generic values of the parameters $b_{ij}$,
two non-zero roots, $\lambda_1,\lambda_2$,
which yields two eigenvalues of $\sA_\epsilon$,
$\sL_{\epsilon,i}\sim \lambda_i\epsilon^{1/2}$,
for $i=1,2$. Here $\lambda_1$ and $\lambda_2$ are the square roots
of $b_{12}b_{21}+b_{32}b_{23}$.

Consider finally the eigenvalue $\gamma=1$.
We can take $U=(0,0,1)$, $V=(1,0,0)$,
and the previous computations become
\[
\eval A(1)= \left[\begin{array}{ccc}1_{01}&0_0&1
\\ 1_0&1&0_0\\
+\infty& 1_0&1_{01}
\end{array}\right]\enspace,\]
\[\det
\left[\begin{array}{ccc}
b_{11} -\lambda & b_{12} & 0\\
b_{21} & 0&b_{23} \\
0 & b_{32} & b_{33}-\lambda 
\end{array}\right] =
\lambda(b_{12}b_{21}+b_{23}b_{32})-
(b_{12}b_{21}b_{33}+b_{11}b_{23}b_{32})
=0 \enspace .
\]
Theorem~\ref{th-1-gen} predicts that this equation has,
for generic values of the parameters $b_{ij}$,
a unique nonzero root, $\lambda_3$, 
and that there is a branch $\sL_{\epsilon,3}\sim \lambda_3 \epsilon$.
Here $\lambda_3=(b_{12}b_{21}b_{33}+b_{11}b_{23}b_{32})/(b_{12}b_{21}+b_{23}b_{32})$.
\end{example}

\section{Preliminaries on Hungarian pairs}
\label{sec-hung}

To prove Theorems~\ref{th-1} and~\ref{th-1-gen}, we need to
establish some properties of the saturation graph associated to
Hungarian pairs.

We shall adopt the following notation.
For any $U\in\R^n$, we denote by $\diag{U}$ the diagonal
$n\times n$ matrix over $\rmin$ such that
$(\diag{U})_{ii}=U_i$, so that $(\diag{U})_{ij}=+\infty$ for $i\neq j$.
For any permutation $\sigma$ of $[n]$,
we denote by $\Psigmamin$ its associated min-plus permutation matrix:
$(\Psigmamin)_{ij}=\unit$ if $j=\sigma(i)$
and $(\Psigmamin)_{ij}=\zero$ otherwise. We reserve
the simpler notation $P^\sigma$ for the ordinary permutation matrix 
which is such that $P^{\sigma}_{ij}=1$ if $j=\sigma(i)$
and $P^{\sigma}_{ij}=0$ otherwise.

Moreover, if $U\in\R^n$, we use the notation 
$U_\sigma:=(U_{\sigma(i)})_{i=1,\ldots, n}$.
We shall say that a matrix $M\in \rmin^{n\times n}$ is \new{monomial}
if it can be written as $M=\diag{U}\Psigmamin$ for some $U\in\R^n$
and $\sigma\in\Perm_n$.
We have equivalently $M=\Psigmamin\diag{V}$, by taking
$V=U_{\sigma^{-1}}$.
Recall that the monomial matrices are the only invertible matrices over $\rmin$,
and that $M^{-1}=\Psigmainvmin\diag{-U}$.
For all matrices $A$,  $A^T$ denotes the transpose of $A$.
Now if $G$ is a graph with set of nodes $[n]$,
and $\sigma,\tau\in\Perm_n$, 
we denote by $G_{\sigma,\tau}$ the graph with the same set of nodes, and
an arc $(i,j)$ if and only if $(\sigma(i),\tau^{-1}(j))$ is an arc of $G$.

The following elementary result will allows one to normalize matrices
in a suitable way. 
\begin{lemma}\label{lem-permutation}
Let $B\in \rmin^{n\times n}$ such that $\perm B\neq\zero$,
and let $M=\diag{W}\Psigmamin$ and
$N=\diag{X}\Ptaumin$ be monomial matrices, with $W,X\in\R^n$
and $\sigma,\tau\in\Perm_n$.
Then $\perm (MBN)= W_1\cdots W_n X_1\cdots X_n \perm B=(\perm M) (\perm B)
(\perm N)$,
$\nu$ is an optimal permutation for $MBN$ if and only if
$\tau^{-1}\circ \nu\circ \sigma^{-1}$ is an optimal permutation for $B$, and
 $\opt(MBN)=\opt(B)_{\sigma,\tau}$.

Let  $(U,V)$ be a Hungarian pair with respect to $B$.
Then, $(MU,N^TV)$ is  a Hungarian pair with respect to $MBN$,
and we have $\sat(MBN,MU,N^TV)=\sat(B,U,V)_{\sigma,\tau}$.
\end{lemma}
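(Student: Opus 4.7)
The plan is a direct computation. The key preliminary step is to write out the entries of $MBN$, $MU$ and $N^TV$ in closed form, after which every claim reduces to a comparison of monomials indexed by permutations.

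First I would check that from the definitions $M_{i,\sigma(i)}=W_i$ and $N_{i,\tau(i)}=X_i$ (with all other entries equal to $\zero$), one gets
\[
(MBN)_{ij}=W_i\, B_{\sigma(i),\tau^{-1}(j)}\, X_{\tau^{-1}(j)},\qquad
(MU)_i=W_iU_{\sigma(i)},\qquad
(N^TV)_j=X_{\tau^{-1}(j)}V_{\tau^{-1}(j)}.
\]
Expanding the permanent then gives
\[
\perm(MBN)=\Bigl(\bigotimes_i W_i\Bigr)\Bigl(\bigotimes_i X_i\Bigr)\bigoplus_{\nu\in\Perm_n}\bigotimes_i B_{\sigma(i),\tau^{-1}(\nu(i))},
\]
because as $i$ ranges over $[n]$ the index $\tau^{-1}(\nu(i))$ is a permutation of $[n]$, so $\bigotimes_iX_{\tau^{-1}(\nu(i))}=\bigotimes_i X_i$ factors out.

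Next I would perform the change of variables $\mu:=\tau^{-1}\circ\nu\circ\sigma^{-1}$, a bijection of $\Perm_n$. Setting $k=\sigma(i)$ yields $\bigotimes_i B_{\sigma(i),\tau^{-1}(\nu(i))}=\bigotimes_k B_{k,\mu(k)}$, so the displayed sum is exactly $\perm B$. This establishes claim~(1), and the bijection $\nu\leftrightarrow\mu$ preserves the value of each summand (up to the common multiplicative factor $\bigotimes W_i\bigotimes X_i$), which is claim~(2). Claim~(3) about $\opt$ follows by tracking which arcs $(i,j)$ lie on an optimal $\nu$: $(i,j)\in\opt(MBN)$ iff some optimal $\nu$ satisfies $\nu(i)=j$, iff the corresponding optimal $\mu$ satisfies $\mu(\sigma(i))=\tau^{-1}(j)$, iff $(\sigma(i),\tau^{-1}(j))\in\opt(B)$, which is the definition of $\opt(B)_{\sigma,\tau}$.

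For the Hungarian pair part, the dominance inequality $(MBN)_{ij}\geq (MU)_i(N^TV)_j$ reduces, after cancelling the common factor $W_iX_{\tau^{-1}(j)}$, to $B_{\sigma(i),\tau^{-1}(j)}\geq U_{\sigma(i)}V_{\tau^{-1}(j)}$, which holds by hypothesis. The normalization identity is immediate:
\[
\bigotimes_i (MU)_i(N^TV)_i=\Bigl(\bigotimes W_i\Bigr)\Bigl(\bigotimes X_i\Bigr)\Bigl(\bigotimes U_i\Bigr)\Bigl(\bigotimes V_i\Bigr)=(\perm M)(\perm N)(\perm B)=\perm(MBN),
\]
using claim~(1) and $\bigotimes U_i\bigotimes V_i=\perm B$. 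Finally, the saturation graph statement follows from the same per-entry identity: $(i,j)\in\sat(MBN,MU,N^TV)$ iff $(MBN)_{ij}=(MU)_i(N^TV)_j$ iff $B_{\sigma(i),\tau^{-1}(j)}=U_{\sigma(i)}V_{\tau^{-1}(j)}$ iff $(\sigma(i),\tau^{-1}(j))\in\sat(B,U,V)$.

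The only mild obstacle is bookkeeping: because $N$ acts on the right but $V$ is multiplied by $N^T$ on the left, one must be careful that the permutation appearing is $\tau^{-1}$ rather than $\tau$. Once the closed form for $(MBN)_{ij}$ above is written down correctly, all five assertions follow mechanically from the change of variables $\mu=\tau^{-1}\nu\sigma^{-1}$.
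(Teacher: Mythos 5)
Your proposal is correct and follows essentially the same route as the paper: both compute $(MBN)_{ij}=W_iB_{\sigma(i),\tau^{-1}(j)}X_{\tau^{-1}(j)}$, use the change of variable $\mu=\tau^{-1}\circ\nu\circ\sigma^{-1}$ on permutation weights, and then track arcs and per-entry (in)equalities for the $\opt$, Hungarian-pair and $\sat$ claims. The bookkeeping around $\tau^{-1}$ that you flag is handled identically in the paper.
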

\begin{proof}
Let $B$, $M$, and $N$ be as in the lemma.
For all $i,j\in [n]$, we have $(MBN)_{ij}= W_i B_{\sigma(i),\tau^{-1}(j)}
 X_{\tau^{-1}(j)}$.
Hence, for all $\nu\in\Perm_n$, the weight of $\nu$ with respect to
$MBN$ is equal to 
\[|\nu|_{MBN}=W_1\cdots W_n X_1\cdots X_n |\tau^{-1}\circ \nu\circ \sigma^{-1}|_B
\enspace. \]
Hence,  $\perm (MBN)=W_1\cdots W_n X_1\cdots X_n \perm B$,
and $\nu$ is an optimal permutation for $MBN$ if and only if
$\tau^{-1}\circ \nu\circ \sigma^{-1}$ is an optimal permutation for $B$.
Moreover, since $\perm M= W_1\cdots W_n $ and $\perm N= X_1\cdots X_n $,
we deduce that  $\perm (MBN)=(\perm M)(\perm B) (\perm N)$.

Let $(i,j)$ be an arc of $\opt(MBN)$. Then, by definition,
 there exists an optimal permutation $\nu$ for $MBN$
such that $j=\nu(i)$, hence $\nu'=\tau^{-1}\circ \nu\circ \sigma^{-1}$
is  an optimal permutation  for $B$ such that $\tau^{-1}(j)=\nu'(\sigma(i))$,
which implies that $(\sigma(i),\tau^{-1}(j))$ is an arc of $\opt(B)$,
so $(i,j)$ is an arc of $\opt(B)_{\sigma,\tau}$.
Since the reverse implication is also true (replace $M$ and $N$ 
by their inverse matrices), 
this shows the first assertion of the lemma.

Now let $(U,V)$ be a Hungarian pair with respect to $B$.
We have $B_{ij}\geq U_{i} V_{j}$ for all $i,j\in [n]$.
Hence, $(MBN)_{ij}= W_i B_{\sigma(i),\tau^{-1}(j)}
 X_{\tau^{-1}(j)} \geq W_i U_{\sigma(i)} V_{\tau^{-1}(j)}  X_{\tau^{-1}(j)} =
(MU)_{i} (N^TV)_j$, and
\begin{align*}
\perm (MBN)&=W_1\cdots W_n X_1\cdots X_n \perm B\\
&= W_1\cdots W_n U_1\cdots U_nX_1\cdots X_nV_1\cdots V_n\\
&= (MU)_1\cdots (MU)_n(N^TV)_1\cdots (N^TV)_n\enspace,
\end{align*}
so that $(MU,N^TV)$ is  a Hungarian pair with respect to $MBN$.
Finally, $(i,j)$ is an arc of $\sat(MBN,MU,N^TV)$
if and only if $(MBN)_{ij}=(MU)_{i} (N^TV)_j$,
which is equivalent to $B_{\sigma(i) ,\tau^{-1}(j)}= U_{\sigma(i)} V_{\tau^{-1}(j)}$,
then to the property that
$(\sigma(i),\tau^{-1}(j))$ is an arc of $\sat(B,U,V)$,
hence to the one that $(i,j)$ is an arc of $\sat(B,U,V)_{\sigma,\tau}$.
\end{proof}

We denote by $\unit$ the vector of $\R^n$ with all its entries
equal to $\unit=0$.

\begin{corollary}\label{coro1}
Let $B\in \rmin^{n\times n}$ be such that $\perm B\neq\zero$,
and let $(U,V)$ be a Hungarian pair with respect to $B$.
Then, $(\unit,\unit)$ is a Hungarian pair with respect to 
the matrix $C:=\diag{U}^{-1}B\diag{V}^{-1}\in\rmin^{n\times n}$,
$\sat(B,U,V)=\sat(C,\unit,\unit)$  and $\opt(B)=\opt(C)$.
In particular, $\perm C=\unit$, 
$C_{ij}\geq \unit=0$ for all $i,j\in [n]$ and $(i,j)$ is an arc
of $\sat(B,U,V)$ if and only if $C_{ij}=\unit$.
\end{corollary}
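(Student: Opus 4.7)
The plan is to obtain the corollary as a direct specialization of Lemma~\ref{lem-permutation}. Set $M:=\diag{U}^{-1}$ and $N:=\diag{V}^{-1}$. Both matrices are monomial (the associated permutation is the identity), with the diagonal entries being the min-plus inverses of the entries of $U$ and $V$ respectively. Then $C=MBN$ in the notation of the lemma, and the identity permutations play the role of $\sigma,\tau$.

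First I would verify that the Hungarian pair $(MU,N^TV)$ produced by Lemma~\ref{lem-permutation} equals $(\unit,\unit)$. Since $N$ is diagonal we have $N^T=N$; computing entrywise in min-plus gives $(MU)_i=U_i^{-1}U_i=\unit$ and $(N^TV)_j=V_j^{-1}V_j=\unit$, so $MU=N^TV=\unit$. Lemma~\ref{lem-permutation} then directly yields $\sat(C,\unit,\unit)=\sat(MBN,MU,N^TV)=\sat(B,U,V)_{\idp,\idp}=\sat(B,U,V)$, and analogously $\opt(C)=\opt(MBN)=\opt(B)_{\idp,\idp}=\opt(B)$.

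For the ``in particular'' part, the fact that $(\unit,\unit)$ is a Hungarian pair with respect to $C$ unravels to exactly the stated properties: the inequality $C_{ij}\geq U_i^{-1}U_i V_j^{-1}V_j=\unit$ of the Hungarian pair definition becomes $C_{ij}\geq \unit=0$, and the equality $\unit\cdots \unit\cdot\unit\cdots\unit=\perm C$ gives $\perm C=\unit$. Similarly, $(i,j)\in\sat(B,U,V)=\sat(C,\unit,\unit)$ iff $C_{ij}=\unit\cdot\unit=\unit$, which is the last stated equivalence.

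No step looks like an obstacle: the entire argument is bookkeeping with the min-plus inverse, and the substantive content is already contained in Lemma~\ref{lem-permutation}. The only thing to watch for is the distinction between conventional and min-plus notation when inverting diagonal matrices, but this is cosmetic.
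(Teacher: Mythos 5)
Your proof is correct and follows the same route as the paper: both apply Lemma~\ref{lem-permutation} with $M=\diag{U}^{-1}$, $N=\diag{V}^{-1}$ and $\sigma=\tau=\idp$, then read off the ``in particular'' statement from the Hungarian-pair definition. Your write-up just makes explicit the min-plus bookkeeping ($U_i^{-1}U_i=\unit$, $N^T=N$) that the paper leaves implicit.
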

\begin{proof}
Let $B,U,V,C$ be as in the corollary.
Then,  by Lemma~\ref{lem-permutation}
applied to $M=\diag{U}^{-1}$ and $N=\diag{V}^{-1}$, we obtain the
first assertion of the corollary, together with $\perm C=\unit$.
Then, $C_{ij}\geq \unit=0$ for all $i,j\in [n]$ and $(i,j)$ is an arc
of $\sat(B,U,V)$ if and only if $C_{ij}=\unit$.
\end{proof}

\begin{corollary}\label{coro2}
Let $B\in \rmin^{n\times n}$ be such that $\perm B\neq\zero$,
and let $(U,V)$ be a Hungarian pair with respect to $B$.
Then, $\opt(B)\subset\sat(B,U,V)$.
Moreover, the following are equivalent for $\sigma\in \Sym_n$:
(i) $\sigma$ is a permutation of  $\sat(B,U,V)$; (ii)
$\sigma$ is a permutation of $\opt(B)$; (iii) $\sigma$
is an optimal permutation of $B$.
\end{corollary}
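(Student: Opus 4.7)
\medskip

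\noindent\textbf{Proof proposal for Corollary~\ref{coro2}.}
The plan is to reduce to the normalized situation provided by Corollary~\ref{coro1}, where the entries of the matrix are nonnegative and the saturation arcs are exactly the zero entries; in that setting the whole statement becomes essentially a tautology.

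First, I would set $C := \diag{U}^{-1} B \diag{V}^{-1}$. By Corollary~\ref{coro1}, $(\unit,\unit)$ is a Hungarian pair with respect to $C$, we have $\perm C = \unit = 0$, every entry $C_{ij}\geq 0$, $\sat(B,U,V) = \sat(C,\unit,\unit)$ is the set of arcs $(i,j)$ with $C_{ij}=0$, and $\opt(B)=\opt(C)$. Moreover, by Lemma~\ref{lem-permutation} applied with $M=\diag{U}^{-1}$ and $N=\diag{V}^{-1}$, a permutation $\sigma\in\Sym_n$ is optimal for $B$ if and only if it is optimal for $C$. Hence it suffices to prove the statement for $C$, $\unit$, $\unit$ in place of $B$, $U$, $V$.

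Next I would establish the equivalence (i)~$\Leftrightarrow$~(iii). Because $C_{ij}\geq 0$ for all $i,j$, the weight
\[
|\sigma|_C = C_{1\sigma(1)}+\cdots+C_{n\sigma(n)}
\]
is a sum of nonnegative real numbers, and $\sigma$ is optimal for $C$ precisely when this sum equals $\perm C = 0$. Since each summand is $\geq 0$, the sum vanishes if and only if $C_{i\sigma(i)}=0$ for every $i$, i.e., if and only if each arc $(i,\sigma(i))$ lies in $\sat(C,\unit,\unit)$, which is the definition of $\sigma$ being a permutation of the saturation graph. This simultaneously proves (i)~$\Leftrightarrow$~(iii).

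The remaining assertions then follow directly. The inclusion $\opt(B)\subset \sat(B,U,V)$ holds because any arc of $\opt(B)$ is of the form $(i,\sigma(i))$ with $\sigma$ an optimal permutation, and by the implication (iii)~$\Rightarrow$~(i) just proved, every such arc belongs to $\sat(B,U,V)$. The equivalence (ii)~$\Leftrightarrow$~(iii) is then immediate: (iii)~$\Rightarrow$~(ii) holds by the very definition of $\opt(B)$ (an optimal $\sigma$ contributes all its arcs to $\opt(B)$), while (ii)~$\Rightarrow$~(i) follows from the inclusion $\opt(B)\subset\sat(B,U,V)$, and we have already shown (i)~$\Rightarrow$~(iii). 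There is no serious obstacle here; the only mild subtlety is noticing that the reduction provided by Corollary~\ref{coro1} transforms the problem into a nonnegative real assignment problem, after which ``sum of nonnegative terms equals zero'' does all the work.
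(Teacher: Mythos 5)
Your proposal is correct and uses the same key idea as the paper: reduce via Corollary~\ref{coro1} to the normalized matrix $C=\diag{U}^{-1}B\diag{V}^{-1}$ with $\perm C=\unit$ and all entries $\geq\unit$, then observe that a min-plus--optimal permutation has weight $\unit$, which forces every summand $C_{i\sigma(i)}$ to equal $\unit$. The only difference from the paper is the order of presentation — you establish the biconditional (i)~$\Leftrightarrow$~(iii) first and deduce $\opt(B)\subset\sat(B,U,V)$ from it, whereas the paper proves the inclusion first and then closes the cycle (i)~$\Rightarrow$~(iii)~$\Rightarrow$~(ii)~$\Rightarrow$~(i) — but the substance is identical.
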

\begin{proof}
Let $C$ be as in Corollary~\ref{coro1}.
Then, if $(i,j)$ is an arc of $\opt(C)$,
there exists $\sigma\in\Perm_n$ such that
$j=\sigma(i)$ and $|\sigma|_C=\perm C=\unit$.
Since all entries of $C$ are greater or equal to $\unit$, 
this implies that $C_{ij}=\unit$, and so
$(i,j)$ is an arc of $\sat(C,\unit,\unit)$.
Since $\opt(C)=\opt(B)$ and $\sat(C,\unit,\unit)=\sat(B,U,V)$,
this shows that $\opt(B)\subset\sat(B,U,V)$.

Let us show that for $\sigma\in \Sym_n$, (i)$\Rightarrow$(iii)$\Rightarrow$(ii)$\Rightarrow$(i).
Let $\sigma$ be a permutation of  $\sat(B,U,V)$, this means that
$(i,\sigma(i))$ is an arc of $\sat(B,U,V)$, for all $i \in [n]$.
Then  $C_{i\sigma(i)}=\unit$, for all  $i \in [n]$, which implies that
 $|\sigma|_C=\unit=\perm C$, and so $\sigma$ is an optimal permutation 
of $C$, hence an optimal permutation of $B$, by Lemma~\ref{lem-permutation},
which shows (i)$\Rightarrow$(iii).
By definition of $\opt(B)$, if $\sigma$ be an optimal permutation of $B$,
then $(i,\sigma(i))$ is an arc of $\opt(B)$, for all $i\in [n]$,
so $\sigma$ is a permutation of $\opt(B)$, which shows (iii)$\Rightarrow$(ii).
If now $\sigma$ be a permutation of  $\opt(B)$, then, for all $i\in [n]$,
$(i,\sigma(i))$ is an arc of $\opt(B)\subset \sat(B,U,V)$,  hence, 
$\sigma$ is a permutation of  $\sat(B,U,V)$, which shows (ii)$\Rightarrow$(i).
\end{proof}

\begin{corollary}\label{coro3}
Let $B\in \rmin^{n\times n}$ such that $\perm B\neq\zero$,
let $\sigma$ be an optimal permutation for $B$,
and let $(U,V)$ be  a Hungarian pair with respect to $B$.
Then,  the identity map is an optimal permutation for 
$\Psigmainvmin B$ and
 $(U_{\sigma^{-1}},V)$ is  a Hungarian pair with respect to $\Psigmainvmin B$.
Moreover, we have $\perm (\Psigmainvmin B)=\perm (B)$,
$\opt(\Psigmainvmin B)=\opt(B)_{\sigma^{-1},\idp}$,
and $\sat(\Psigmainvmin B,U_{\sigma^{-1}},V)=\sat(B,U,V)_{\sigma^{-1},\idp}$.
\end{corollary}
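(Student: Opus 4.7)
The plan is to derive Corollary~\ref{coro3} as a direct specialization of Lemma~\ref{lem-permutation}, by taking $M$ to be the min-plus permutation matrix $\Psigmainvmin$ and $N$ to be the identity. Both are monomial: we may write $M=\diag{\unit}\Psigmainvmin$ and $N=\diag{\unit}P_{\min}^{\idp}$, so in the notation of Lemma~\ref{lem-permutation} the scaling vectors are $W=X=\unit$ and the permutations are (respectively) our $\sigma^{-1}$ and $\idp$.

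The first step is to compute $MU$ and $N^T V$. By the very definition of a min-plus permutation matrix, $(\Psigmainvmin U)_i=U_{\sigma^{-1}(i)}$, which is $(U_{\sigma^{-1}})_i$; and $N^T V=V$ since $N$ is the identity. The second step is to read off the permanent identity from the Lemma: $\perm(\Psigmainvmin B)=(\perm M)(\perm B)(\perm N)=\unit\cdot\perm B\cdot\unit=\perm B$.

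The third step handles the optimality of $\idp$. Lemma~\ref{lem-permutation} asserts that $\nu$ is optimal for $MBN$ if and only if $\tau^{-1}\circ\nu\circ\sigma^{-1}$ is optimal for $B$, where $\sigma,\tau$ are the Lemma's permutations. With the Lemma's $\sigma$ replaced by our $\sigma^{-1}$ and its $\tau$ replaced by $\idp$, this becomes: $\nu$ is optimal for $\Psigmainvmin B$ iff $\nu\circ\sigma$ is optimal for $B$. Plugging in $\nu=\idp$ yields $\sigma$, which is optimal for $B$ by hypothesis, proving that $\idp$ is optimal for $\Psigmainvmin B$. The fourth step reads the remaining conclusions straight out of Lemma~\ref{lem-permutation}: the graph identity $\opt(MBN)=\opt(B)_{\sigma^{-1},\idp}$ becomes $\opt(\Psigmainvmin B)=\opt(B)_{\sigma^{-1},\idp}$; the Hungarian pair identity $(MU,N^T V)=(U_{\sigma^{-1}},V)$ gives the claimed Hungarian pair for $\Psigmainvmin B$; and finally $\sat(MBN,MU,N^TV)=\sat(B,U,V)_{\sigma^{-1},\idp}$ becomes the last equality.

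There is no real obstacle here; the only delicate point is keeping track of which $\sigma$ is which when specializing the Lemma's abstract notation (our $\sigma$ is the distinguished optimal permutation of $B$, whereas the Lemma's $\sigma$ plays the role of the left-acting permutation, hence should be instantiated to $\sigma^{-1}$). Once this identification is made, every assertion of the corollary is a line-by-line consequence of the Lemma.
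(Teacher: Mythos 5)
Your proof is correct and follows essentially the same route as the paper's: both proofs instantiate Lemma~\ref{lem-permutation} with $M=\Psigmainvmin$ and $N$ the identity, read off the optimality characterization to conclude $\idp$ is optimal for $\Psigmainvmin B$, and deduce the remaining identities directly from the Lemma. Your write-up is simply more explicit about the substitution (the Lemma's $\sigma$ becomes $\sigma^{-1}$, $\tau$ becomes $\idp$) and about the computation $MU=U_{\sigma^{-1}}$, $N^T V=V$, which the paper leaves implicit with ``The other properties follow from Lemma~\ref{lem-permutation}.''
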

\begin{proof}
From Lemma~\ref{lem-permutation},
$\nu$ is an optimal permutation for $\Psigmainvmin B$ if and only if
$\nu\circ \sigma$ is an optimal permutation for $B$.
Hence if $\sigma$ is an optimal permutation for $B$,
 the identity map is an optimal permutation for $\Psigmainvmin B$.
The other properties follow from Lemma~\ref{lem-permutation}.
\end{proof}

\begin{proposition}\label{prop-carac-hungarian}
Let $B\in \rmin^{n\times n}$ such that $\perm B=\unit$,
$(\unit,\unit)$ is a Hungarian pair with respect to $B$ and
the identity map is an optimal permutation of $B$.
Then $B\unit =\unit$, $\rhomin(B)=\unit$, and $\opt(B)$
is the critical graph of $B$.
\end{proposition}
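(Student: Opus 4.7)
The plan is to unfold the two hypotheses and reduce everything to elementary facts about min-plus matrices with nonnegative entries. First, from the Hungarian pair condition $(\unit,\unit)$ I would extract that $B_{ij}\geq \unit\otimes\unit=\unit$ for all $i,j$, so all entries of $B$ are ``nonnegative'' (i.e.\ $\geq 0$ in the usual order on $\R$, with $\zero=+\infty$ allowed). Combined with the optimality of the identity, the equality $|\idp|_B=B_{11}+\cdots+B_{nn}=\perm B=\unit=0$ forces $B_{ii}=\unit$ for every $i$. In particular, the loop $(i,i)$ exists in $G(B)$ and has weight $\unit$.

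From $B_{ii}=\unit$ together with $B_{ij}\geq\unit$ I would read off directly $(B\unit)_i=\min_j B_{ij}=B_{ii}=\unit$, which gives $B\unit=\unit$. In particular $\unit$ is a geometric eigenvalue of $B$, so $\rhomin(B)\leq \unit$. The reverse inequality follows from the minimum circuit mean formula~\eqref{e-2-2}: since every arc weight is at least $\unit$, every circuit mean is at least $\unit$. Thus $\rhomin(B)=\unit$, and the critical circuits of $B$ are exactly the circuits whose weight equals $\unit$.

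For the last assertion I would apply Corollary~\ref{coro2} to identify
\[
\opt(B)=\sat(B,\unit,\unit)=\{(i,j)\mid B_{ij}=\unit\}\enspace.
\]
Any critical circuit has total weight $\unit$; since all arc weights are $\geq\unit$, each of its arcs must carry weight $\unit$ and therefore lie in $\opt(B)$, so the critical graph is contained in $\opt(B)$. Conversely, take an arc $(i,j)\in\opt(B)$. By the equivalence (i)$\Leftrightarrow$(iii) in Corollary~\ref{coro2} there exists an optimal permutation $\sigma$ with $\sigma(i)=j$. From $|\sigma|_B=\perm B=\unit$ and the nonnegativity of the entries of $B$, every $B_{k\sigma(k)}$ equals $\unit$; hence the cycle of $\sigma$ passing through $i$ traces out a circuit of $G(B)$ all of whose arcs carry weight $\unit$, which is therefore a critical circuit containing $(i,j)$. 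This shows $\opt(B)$ is contained in the critical graph, completing the equality.

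No genuine obstacle is expected: the argument is a bookkeeping exercise combining Corollary~\ref{coro2} with the nonnegativity extracted from the two hypotheses. The only point requiring some care is the interpretation of a ``cycle of the permutation $\sigma$'' as an elementary circuit in the weighted digraph $G(B)$, which is what makes the converse inclusion work.
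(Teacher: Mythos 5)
Your argument for $B\unit=\unit$ and $\rhomin(B)=\unit$ is correct and matches the paper's. The genuine gap is in the inclusion \emph{(critical graph)}\ $\subseteq\opt(B)$, which rests on the identification $\opt(B)=\sat(B,\unit,\unit)=\set{(i,j)}{B_{ij}=\unit}$ that you attribute to Corollary~\ref{coro2}. That corollary only gives the one-sided containment $\opt(B)\subseteq\sat(B,\unit,\unit)$; the reverse fails in general. Indeed Corollary~\ref{coro4} (proved \emph{after} this proposition, using it) describes $\opt(B)$ as the union of the strongly connected components of $\sat(B,\unit,\unit)$, which is typically a proper subgraph. A concrete counterexample to your equality: take $n=3$, $B_{ii}=\unit$ for all $i$, $B_{12}=\unit$, and every other off-diagonal entry strictly positive. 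Then $\sat(B,\unit,\unit)$ contains the arc $(1,2)$, but no optimal permutation can send $1\mapsto 2$ (it would force $\sigma(2)\in\{1,3\}$ with $B_{2\sigma(2)}>\unit$), so $(1,2)\notin\opt(B)$.

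Consequently the inference ``each arc of a critical circuit has weight $\unit$, therefore lies in $\opt(B)$'' is not justified as written. Having weight $\unit$ only puts the arc in $\sat(B,\unit,\unit)$. What makes it lie in $\opt(B)$ is a separate argument which the paper supplies: since the critical circuit $c$ has weight $\unit$ and the identity is optimal (so $B_{ii}=\unit$ for all $i$), one can complete $c$ to a permutation by taking the identity on $[n]\setminus\{$nodes of $c\}$; the resulting permutation has weight $\unit=\perm B$, hence is optimal, hence all arcs of $c$ lie in $\opt(B)$. This second use of the hypothesis ``identity is an optimal permutation'' is essential and is missing from your proof. Your converse inclusion ($\opt(B)\subseteq$ critical graph) is fine and follows the paper's route.
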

\begin{proof}
Let $B$ be as in the proposition.
Since $(\unit,\unit)$ is a Hungarian pair with respect to $B$,
Corollary~\ref{coro1} shows that
all entries of $B$ are greater or equal to $\unit$, and
that $B_{ij}=\unit$ for all arcs $(i,j)$ of $\sat(B,\unit,\unit)$.
Then, by Corollary~\ref{coro2}, $B_{ij}=\unit$ for all arcs $(i,j)$ of 
an optimal permutation of $B$.
Since the identity map is an optimal permutation of $B$,
this implies that $B_{ii}=\unit$ for all $i$,
which implies $B\unit=\unit$.
Moreover, all circuits have a weight with respect to $B$ greater or equal
to $\unit$ and the circuits of an optimal permutation
have a weight equal to $\unit$. This implies that $\rhomin(B)=\unit$.

Let  $(i,j)$ be an arc of the critical graph of $B$.
By definition (see Section~\ref{sec-tropeig}),
there exists a circuit passing through $(i,j)$ 
with mean weight with respect to $B$ equal to $\unit$, 
so with weight equal to $\unit$.
Since the identity map is an optimal permutation of $B$,
any circuit  with weight $\unit$ can be completed into a permutation 
with weight $\unit$, by taking the identity on the complementary of this 
circuit, hence, $(i,j)$ is an arc of  $\opt(B)$.
Conversely, let  $(i,j)$ be an arc of  $\opt(B)$, then there exists
an optimal permutation $\sigma$ for $B$ such that $j=\sigma(i)$.
This implies (as above) that $B_{k\sigma(k)}=\unit$, for all $k\in [n]$.
Then, all the circuits of $\sigma$ have a weight equal to $\unit$,
and so are critical circuits. Since $(i,j)$ belongs to one of them, this shows
that $(i,j)$ is an arc of the critical graph of $B$.
\end{proof}

\begin{corollary}\label{coro4}
Let $B\in \rmin^{n\times n}$ be as in Proposition~\ref{prop-carac-hungarian}. 
Then, $\opt(B)$ is the disjoint union of the strongly connected components
of $\sat(B,\unit,\unit)$.
\end{corollary}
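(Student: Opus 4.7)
The plan is to combine Proposition~\ref{prop-carac-hungarian} with the description of $\sat(B,\unit,\unit)$ furnished by Corollary~\ref{coro1}, and then invoke the elementary fact that in any digraph the arcs lying on circuits are exactly those whose endpoints share a strongly connected component.

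More precisely, by Proposition~\ref{prop-carac-hungarian} we know that $\opt(B)$ coincides with the critical graph of $B$ and that $\rhomin(B)=\unit$. By Corollary~\ref{coro1}, every entry of $B$ satisfies $B_{ij}\geq\unit$, and an arc $(i,j)$ belongs to $\sat(B,\unit,\unit)$ if and only if $B_{ij}=\unit$. The first observation is then that a circuit $c$ of $G(B)$ has mean weight $\unit$ if and only if all of its arcs have weight $\unit$: indeed, the mean weight is an average of quantities that are all $\geq\unit=0$, so it equals $0$ exactly when each term does. Consequently, the critical graph of $B$ is precisely the union of those circuits of $G(B)$ all of whose arcs lie in $\sat(B,\unit,\unit)$, i.e., the union of all circuits of the graph $\sat(B,\unit,\unit)$.

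The remaining step is purely graph-theoretic: in any directed graph $H$, an arc $(i,j)$ belongs to some circuit of $H$ if and only if $i$ and $j$ lie in the same strongly connected component of $H$. The ``only if'' direction is immediate, since an arc on a circuit provides both a path from $i$ to $j$ (the arc itself) and a path from $j$ to $i$ (the rest of the circuit). The ``if'' direction uses the existence of a path from $j$ to $i$ inside the common component, which concatenated with the arc $(i,j)$ yields a circuit; note that the self-loops $(i,i)$, which by Proposition~\ref{prop-carac-hungarian} belong to $\sat(B,\unit,\unit)$, correspond to the trivial one-node components. Applying this to $H=\sat(B,\unit,\unit)$, we conclude that $\opt(B)$ is exactly the set of arcs of $\sat(B,\unit,\unit)$ both of whose endpoints lie in the same strongly connected component, which is what is meant by the disjoint union of the strongly connected components of $\sat(B,\unit,\unit)$.

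There is no serious obstacle: the proof is essentially a bookkeeping exercise combining the two previous results with the circuit/SCC characterization. The only subtle point to state carefully is the equivalence between the union of circuits of a digraph and the union of its strongly connected components, and in particular the role of the self-loops $(i,i)$ guaranteed by $B\unit=\unit$, which ensure that isolated nodes of $\sat(B,\unit,\unit)$ still contribute trivially to both sides of the identity.
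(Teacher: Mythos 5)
Your proof is correct and follows essentially the same route as the paper's: identify $\opt(B)$ with the critical graph via Proposition~\ref{prop-carac-hungarian}, observe via Corollary~\ref{coro1} that critical circuits are precisely the circuits of $\sat(B,\unit,\unit)$ (since all entries are $\geq\unit$, a circuit has mean $\unit$ iff every arc has weight $\unit$), and then invoke the standard fact that the union of circuits of a digraph equals the union of its strongly connected components. The paper phrases the middle step by appeal to a known characterization of the critical graph as the SCCs of the saturation graph of an eigenvector (with a citation) before reproducing the argument, whereas you argue it directly; this is a stylistic rather than substantive difference.
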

\begin{proof}
By Proposition~\ref{prop-carac-hungarian}, $\unit$ is a fixed point 
of $B$, and $\opt(B)$ is the critical graph of $B$.
It is known that the critical graph of $B$ is
the disjoint union of the strongly connected components of
the saturation graph of $B$ on any eigenvector $U$ of $B$, that is the
set of arcs $(i,j)$ such that $B_{ij} U_j=U_i$
(see for instance~\cite{abg04b}).
When $U=\unit$, this saturation graph coincides with $\sat(B,\unit,\unit)$,
which shows the corollary.
Let us reproduce the proof in that case.
Indeed, let $(i,j)$ be an arc of a strongly connected component of
the saturation graph of $B$ on the eigenvector $\unit$. Then there exists
a path from $j$ to $i$ in this saturation graph.
Concatenating this path with the arc $(i,j)$, we get a circuit,
the weights with respect to $B$ of which are all equal to $\unit$, hence a 
critical circuit of $B$. This shows that $(i,j)$ is an arc of the critical graph
of $B$. Since the critical graph of $B$ is the
union of its strongly connected components, and is always included
in the saturation graph, we get the desired property.
\end{proof}

\begin{corollary}\label{coro5}
Let $B\in \rmin^{n\times n}$ such that $\perm B\neq\zero$,
$(U,V)$ be a Hungarian pair with respect to $B$,
and suppose that the identity map is an optimal permutation of $B$.
Then, $\opt(B)$ is the disjoint union of the strongly connected components
of $\sat(B,U,V)$.
\end{corollary}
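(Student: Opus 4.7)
The strategy is to reduce to the normalized situation of Corollary~\ref{coro4} by the change of matrix described in Corollary~\ref{coro1}. Set $C:=\diag{U}^{-1}B\diag{V}^{-1}$. By Corollary~\ref{coro1}, $(\unit,\unit)$ is a Hungarian pair for $C$, we have $\perm C=\unit$, and moreover
\[
\sat(B,U,V)=\sat(C,\unit,\unit),\qquad \opt(B)=\opt(C)\enspace.
\]

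It remains to check that the identity map is an optimal permutation of $C$ as well. This follows from Lemma~\ref{lem-permutation} applied to $M=\diag{U}^{-1}$ and $N=\diag{V}^{-1}$ (so that $\sigma=\tau=\idp$): a permutation $\nu$ is optimal for $C=MBN$ if and only if it is optimal for $B$. Since we assume the identity is optimal for $B$, the same holds for $C$.

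The matrix $C$ therefore satisfies the hypotheses of Proposition~\ref{prop-carac-hungarian} and, a fortiori, those of Corollary~\ref{coro4}. Applying the latter, $\opt(C)$ is the disjoint union of the strongly connected components of $\sat(C,\unit,\unit)$. Translating back through the identifications above yields the claim for $B$.
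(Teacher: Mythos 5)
Your proof is correct and follows essentially the same route as the paper's: normalize by $C=\diag{U}^{-1}B\diag{V}^{-1}$ via Corollary~\ref{coro1}, use Lemma~\ref{lem-permutation} to transfer the optimality of the identity permutation, then invoke Proposition~\ref{prop-carac-hungarian} and Corollary~\ref{coro4}. No discrepancies.
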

\begin{proof}
Corollary~\ref{coro1} shows that the matrix $C$ defined there satisfies
$\perm C=\unit$, that
$(\unit,\unit)$ is a Hungarian pair with respect to $C$, together with
$\opt(B)=\opt(C)$ and $\sat(B,U,V)=\sat(C,\unit,\unit)$.
Moreover, by Lemma~\ref{lem-permutation}, the optimal
permutations for $B$ and $C$ coincide, so that 
the identity map is an optimal permutation of $C$.
Hence, $C$ satisfies the assumptions of Proposition~\ref{prop-carac-hungarian},
so that, by Corollary~\ref{coro4}, $\opt(C)$ 
is the disjoint union of the strongly connected components
of $\sat(C,\unit,\unit)$. With the above equalities, 
this shows the corollary.
\end{proof}

The above results show that $\opt(B)$ can be easily be constructed from
any Hungarian pair with respect to $B$, as follows.

\begin{corollary}
Let $B\in \rmin^{n\times n}$ such that $\perm B\neq\zero$.
Let $(U,V)$ be a Hungarian pair with respect to $B$,
and $\sigma$ be a permutation of $\sat(B,U,V)$.
Then, $\opt(B)_{\sigma^{-1},\idp}$ is the disjoint union of the 
strongly connected components of $\sat(B,U,V)_{\sigma^{-1},\idp}$.
\end{corollary}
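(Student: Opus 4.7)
The plan is to reduce the statement to Corollary~\ref{coro5} by left-multiplying $B$ with the min-plus permutation matrix $\Psigmainvmin$, so that the identity becomes an optimal permutation of the transformed matrix.

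First, I would observe that by Corollary~\ref{coro2}, the hypothesis that $\sigma$ is a permutation of $\sat(B,U,V)$ is equivalent to saying that $\sigma$ is an optimal permutation for $B$. This puts us in the setting of Corollary~\ref{coro3}, which I would then invoke: setting $B':=\Psigmainvmin B$, $U':=U_{\sigma^{-1}}$, $V':=V$, we have that the identity map is an optimal permutation for $B'$, that $(U',V')$ is a Hungarian pair with respect to $B'$, and that
\[
\opt(B')=\opt(B)_{\sigma^{-1},\idp},\qquad \sat(B',U',V')=\sat(B,U,V)_{\sigma^{-1},\idp}.
\]
Also $\perm B'=\perm B\neq \zero$, so all hypotheses of Corollary~\ref{coro5} are met for the triple $(B',U',V')$.

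Applying Corollary~\ref{coro5} to $B'$ then gives that $\opt(B')$ is the disjoint union of the strongly connected components of $\sat(B',U',V')$. Substituting the two identities above, we obtain that $\opt(B)_{\sigma^{-1},\idp}$ is the disjoint union of the strongly connected components of $\sat(B,U,V)_{\sigma^{-1},\idp}$, which is exactly the claim.

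I do not expect any real obstacle: the whole point of the machinery built in Lemma~\ref{lem-permutation} and Corollaries~\ref{coro1}--\ref{coro5} was to cover precisely this kind of reduction, so the proof is a one-line application of Corollary~\ref{coro5} after the change of variable $B\mapsto \Psigmainvmin B$. The only mild care is to verify that the $\cdot_{\sigma^{-1},\idp}$ operation commutes with ``taking strongly connected components'', but this is immediate because $\cdot_{\sigma^{-1},\idp}$ is induced by the graph isomorphism relabelling node $i$ as $\sigma^{-1}(i)$ on the source side only, which preserves the strongly connected component decomposition.
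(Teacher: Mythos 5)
Your proof is correct, and it is essentially the argument the paper intends: the paper does not write out an explicit proof for this final corollary (it merely says ``the above results show\dots''), so the expected route is precisely the one you took, namely Corollary~\ref{coro2} to turn the hypothesis into ``$\sigma$ is an optimal permutation of $B$'', then Corollary~\ref{coro3} to pass to $B'=\Psigmainvmin B$ with the identity as an optimal permutation and to transport $\opt$ and $\sat$, and finally Corollary~\ref{coro5} applied to $B'$. One small remark: the worry in your last paragraph about the relabelling $\cdot_{\sigma^{-1},\idp}$ commuting with the strongly connected component decomposition is unnecessary --- your argument never uses such a commutation, since both $\opt(B)_{\sigma^{-1},\idp}$ and $\sat(B,U,V)_{\sigma^{-1},\idp}$ are produced \emph{directly} by the identities of Corollary~\ref{coro3} and substituted into the conclusion of Corollary~\ref{coro5} as stated; there is no step where you first form SCCs and then relabel.
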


\section{Proof of Theorems~\ref{th-1} and~\ref{th-1-gen}}

We prove here the different assertions in
Theorems~\ref{th-1} and~\ref{th-1-gen}.
Since the intersection of generic sets is a generic set, 
it suffices to prove separately
that each point of Theorem~\ref{th-1-gen} 
holds for generic values of the parameters $(a_k)_{ij}$.

Let us introduce some additional notation.
For any $n\geq 1$ and $d\geq 0$,
the characteristic polynomial  $\det(a)$ of the
matrix polynomial $a=a_0+\iY a_1+\cdots+\iY^d a_d$ with coefficients
in $\C^{n\times n}$ can be thought of as a (formal) complex 
polynomial, denoted $\ch$, in the variables
$\iY$ and $(a_k)_{ij}$ with $k=0,\ldots, d$ and $i,j=1,\ldots , n$,
the coefficients of which are integers.
Hence, the coefficient $\ch_\ell$, $0\leq \ell\leq nd$,
of $\iY^\ell$ in $\ch$ can be thought of as a (formal) complex 
polynomial in the variables
$(a_k)_{ij}$ with $k=0,\ldots, d$ and $i,j=1,\ldots , n$.
In the sequel, we shall write $p(a)$ 
when $p$ is a polynomial in the
variables $(a_k)_{ij}\in\C$ with $k=0,\ldots, d$ and $i,j=1,\ldots , n$,
and  $a=a_0+\iY a_1+\cdots+\iY^d a_d$.
We have in particular $\ch_\ell(a_0+\iY I)=\tr_{n-\ell}(a_0)$.
Similarly, the characteristic polynomial  $\perm a$ of the min-plus
(formal) matrix polynomial $a=a_0\oplus\iY a_1\oplus\cdots\oplus\iY^d a_d$,
with coefficients in $\rmin^{n\times n}$ can be thought of as a (formal) min-plus
 polynomial, denoted  $\ch^{\min}$, in the variables
$\iY$ and $(a_k)_{ij}$ with $k=0,\ldots, d$ and $i,j=1,\ldots , n$,
the coefficients of which are all equal to $\unit$, and 
the coefficient $\ch^{\min}_\ell$, $0\leq \ell\leq nd$,
of $\iY^\ell$ in $\ch^{\min}$ can be thought of as a (formal) min-plus
polynomial in the variables
$(a_k)_{ij}$ with $k=0,\ldots, d$ and $i,j=1,\ldots , n$.
Again, we shall write  $p(a)$ instead of $p$, 
when  $p$ is a min-plus formal polynomial in the
variables $(a_k)_{ij}\in\rmin$ with $k=0,\ldots, d$ and $i,j=1,\ldots , n$,
and $a=a_0\oplus\iY a_1\oplus\cdots\oplus\iY^d a_d$.
We shall also use the notation $\eval{p}(a)$  for the associated 
polynomial function.
We have in particular $\ch^{\min}_\ell(a_0\oplus\iY I)=\trm_{n-\ell}(a_0)$,
where $\trm_k$ is the min-plus (formal) $k$-th trace given by
the formula~\eqref{trmin}.
In the sequel, $\evalch{\ell}$ will denote the polynomial function associated
to the formal polynomial  $\ch^{\min}_\ell$.

In all the proofs of the present section,
we consider a matrix polynomial $\sA$ over $\cont$
as in~\eqref{defAepsilon} satisfying~\eqref{pencil-asymp}
and denote by $A$ the min-plus matrix polynomial~\eqref{defAtrop}
with coefficients $A_k$ as in~\eqref{pencil-asymp}.

\subsection{Proof of the first properties in
Theorems~\ref{th-1} and~\ref{th-1-gen}}
Let us prove the  following implications which correspond to the
first properties stated in Theorems~\ref{th-1} and~\ref{th-1-gen}.
\begin{enumerate}
\item\label{impl1} If $\sA$ is regular, then so does $A$.
\item\label{impl2} If $A$ is regular, then 
for generic values of the parameters $(a_k)_{ij}$,
the matrix polynomial $\sA$ is regular.
\end{enumerate}
From the above notations, 
it is easy to see that the min-plus polynomial $\dex{\ch}$ 
(defined as in~\eqref{qQ}) is equal to $\ch^{\min}$ and
that $\dex{\ch_\ell}=\ch^{\min}_\ell$.
Then, by Lemma~\ref{gener}, we obtain that for $a$, $A$ and $\sA$ as
above,
we have  $\ch_\ell(\sA)\simeq (\ch_\ell)^\sat_A(a)\eps^{\evalch{\ell}(A)}$.
Moreover,  by Lemma~\ref{gener} again, 
for any fixed $A$ such that $\evalch{\ell}(A)\neq \zero$,
we have $(\ch_\ell)^\sat_A(a)\neq 0$
for generic values of $a$ (so of the $(a_k)_{ij}$),
hence the equivalence  $\ch_\ell(\sA)\sim (\ch_\ell)^\sat_A(a)\eps^{\evalch{\ell}(A)}$.

Using these properties, we  see that if $A$ is singular,
then $\evalch{\ell}(A)=\zero$ for all $\ell\geq 0$, hence
$\ch_\ell(\sA)\simeq (\ch_\ell)^\sat_A(a)\eps^{+\infty}$ and so $\ch_\ell(\sA)=0$,
which implies that $\det(\sA)=\ch(\sA)=0$ and so $\sA$ is singular.
Conversely, if $A$ is regular, then there exists $\ell\geq 0$ 
(for instance the degree of $\perm A$) such that
$\alpha=\evalch{\ell}(A)\neq \zero$, and since for generic values of $a$,
$(\ch_\ell)^\sat_A(a)\neq 0$, 
and $\ch_\ell(\sA)\sim (\ch_\ell)^\sat_A(a)\eps^{\alpha}$,
we get that $\ch_\ell(\sA)\neq 0$.
Hence, $\det(\sA)=\ch(\sA)$ is non identically zero, 
so the matrix polynomial $\sA$ is regular for generic values of $a$.

\subsection{Proof of Points~\eqref{th-1-val} and~\eqref{th-1-deg}  of
 Theorems~\ref{th-1} and~\ref{th-1-gen}}

Assume first that $\sA$ is regular as in Theorem~\ref{th-1}.
By the property~\ref{impl1}  already proved in the previous section,
$A$ is also regular. Then, $P_A=\perm A$ and $P_{\sA_\epsilon}=\det (\sA_\epsilon)$ 
have finite valuations and degree,
such that $0\leq \val P_A\leq \deg P_A\leq nd$
and $0\leq \val P_{\sA_\epsilon}\leq \deg P_{\sA_\epsilon}\leq nd$.
The inequalities $\val P_{\sA_\epsilon}\geq \val P_A$ and 
$\deg P_{\sA_\epsilon}\leq \deg P_A$ are trivial when $\val P_A=0$ and
$\deg P_A=nd$.
When $\val P_A>0$, 
$\zero$ is an eigenvalue of $A$ with multiplicity $m_{\zero,A}=\val P_A$,
and $(P_A)_{\ell}=\evalch{\ell}(A)=\zero$, for $\ell<\val P_A$.
Then, by the same arguments as in the previous section,
we have $\ch_\ell(\sA)=0$, for $\ell<\val P_A$,
hence, for all $\eps>0$,  $\val P_{\sA_\epsilon}\geq \val P_A$, and 
$0$ is an eigenvalue of $\sA_\eps$ with multiplicity $\geq m_{\zero,A}$.
Similarly, when $\deg P_A<nd$, $-\infty$ is an 
eigenvalue of $A$ with multiplicity $m_{-\infty,A}=nd-d_A$,
and $(P_A)_{\ell}=\evalch{\ell}(A)=\zero$, for $\ell>\deg P_A$.
Then, by the same arguments as in the previous section,
we have $\ch_\ell(\sA)=0$, for $\ell>\deg P_A$,
hence  $\deg P_{\sA_\epsilon}\leq \deg P_A$,
and $\infty$ is an eigenvalue of $\sA_\eps$ with multiplicity $\geq m_{-\infty,A}$.
This proves Points~\eqref{th-1-val} and~\eqref{th-1-deg}  of 
 Theorem~\ref{th-1}.

Assume now that $A$ is regular as in Theorem~\ref{th-1-gen}.
By the property~\ref{impl2}  already proved in the previous section,
$\sA$ is regular, for generic values of the parameters $(a_k)_{ij}$.
Then, by what is already proved above, for
these generic values of the parameters $(a_k)_{ij}$,
the inequalities $\val P_{\sA_\epsilon}\geq \val P_A$ and 
$\deg P_{\sA_\epsilon}\leq \deg P_A$ hold.
By the definition of the valuation and degree, we have
$(P_A)_{\ell}=\evalch{\ell}(A)\neq \zero$, for
$\ell= \val P_A$ and $\ell=\deg P_A$.
Moreover, using Lemma~\ref{gener}, for both values of $\ell$, we have 
$\ch_\ell(\sA)\simeq (\ch_\ell)^\sat_A(a)\eps^{\evalch{\ell}(A)}$,
and for generic values of $a$, we have $(\ch_\ell)^\sat_A(a)\neq 0$,
and  $\ch_\ell(\sA)\sim (\ch_\ell)^\sat_A(a)\eps^{\alpha}$,
so $\ch_\ell(\sA)\neq 0$.
Hence,  for these generic values of the parameters $(a_k)_{ij}$,
we have  $\val P_{\sA_\epsilon}= \val P_A$ and 
$\deg P_{\sA_\epsilon}=\deg P_A$.
Since the intersection of generic sets is a generic set, 
all the above properties hold for generic values of $a$:
$\sA$ is regular, $\val P_{\sA_\epsilon}= \val P_A$ and  
$\deg P_{\sA_\epsilon}=\deg P_A$.

\subsection{Proof of Point~\eqref{th-1-finite} of
 Theorems~\ref{th-1} and~\ref{th-1-gen}}
Let $\gamma, \; m_{\gamma,A}, \; m'_{\gamma,A}, \; m''_{\gamma,A}$  be as in
Point~\eqref{th-1-finite} of  Theorems~\ref{th-1}.
Let $(U,V)$ be a Hungarian pair $(U,V)$ with respect to $\eval A(\gamma)$,
and let us first consider the case where $G=\sat(\eval A(\gamma),U,V)$,
and $G_k=\G_{k}(A,\gamma)\cap G$ for all $0\leq k\leq d$.

Let us add some notations.
For any $U\in\R^n$, we denote 
by $\diagp{U}$ the diagonal $n\times n$ matrix over $\cont$ such that
$(\diagp{U})_{ii}=\epsilon^{U_i}$ (and $(\diagp{U})_{ij}=0$ for $i\neq j$).
Then $\diagp{U}$ is invertible (in $\cont^{n\times n}$)
and $\diagp{U}^{-1}=\diagp{-U}$.

Consider the scaled matrix polynomial
\[ \sB_\epsilon= \diagp{-U} \sA_\epsilon(\epsilon^{\gamma} \iY)
\diagp{-V} \enspace ,\]
or, making explicit the coefficients of $\sB_\epsilon$,
\[
\sB_\epsilon = \sB_{\epsilon,0}+ \iY \sB_{\epsilon,1} +\cdots+\iY^d\sB_{\epsilon,d}  \enspace .
\]
Then, using for instance Lemma~\ref{gener}, we get
that for every $k=0,\ldots, d$, 
\begin{equation}\label{pencil-asymp-b}
(\sB_{\eps,k})_{ij} \simeq (a_k)_{ij}\epsilon^{(B_k)_{ij}}\enspace,
\qquad \mrm{ for all }  1\leq i,j\leq n \enspace ,
\end{equation}
where $B_k=((B_k)_{ij})\in\rmin^{n\times n}$ is such that
$B_k=\diag{-U} \gamma^k A_k \diag{-V}
=\diag{U}^{-1} \gamma^k A_k \diag{V}^{-1}$.
Let us denote by $B$ the min-plus matrix polynomial
$B=B_0\oplus \cdots \oplus \iY^d B_d$.
Then $\unit$ is an eigenvalue of $B$ with multiplicity $m_{\unit,B}=m_{\gamma,A}$,
and using the appropriate notations, we have
$m'_{\unit,B}=m'_{\gamma,A}$ and $m''_{\unit,B}=m''_{\gamma,A}$.
Moreover, $\eval{B}(\unit)=\diag{U}^{-1}\eval{A}(\gamma) \diag{V}^{-1}$,
and since $(U,V)$ is a  Hungarian pair  with respect to $\eval A(\gamma)$,
we get, by Corollary~\ref{coro1}, that 
$(\unit,\unit)$ is a   Hungarian pair  with respect to $\eval B(\unit)$,
that $\perm \eval B(\unit)=\unit$, 
and $G=\sat(\eval A(\gamma),U,V)=\sat(\eval{B}(\unit),\unit,\unit)$.
We also have $ \G_{k}(A,\gamma)=\G_{k}(B,\unit)$.
So we are reduced to the case where $\gamma=\unit$, and
$U=V=\unit$.
Moreover, $(B_k)_{ij}\geq \eval{B_{ij}}(\unit)\geq \unit$ 
for all $k=0,\ldots, d$ and $i,j\in [n]$,
and we have $(B_k)_{ij}= \unit$ if and only if $(i,j)\in G_k=\G_{k}(B,\unit)
\cap G$. Using~\eqref{pencil-asymp-b}, this implies that 
\[ \lim_{\epsilon\to 0} \sB_\epsilon= a^{(\gamma)}\enspace,\]
where $a^{(\gamma)}$ is given in~\eqref{pencilsat}.
This implies that 
$\lim_{\epsilon\to 0} \det( \sB_\epsilon)= \det (a^{(\gamma)})$.

Using the above notations,
we have that the formal characteristic polynomials of
$\sB$ and $B$ are given by 
$\det (\sB_\eps)=\sum_{\ell=0}^{nd}\ch_{\ell}(\sB_\eps)\iY^\ell$
and $\perm B=\oplus_{\ell=0}^{nd}\evalch{\ell}(B)\iY^\ell$.
Moreover, by Lemma~\ref{gener},  we have
$\ch_\ell(\sB_\eps)\simeq (\ch_\ell)^\sat_B(a)\eps^{\evalch{\ell}(B)}$.
Hence the polynomial $\sP=\det (\sB)\in\cont[\iY]$
satisfies the assumptions of Theorem~\ref{th3-local} 
with the degree $nd$ instead of $n$, $P=\perm B$,
and $p=\sum_{\ell=0}^{nd}(\ch_\ell)^\sat_B(a)$.
Moreover the scalar $c=\unit$ is a finite root of $P$ with multiplicity 
$m=m_{\gamma,A}$, and we have $m'=m'_{\gamma,A}$ in Theorem~\ref{th3-local}.
Since $\eval{P}(\unit)=\perm \eval B(\unit)=\unit$, we get that
$p^{(\unit)}=\lim_{\epsilon\to 0}\det( \sB_\epsilon)$ and so the above 
properties show that $p^{(\unit)}= \det (a^{(\gamma)})$.

Assume now that the matrix polynomial  $a^{(\gamma)}$ is regular.
This means that  $p^{(\unit)}$ is non identically zero.
Applying Theorem~\ref{th3-local}, we get that if it has $m_\gamma\geq 1$
non-zero eigenvalues, $\lambda_1,\ldots,\lambda_{m_\gamma}$,
then, the matrix polynomial $\sB_\epsilon$ has $m_\gamma$ eigenvalues
$\sM_{\epsilon,1},\ldots,\sM_{\epsilon,m_\gamma}$
with limits $\lambda_1,\ldots,\lambda_{m_\gamma}$. This implies that 
 $\sA_\epsilon$ has $m_\gamma$ eigenvalues
$\sL_{\epsilon,i}=\epsilon^{\gamma} \sM_{\epsilon,i}$, $i=1,\ldots,m_\gamma$,
with first order asymptotics of the form
\(
\sL_{\epsilon,i}\sim \lambda_i \epsilon^{\gamma}
\).
If $0$ is an eigenvalue of the matrix polynomial $a^{(\gamma)}$
with multiplicity $m'_\gamma$, then the valuation of
$p^{(\unit)}$ is equal to $m'_\gamma$, hence applying Theorem~\ref{th3-local}, 
we get that the matrix polynomial $\sB_\epsilon$ 
has precisely $m'_\gamma$ eigenvalues converging to $0$.
Finally, the remaining $m''_\gamma=nd-m_\gamma-m'_\gamma$ 
eigenvalues of $\sB_\epsilon$ have a modulus 
converging to infinity. Hence, 
the matrix polynomial $\sA_\epsilon$ has precisely $m'_\gamma$
eigenvalues $\sL_\epsilon$ such that
$\sL_\eps\simeq 0 \epsilon^{\gamma}$, and $m''_\gamma=nd-m_\gamma-m'_\gamma$ 
eigenvalues $\sL_\epsilon$ 
such that the modulus of $\epsilon^{-\gamma}\sL_\eps$
converges to infinity.
Finally the inequalities $m_\gamma\leq m_{\gamma,A}$,
$m'_\gamma\geq m'_{\gamma,A}$ and $m''_\gamma\geq m''_{\gamma,A}$ are deduced 
from Theorem~\ref{th3-local}.
This finishes the
proof of Point~\eqref{th-1-finite} of  Theorem~\ref{th-1} 
in the case $G=\sat(\eval A(\gamma),U,V)$.

Let us show Point~\eqref{th-1-finite} of  Theorem~\ref{th-1-gen}.
Let us fix the matrix polynomial $A$ and so the matrix polynomial $B$.
By Lemma~\ref{gener},  for all $\ell$ such that 
$P_{\ell}=\evalch{\ell}(B)\neq \zero$, we have that,
for generic values of $a$, $p_{\ell}=(\ch_\ell)^\sat_B(a)\neq 0$.
Since $\unit$ is a finite root of $P$ with multiplicity $m_{\gamma,A}$,
and $m'_{\gamma,A}$  is the sum of the multiplicities of all the 
roots of $P$ greater than $\unit$,
we obtain, from Corollary~\ref{carac-mul},
that $P_{\ell}=\eval{P}(\unit)=\unit\neq \zero$ for
$\ell=m'_{\gamma,A}$ and $\ell=m'_{\gamma,A}+m_{\gamma,A}$.
Then, for generic values of $a$, we have $p_{\ell}\neq 0$
for $\ell=m'_{\gamma,A}$ and $\ell=m'_{\gamma,A}+m_{\gamma,A}$,
which implies that the polynomial
$p^{(\unit)}$ of~\eqref{defpc} is non zero and that its valuation is equal
to $m'_{\gamma,A}$ and its degree is equal to $m'_{\gamma,A}+m_{\gamma,A}$.
Then, for generic values of $a$,  $m_\gamma= m_{\gamma,A}$ and
$m'_\gamma=m'_{\gamma,A}$, and so $m''_\gamma=m''_{\gamma,A}$, 
which finishes the
proof of Point~\eqref{th-1-finite} of  Theorem~\ref{th-1-gen} 
in the case $G=\sat(\eval A(\gamma),U,V)$.

To prove the same results when $G=\opt(\eval A(\gamma))$, we shall
use the following lemma.

\begin{lemma}
Let $b$ be a matrix polynomial with coefficients in $\C^{n\times n}$ and degree $d$,
let $B\in\rmin^{n\times n}$ be a min-plus matrix such that $\perm B\neq\zero$,
and let $(U,V)$  be a  Hungarian pair with respect to $B$.
Then, the matrix polynomials $b^{G}:=b_0^G+\cdots + \iY^d b_d^G$ defined
respectively with $G=\sat(B,U,V)$ and with $G=\opt(B)$
have the same eigenvalues.
\end{lemma}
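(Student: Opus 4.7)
The plan is to show the stronger statement that the two matrix polynomials have literally identical characteristic polynomials, and then the equality of eigenvalues is immediate. The idea is to expand $\det(b^G(\iY))$ by the Leibniz formula and use Corollary~\ref{coro2} to observe that the only surviving terms in both expansions are indexed by the optimal permutations of $B$, on which $b^{\opt}$ and $b^{\sat}$ agree.

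More concretely, I would first write
\[ \det(b^G(\iY)) = \sum_{\tau\in\Sym_n} \sgn(\tau) \prod_{i=1}^n \sum_{k=0}^d \iY^k (b_k^G)_{i\tau(i)} \enspace . \]
Since $(b_k^G)_{ij}=0$ for $(i,j)\notin G$, the contribution of a permutation $\tau$ is nonzero only if $(i,\tau(i))\in G$ for every $i\in[n]$, i.e.\ only if $\tau$ is a permutation of $G$. Apply this observation with $G=\sat(B,U,V)$ and with $G=\opt(B)$.

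By Corollary~\ref{coro2}, the permutations of $\sat(B,U,V)$, the permutations of $\opt(B)$, and the optimal permutations of $B$ all coincide; call this common set $\mathcal{S}$. Moreover, $\opt(B)\subset \sat(B,U,V)$ by the same corollary, so for every $\tau\in\mathcal{S}$, every $i\in[n]$ and every $k\in\{0,\ldots,d\}$ we have $(i,\tau(i))\in\opt(B)\subset\sat(B,U,V)$, and hence $(b_k^{\opt})_{i\tau(i)}=(b_k)_{i\tau(i)}=(b_k^{\sat})_{i\tau(i)}$. Therefore the Leibniz expansions of $\det(b^{\opt}(\iY))$ and $\det(b^{\sat}(\iY))$ coincide term by term, so the two characteristic polynomials are identical, and in particular they have the same roots, i.e.\ the same eigenvalues.

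There is essentially no obstacle beyond setting up the Leibniz expansion carefully; the content of the lemma is entirely packaged in the identification of the permutations of $\sat(B,U,V)$ with those of $\opt(B)$, which is already established in Corollary~\ref{coro2}. The only point to be mildly careful about is that the matrix polynomial $b^G(\iY)$ must be expanded as a polynomial in $\iY$ with matrix coefficients $b_k^G$, and not just entrywise---but this is routine.
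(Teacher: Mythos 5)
Your proof is correct, and it takes a genuinely different (and more direct) route than the paper's. You establish the stronger statement that the two characteristic polynomials coincide as elements of $\C[\iY]$: in the Leibniz expansion of $\det(b^G(\iY))$, a permutation $\tau$ contributes a nonzero term only if $\tau$ is a permutation of $G$; by Corollary~\ref{coro2} the set of such $\tau$ is the same for $G=\sat(B,U,V)$ and $G=\opt(B)$, and since $\opt(B)\subset\sat(B,U,V)$, both truncations $b_k^{\opt}$ and $b_k^{\sat}$ agree with $b_k$ on every arc $(i,\tau(i))$ that survives. So the two expansions agree term by term, and you only need Corollary~\ref{coro2}. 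The paper instead applies Corollary~\ref{coro3} to normalize by an optimal permutation, then uses Corollary~\ref{coro5} to identify $\opt$ with the union of strongly connected components of $\sat$, and concludes by writing the normalized matrix polynomial in block-triangular form with the same diagonal blocks for both choices of $G$. Both arguments in fact yield equality of characteristic polynomials, but yours makes that equality explicit, avoids the permutation-normalization step, and does not need the strongly-connected-component machinery of Corollaries~\ref{coro3}--\ref{coro5}, so it is both shorter and a little more transparent.
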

\begin{proof}
Let $\sigma$ be an optimal permutation for $B$.
Then, by Corollary~\ref{coro3}, the identity map is an optimal permutation for 
$C:=\Psigmainvmin B$, $(U_{\sigma^{-1}},V)$ is  a Hungarian pair with respect to $C$, and we have $\opt(C)=\opt(B)_{\sigma^{-1},\idp}$,
and $\sat(C,U_{\sigma^{-1}},V)=\sat(B,U,V)_{\sigma^{-1},\idp}$.
Then, by Corollary~\ref{coro5},
$\opt(C)$ is 
 the disjoint union of the strongly connected components
of $\sat(C,U_{\sigma^{-1}},V)$.
Multiplying the matrix polynomials $b$ and $b^G$ on the left by $P^{\sigma^{-1}}$,
and using the above notation, 
we deduce that $P^{\sigma^{-1}} b^{G}=c^{G'}$,
with $c=P^{\sigma^{-1}}b$ and $G'=G_{\sigma^{-1},\idp}$.
Since $P^{\sigma^{-1}}$ is invertible, the sequence of eigenvalues
of $b^G$ is the same as the one of  $P^{\sigma^{-1}} b^{G}$,
hence the same as the one of $c^{G'}$.
If $G'$ has several strongly connected components,
then, up to a permutation, the matrix polynomial
$c^{G'}$ is bloc triangular, with diagonal blocs
$c^{S}$, for each strongly connected component $S$ of $G'$.
Then, the sequence of eigenvalues of $c^{G'}$
is the disjoint union of the sequences of eigenvalues of the
matrix polynomials  $c^S$.
Since the sequence of eigenvalues
of $b^G$ is the same as the one of $c^{G'}$,
and, by the above properties, the strongly connected components of 
$G'=G_{\sigma^{-1},\idp}$ are the same for  $G=\opt(B)$ and $G=\sat(B,U,V)$,
we deduce that the sequence of eigenvalues of $b^G$ are the same
for $G=\opt(B)$ and for $G=\sat(B,U,V)$.
\end{proof}
Applying this result to the matrix polynomial 
$b= a_0^{\G_{0}(A,\gamma)}+\cdots +\iY^d a_d^{\G_{d}(A,\gamma)}$, 
and the min-plus matrix $B=\eval A(\gamma)$,
we get that the matrix polynomials $a^{(\gamma)}$ given in~\eqref{pencilsat},
with $G_k=\G_{k}(A,\gamma)\cap G$ for all $0\leq k\leq d$,
and respectively $G=\sat(\eval A(\gamma),U,V)$ or $G=\opt(\eval A(\gamma))$
have same eigenvalues.
So the assertion of Point~\eqref{th-1-finite} of  Theorem~\ref{th-1} (resp.\ 
Theorem~\ref{th-1-gen}) for $G=\sat(\eval A(\gamma),U,V)$ 
is equivalent to the one for $G=\opt(\eval A(\gamma))$,
which finishes the proof of this point.

\def\cprime{$'$}

\end{document}